\documentclass[11pt,reqno]{amsart}
\usepackage[utf8]{inputenc}
\usepackage[T1]{fontenc}
\usepackage{lmodern}
\usepackage[english]{babel}
\usepackage{amsmath,a4wide}
\usepackage{xfrac}
\usepackage{esint}
\usepackage{stmaryrd,mathrsfs,bm,amsthm,mathtools,yfonts,amssymb,color,braket,booktabs,graphicx,graphics,amsfonts}
\usepackage{latexsym,microtype,indentfirst,hyperref}
\usepackage{xcolor}
\usepackage{courier}
\usepackage{constants}
\usepackage{tikz}

\newtheorem{theorem}{Theorem}[section]
\newtheorem{corollary}[theorem]{Corollary}

\newtheorem{lemma}[theorem]{Lemma}
\newtheorem{proposition}[theorem]{Proposition}

\theoremstyle{definition}
\newtheorem{definition}[theorem]{Definition}
\newtheorem{remark}[theorem]{Remark}

\numberwithin{equation}{section}
\numberwithin{subsection}{section}

\interfootnotelinepenalty=10000


\newcommand{\R}{\mathbb{R}} 


\newcommand{\spt}{\mathrm{spt}} 





\newcommand{\bG}{\mathbf{G}} 

\newcommand{\rC}{\mathrm{C}} 

\newcommand{\bc}{\mathbf{c}}

\newcommand{\ssubset}{\subset\joinrel\subset}

\newconstantfamily{eps}{
symbol=\varepsilon}
\newconstantfamily{con}{
symbol=c}
\newconstantfamily{Lam}{
symbol=\Lambda}


\title{End-time regularity theorem for Brakke flows}
\date{\today}

\author[S. Stuvard]{Salvatore Stuvard}
\address{Dipartimento di Matematica, Universit\`{a} degli Studi di Milano, Via Saldini 50, I-20133 Milano (MI), Italy}
\email{salvatore.stuvard@unimi.it}

\author[Y. Tonegawa]{Yoshihiro Tonegawa}
\address{Department of Mathematics, Tokyo Institute of Technology, 2-12-1 Ookayama, Meguro-ku, Tokyo 152-8551, Japan}
\email{tonegawa@math.titech.ac.jp}

\begin{document}

\begin{abstract}
For a general $k$-dimensional Brakke flow in
$\mathbb R^n$ locally close to a $k$-dimensional plane in the
sense of measure, it is proved that the flow is represented locally as a smooth graph over the plane with estimates on all the derivatives up to the end-time. 
Moreover, at any point in space-time
where the Gaussian density is close to $1$, the flow can be
extended smoothly as a mean curvature
flow up to that time in a neighborhood: this extends White's local regularity 
theorem to general Brakke flows. The 
regularity result is in fact obtained for more
general Brakke-like flows, driven by the mean curvature plus an additional forcing term in a dimensionally sharp 
integrability class or in a H\"{o}lder
class.
\end{abstract}

\maketitle

\section{Introduction} 
A family of $k$-dimensional surfaces $M_t\subset \mathbb R^n$ parameterized by time $t$
is a mean curvature flow
(abbreviated as MCF) if the normal velocity is equal to the mean curvature
vector of $M_t$. Given a smooth $k$-dimensional submanifold $M_0$, there exists a unique smooth MCF with initial datum $M_0$ until singularities such as vanishing or neck-pinching occur. To extend the flow beyond the time of singularity,
numerous notions of generalized solution to MCF have been proposed 
since the 1970's: we mention, among others, the viscosity solutions produced by the level set method \cite{CGG,ES}, BV solutions \cite{Luckhaus}, and varifold solutions \cite{Brakke,Ton1}.

In the present paper, we focus on the varifold solutions known as Brakke flows, proposed and studied in Brakke's pioneering work 
\cite{Brakke}. One of the main results of \cite{Brakke} is
the partial regularity theorem of Brakke flows \cite[6.12]{Brakke}, which states that
any unit density Brakke flow is a smooth MCF for a.e.~time almost everywhere. Since a time-independent Brakke flow is a stationary varifold, and since in that case the unit density hypothesis means that the multiplicity function is equal
to $1$, the result may be seen as the natural parabolic
counterpart of the well-known result established by Allard in \cite{Allard} in the context of stationary varifolds.
For Brakke's partial regularity theorem,
as in many similar problems, the key ingredient is the proof of a ``flatness
implies regularity'' type result, that is, an $\varepsilon$-regularity theorem.
This is referred to as Brakke's local regularity theorem \cite[6.11]{Brakke} in this context.
It states, roughly speaking, that if $\{M_t\}_{t\in(-\Lambda,\Lambda)}$ 
is a Brakke flow in a cylinder $$\rC_2:=\rC(\mathbb R^k\times\{0\},2):=\{(x,y)\in\mathbb R^k\times\mathbb R^{n-k}:|x|<2\}$$ which is close to $$B_2^k:=\{(x,0)\in\mathbb R^k\times\mathbb R^{n-k}:
|x|<2\}$$
in the sense of 
measure over $t\in(-\Lambda,\Lambda)$, then, in the smaller cylinder $\rC_1$, $M_t$ coincides with a smooth graph over $B_{1}^k$ evolving by MCF for $t\in(-\Lambda/2,\Lambda/2)$, with estimates on all the derivatives of such graph
in terms of the overall height of $M_t$. The constant $\Lambda$ depends on how close $M_t$ is to $B_2^k$ in measure. While the original proof of Brakke's local regularity theorem contained various gaps and errors, 
a rigorous proof was provided in \cite{Kasai-Tone,Ton-2} with a different approach than Brakke's, and for more general flows, allowing for an additive perturbation in the form of a forcing term in the right-hand side of the underlying PDE.

Though this local regularity theorem is useful to prove the partial regularity of 
Brakke flows, there is a drawback in that it does not
provide the regularity of the flow up until the ``end-time''. Since the problem is parabolic in nature, one would expect the validity of interior estimates away from the ``parabolic boundary'' of $B_2^k \times (-\Lambda,\Lambda)$, and thus that the graphical representation over $B_1^k$ together with the corresponding estimates on the derivatives hold for $t \in (-\Lambda/2,\Lambda)$ instead of $(-\Lambda/2,\Lambda/2)$.

The present paper addresses precisely this problem, and 
proves that such estimates are possible for Brakke flows, even when the aforementioned forcing term is present. There are many more-or-less equivalent ways of stating the 
main regularity theorem proved here: an illustrative form is the following, where, for convenience, we discuss the simple case of Brakke flows with no forcing term
and we change 
the time interval from $(-\Lambda,\Lambda)$ to $[-2,0]$. For the sake of accuracy, the statement uses the varifold notation $V_t$ (see \cite{Allard,Kasai-Tone}), but the reader may think of
the support of the weight measure ${\rm spt}\|V_t\|$ as $M_t$. 

\begin{theorem}\label{main1}
Corresponding to $E_0 \in \left(0,\infty\right)$, there exists $\varepsilon_0=
\varepsilon_0(n,k,E_0)\in(0,1)$ with the following property. Suppose
$\{V_t\}_{t\in(-2,0]}$ is a $k$-dimensional unit density Brakke flow in the cylinder $\rC_3 = \rC(\R^k \times \{0\},3)
\subset\mathbb R^n$ satisfying: 
\newline
\indent
(1) $\sup_{t\in(-2,0]}\|V_t\|(\rC_3)\leq E_0$ ;
\newline
\indent (2) $\|V_{-4/5}\|(\rC_1)\leq \frac54\,\omega_k\,,$  ($\omega_k=
        \mbox{volume of }B_1^k$);  
        \newline
\indent        (3)
        $0\in {\rm spt}\|V_0\|$;
\newline
\indent        (4) $\cup_{t\in[-1,0]}\,{\rm spt}\|V_t\|\subset \{(x,y)\in\mathbb R^k\times\mathbb R^{n-k}:|y|\leq\varepsilon\}$ for some $\varepsilon\in(0, \varepsilon_0]$.
\newline
Then, for every $t \in \left[-1/4,0\right)$,  $\rC_{1/2}\cap{\rm spt}\|V_t\|$ is a $C^\infty$ graph over $B_{1/2}^k$ evolving by MCF, and
the space-time $C^\ell$-norm of the graph on $B_{1/2}^k\times[-1/4,0)$
is bounded by $c(\ell,n,k,E_0)\varepsilon$ for any $\ell\geq 1$. 
\end{theorem}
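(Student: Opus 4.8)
\medskip
\noindent\textbf{Proof strategy.}
The guiding principle is that $t=0$ is, from the parabolic viewpoint, a point \emph{away} from the parabolic boundary of $\rC_2\times(-1,0]$, so interior-type estimates should reach it; the plan is to carry the known \emph{interior} regularity (Brakke's local regularity theorem as proved in \cite{Kasai-Tone,Ton-2}) up to $t=0$ by a continuation argument, exploiting that hypothesis (4) confines the flow to the slab $\{|y|\le\eps\}$ all the way to the end-time. I would first choose $\eps_0=\eps_0(n,k,E_0)$ so small that hypotheses (1)--(4) force the flow, via Huisken's localized monotonicity formula and the clearing-out lemma, into the unit-density single-sheet regime on a definite space-time region: precisely, so that $\|V_t\|$ is as close as needed, in the sense of Radon measures, to $\Ha^k\mres B_2^k$ for every $t\in(-1,0)$, with the closeness controlled quantitatively by $\eps$. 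Here (2) gives the upper mass bound ruling out extra sheets, (3) together with backward monotonicity gives the matching lower density bound near $(0,0)$, propagated in space by the clearing-out lemma, and (4) keeps the support in the slab. Applying the interior regularity theorem on the time window $(-1,0)$ (after translating the time variable to recenter the interval) then shows that $\spt\|V_t\|\cap\rC_1$ is, for every $t\in(-3/4,-1/4)$, a smooth graph over $B_1^k$ evolving by mean curvature flow, with all derivatives bounded on compact subsets by $c(\ell,n,k,E_0)\,\eps$; this is the ``warm start'', and it reaches the threshold $t=-1/4$ but not beyond --- exactly the region the theorem is about.

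Fix $t_\ast=-1/2$, where the flow is a smooth graph $u_\ast$ over $B_1^k$ with $\|u_\ast\|_{C^\ell}\le c(\ell,n,k,E_0)\,\eps$, let $\tilde u$ be the classical graphical mean curvature flow with $\tilde u(\cdot,t_\ast)=u_\ast$, and run a maximal-interval argument. Let $T$ be the supremum of the times $\tau\in[t_\ast,0]$ such that, on $[t_\ast,\tau]$ --- and on a spatial cylinder whose radius decreases from $1$ at $t_\ast$ to, say, $1/2$ at $t=0$, this bookkeeping of a bounded, once-for-all shrinkage being routine --- $\spt\|V_t\|$ coincides with the graph of $\tilde u(\cdot,t)$ and $|\nabla\tilde u|\le\tfrac12$; by the warm start $T\ge-1/4>t_\ast$. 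Suppose $T<0$. On $[t_\ast,T)$ the function $\tilde u$ solves the uniformly parabolic graphical mean curvature equation, and interior parabolic estimates --- valid up to the final-time slice of the cylinder, with no shrinkage needed there, and relying near $t_\ast$ on the smoothness of $u_\ast$ --- bound the $C^\ell$-norm of $\tilde u$ by $c(\ell,n,k,E_0)\,\eps$, the only a priori input being $\|\tilde u\|_{C^0}\le\eps$ from (4); together with the monotonicity formula and the unit-density hypothesis, which preclude both a sudden loss of mass and the appearance of new mass as $t\uparrow T$, this lets $\tilde u$ extend smoothly to $t=T$ with $\spt\|V_T\|$ still equal to its graph and $|\nabla\tilde u(\cdot,T)|\ll1$. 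Now the uniqueness and short-time regularity theory for unit-density Brakke flows with smooth initial datum --- localized by using (4) and (1) to exclude influx through the lateral boundary --- shows that, emanating from the smooth graph $\tilde u(\cdot,T)$ at time $T$, the flow agrees with the classical graphical mean curvature flow on a further time interval of length $\sigma=\sigma(n,k,E_0)>0$ \emph{independent of $T$} (the flow staying graphical because the datum is $C^2$-small). This pushes the graphical representation past $T$ --- here it is essential that $T<0$, so that $[T,T+\sigma]$ still lies in the interval on which $\{V_t\}$ is defined --- contradicting the maximality of $T$. Hence $T=0$, and the bound $\|\tilde u\|_{C^\ell(B_{1/2}^k\times[-1/4,0))}\le c(\ell,n,k,E_0)\,\eps$ is precisely the assertion.

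The step I expect to be the main obstacle is the uniqueness/continuation at time $T$: one must simultaneously rule out a downward jump of $\|V_t\|$ there (handled by monotonicity and unit density) and rule out mass entering the cylinder from outside (handled by the confinement (4) and the area bound (1)), and one must obtain an extension time $\sigma$ bounded below as $T\uparrow0$. It is here that assumption (4) holding up to $t=0$ is indispensable: it supplies the a priori $C^0$-smallness that lets the interior parabolic estimates of the ``closed'' step be propagated to the final-time slice, whereas the parabolic rescaling of a small \emph{forward} space-time neighborhood of $(x_0,0)$, which one uses freely at interior times, degenerates. (Alternatively one may bypass the continuation and re-prove the height-excess decay and the ensuing $C^{1,\alpha}$-bootstrap directly at points $(x_0,t_0)$ with $t_0\in[-1/4,0)$ using only \emph{backward} parabolic cylinders and backward-in-time Schauder estimates --- all the ingredients, Huisken's monotonicity included, depend only on the flow at times $\le t_0$ --- but the scheme above makes the structural role of hypothesis (4) most transparent.) Finally, for the general Brakke-like flows with an additional forcing term, the same scheme applies once Huisken's monotonicity formula is replaced by its version carrying a forcing error term --- which is genuinely controlled, and small at small scales, exactly under the dimensionally sharp integrability, respectively H\"older, assumption --- and the quasilinear Schauder estimates of the ``closed'' step are replaced by parabolic $L^p$ (Calder\'on--Zygmund) estimates; the regularity then obtained for the graph matches that of the forcing, which is why the conclusion is $C^\infty$ only in the unforced case of Theorem~\ref{main1}.
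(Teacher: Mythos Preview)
Your continuation scheme has a genuine gap at the step you yourself flag as the main obstacle: the ``uniqueness and short-time regularity theory for unit-density Brakke flows with smooth initial datum'' that you invoke to push the graph past time $T$ is not available for general Brakke flows in a local setting. A Brakke flow is only constrained by an \emph{inequality}, and no forward-uniqueness statement of the kind you need (local, with $\sigma$ bounded below independently of $T$) is known without either assuming the flow is a limit of smooth flows (White's hypothesis, which the paper explicitly wants to avoid) or essentially reproving the end-time $\varepsilon$-regularity you are after. If instead you try to manufacture the extension by applying the \emph{interior} Kasai--Tone theorem on a two-sided window $[T-\delta,T+\delta]\subset(-1,0]$, you are forced to take $\delta\le |T|$, so the extension is only by $c|T|$; iterating gives $T_n\to 0$ but never a uniform step, and the $C^1$ bound furnished at time $t$ by that application scales like $\mu_{\sqrt{|t|}}\sim \varepsilon/\sqrt{|t|}$, which blows up. Your claim that the once-for-all spatial shrinkage is ``routine bookkeeping'' hides exactly this: infinitely many applications of the interior theorem with shrinking scale do not assemble into a uniform estimate on $B_{1/2}^k\times[-1/4,0)$. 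Separately, the smooth comparison flow $\tilde u$ is only posed on a bounded spatial domain with no boundary data, so even its classical continuation is not immediate.

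The paper takes precisely the route you mention parenthetically and then set aside: it redoes the excess-decay and blow-up argument using only \emph{backward} parabolic cylinders $\tilde P_r(a,s)$ centered at the end-time. The point is that this is not just ``backward Schauder'': the original Kasai--Tone argument needs a waiting time near \emph{both} ends of the interval, coming from an ODE argument for the mass excess $\|V_t\|(\phi_T^2)-\mathbf{c}$. The paper's new ingredient (Theorem~\ref{t:mass_growth_control}) shows that this waiting time near $t=0$ can be taken of order $\tau$ at the cost of a constant $K/\tau^3$ in the energy estimate; in the blow-up one lets $\tau\to 0$ together with the height, and the contribution from the thin strip $(-\tau,0)$ is controlled by the uniform $L^\infty$ height bound. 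This yields decay of the tilt-excess at scales $\theta^j$ centered at $(0,0)$ (Proposition~\ref{blow-est}), hence $C^{1,\zeta}$ graphicality on the backward paraboloid $\{|x|^2<|t|\}$ (Proposition~\ref{p:las}); a second layer of iteration (Section~\ref{PMR}) then opens the paraboloid to cover $B_{1/2}^k\times[-1/4,0)$, with estimates uniform in $t$. Theorem~\ref{main1} itself is deduced in a few lines from Theorem~\ref{main3} once these ingredients are in place.
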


Any Brakke flow locally satisfies the assumption (1) for some $E_0>0$. The assumption (2)
excludes the case of two parallel $k$-dimensional
planes, which is not a univalent graph, while (3)
excludes the sudden vanishing of Brakke flow 
before the end-time $t=0$. Since the definition of Brakke flow
allows such irregularity, (3) (or some variant of
similar nature) is necessary. The last (4) assumes
that the height is kept small for $t\in[-1,0]$. The conclusion is that the 
Brakke flow is a smooth graph away from the parabolic 
boundary, and all derivatives can be controlled in 
terms of the height. Note that ${\rm spt}\|V_0\|$
may not be a smooth surface due to a possible (partial)
sudden vanishing at $t=0$, but we can smoothly
extend ${\rm spt}\|V_t\|$ as $t\rightarrow 0-$
in $\rC_{1/2}$ due to the estimates. 
As anticipated, the main result of the present paper 
is in fact more general. Precisely, the assumptions on the flow can be relaxed in various ways. First, the unit density assumption can be entirely dropped, and the theorem can be stated requiring $\{V_t\}_{t \in (-2,0]}$ to be a $k$-dimensional \emph{integral} Brakke flow in $\mathrm{C}_3$, instead. The reason is that assumption (2) prevents the presence of higher multiplicity points in a slightly smaller parabolic region, as one can see using Huisken's monotonicity formula and a compactness argument, so that any $k$-dimensional integral Brakke flow satisfying (1)-(4) for sufficiently small $\varepsilon_0$ is necessarily unit density in a smaller parabolic region. Second, assumption (4) on the smallness of the height can be phrased in a weaker measure-theoretic sense: for the result to be valid, it is in fact sufficient that the (space-time) $L^2$-distance of the flow from the plane $\R^k\times\{0\}$ in $\rC_1 \times [-1,0]$ (a quantity typically referred to as ($L^2$-)\emph{excess}) is sufficiently small. Furthermore, the regularity result proved here is in fact valid for the larger class of Brakke flows with forcing term; more precisely, in this case we obtain $C^{1,\zeta}$ 
($\zeta=1-k/p-2/q$) or $C^{2,\alpha}$ regularity estimates depending on whether the forcing is in the $L^{p,q}$-integrability class or in the
$\alpha$-H\"{o}lder class, respectively. There are several reasons, stemming both from theoretical considerations and from the applications, leading one to consider Brakke-like flows with additional forcing term. A major one is the study of Brakke flows on a Riemannian manifold $M$: once $M$ is (isometrically) embedded into some Euclidean space $\mathbb R^N$, the extrinsic curvatures of the immersion act as a forcing term in the corresponding definition of Brakke flow in $M$; see Section \ref{notation} for further details on this, and Theorems \ref{main2} and \ref{main3} for the precise statements of the main results.\\

We next discuss some related works. When the Brakke flow in Theorem \ref{main1} is a smooth
MCF or is obtained as a weak limit of smooth MCF, 
the result has been known as a part of White's local 
regularity theorem from \cite{Wh2}, and 
it has been used widely in the literature of MCF to 
analyze the nature of singularities. White's theorem applies, for instance, to Brakke flows obtained by the elliptic regularization method of 
Ilmanen \cite{Ilm1}, and, since the class of such MCF is weakly compact (see \cite[Section 7]{Wh2}), to their tangent flows. The present paper
shows that the same conclusions of White's theorem 
in various forms hold true even
without the proviso of approximability by
smooth MCF, and can be derived solely
from the definition of Brakke flow. As an 
illustration, using the main regularity theorem, we can prove
the following. 
\begin{theorem}\label{main1cor}
There exists $\Cl[eps]{ewhite}=\Cr{ewhite}(n,k)\in(0,1)$ 
with the following property. Let $\{V_t\}_{t\in (a,b]}$ be a
$k$-dimensional Brakke flow in a domain 
$U\subset \mathbb R^n$ (or an $n$-dimensional
Riemannian manifold). For any point $(x,t)\in U\times (a,b]$ with
the Gaussian density $\Theta(x,t)\in[1,1+\Cr{ewhite})$ (see Subsection \ref{app2}),
there exists $r>0$ such that $B_r(x)\cap {\rm spt}\|V_s\|$
is a smooth MCF in $B_r(x)$ 
for $s\in(t-r^2,t)$ and 
can be extended smoothly to $t$ in the limit as $s\rightarrow t-$. 
\end{theorem}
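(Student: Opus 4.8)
The plan is to reduce the statement, by a translation, a parabolic dilation and a rotation, to a single application of Theorem \ref{main1} — more precisely of its integral, $L^2$-excess variant described right after that statement — and, in the Riemannian case, of Theorems \ref{main2}--\ref{main3}. Assume first that the ambient space is $\R^n$ with no forcing term and, after translating, that $(x,t)=(0,0)$, so the relevant part of the flow is $\{V_s\}_{s\le 0}$ with end-time $0$. By Huisken's monotonicity formula the Gaussian density ratios $\Theta((x',t'),s)$ are non-decreasing in $s$, and the Gaussian density $\Theta(\cdot,\cdot)$ is upper semicontinuous; hence, given $\eta>0$, one picks $\rho_1>0$ with $\Theta((0,0),\rho_1)<\Theta(0,0)+\eta/2$ and then $\rho\in(0,\rho_1)$ so small that $\Theta((x',t'),s)<\Theta(0,0)+\eta\le 1+\Cr{ewhite}+\eta$ whenever $|x'|^2+|t'|<\rho^2$ and $0<s\le\rho_1$. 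A parabolic dilation by a fixed multiple of $1/\rho$ centered at $(0,0)$ then produces a $k$-dimensional Brakke flow, still denoted $\{V_s\}$, defined on (a neighborhood of) $\rC_3\times(-2,0]$, with $0\in\spt\|V_0\|$ and
\begin{equation}\label{eq:denW}
\Theta((x',t'),s)<1+\Cr{ewhite}+\eta\qquad\text{for all }(x',t')\in\rC_3\times[-2,0]\text{ and all }0<s\le1 .
\end{equation}
Using the heat-kernel lower bound on balls and a covering argument, \eqref{eq:denW} forces $\sup_t\|V_t\|(\rC_3)\le E_0$ for an absolute constant $E_0=E_0(n,k)$ once $\Cr{ewhite},\eta\le 1$. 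It remains to check the remaining hypotheses of Theorem \ref{main1}.

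The crucial step is to show: for every $\delta>0$ there are $\Cr{ewhite}=\Cr{ewhite}(n,k,\delta)$ and a choice of $\eta$ such that any Brakke flow satisfying \eqref{eq:denW} is, after a rotation fixing $0$, within space-time $L^2$-excess $\delta$ of $\R^k\times\{0\}$ over $\rC_1\times[-1,0]$ and also satisfies $\|V_{-4/5}\|(\rC_1)\le\frac54\,\omega_k$. I would argue by contradiction and compactness. Were this false, there would be flows $\{V^j_s\}$ obeying \eqref{eq:denW} with $\Cr{ewhite}=\eta=1/j$ but violating the conclusion for every rotation; by the uniform mass bound and Ilmanen's compactness theorem for Brakke flows, a subsequence converges to a Brakke flow $\hat V$. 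Passing to the limit in \eqref{eq:denW}, using continuity of the heat-kernel weighted masses at fixed scale and upper semicontinuity of $\Theta$ along the sequence, gives $\Theta_{\hat V}((x',t'),s)\le 1$ for all relevant $(x',t')$ and $0<s\le1$; since $0\in\spt\|V^j_0\|$ forces $\Theta_{V^j}((0,0),s)\ge1$, one also gets $\Theta_{\hat V}((0,0),s)\ge1$, so $\hat V$ is non-trivial and, by the density lower bound for integral Brakke flows, $\Theta_{\hat V}((p,\tau),s)=1$ at every $(p,\tau)\in\spt\|\hat V\|$ and all $0<s\le1$. The equality case of Huisken's monotonicity formula then forces the identity $H=\tfrac{(z-p)^\perp}{2(\tau-t)}$ about every base point $(p,\tau)\in\spt\|\hat V\|$; comparing it for two base points at the same time shows $\spt\|\hat V_t\|$ is locally invariant under translations along differences of its own points, hence is a $k$-plane $P$, necessarily through $0$ and with $H\equiv0$, i.e.\ $\hat V$ is the static multiplicity-one plane $P$. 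Rotating so that $P=\R^k\times\{0\}$, the $L^2$-excess of $\hat V$ vanishes and $\|\hat V_{-4/5}\|(\rC_1)=\omega_k$, so by weak-$*$ convergence of the space-time measures (localized on compact subsets) the corresponding quantities for $V^j$ converge to $0$ and to $\omega_k$, contradicting the failure of the conclusion.

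With this in hand, fix $\delta$ — hence $\Cr{ewhite}$ and $\eta$ — below the threshold $\varepsilon_0(n,k,E_0)$ of Theorem \ref{main1} for the absolute $E_0$ above. By the previous two paragraphs the rescaled flow satisfies the hypotheses of the integral, $L^2$-excess form of Theorem \ref{main1}: (1) with $E_0$; (2) by the previous step (indeed a second sheet over $B_1^k$ near $t=-4/5$ would push the scale-$\simeq 1$ Gaussian density above $1+\Cr{ewhite}$, contradicting \eqref{eq:denW}); (3) is $0\in\spt\|V_0\|$; (4) is the excess bound. Hence $\rC_{1/2}\cap\spt\|V_t\|$ is a smooth graph over $B_{1/2}^k$ evolving by MCF for $t\in[-1/4,0)$, with all space-time derivatives bounded, extending smoothly to $t=0$; undoing the dilation and translation yields $r>0$ (a fixed multiple of $\rho$) such that $B_r(x)\cap\spt\|V_s\|$ is a smooth MCF in $B_r(x)$ for $s\in(t-r^2,t)$ with a smooth limit as $s\to t-$, which is the assertion. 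In the Riemannian case one isometrically embeds the ambient manifold into some $\R^N$, turning the flow into a Brakke flow in $\R^N$ with a smooth, hence bounded, forcing term (the ambient second fundamental form); the rescaled forcing terms tend to $0$, so the compactness step is unchanged and the limit is still a static plane, and one concludes via Theorems \ref{main2}--\ref{main3} in place of Theorem \ref{main1}.

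The main obstacle is the compactness step: one must secure the uniform local mass bounds needed to apply Brakke compactness, verify that the Gaussian density behaves upper semicontinuously along the converging flows, and extract from the rigidity of Huisken's monotonicity formula that the limit is a static plane. The restriction $0<s\le1$ in \eqref{eq:denW}, inherited from the monotonicity formula at the original scale, is precisely what excludes non-flat limits such as shrinking spheres. Once these are in place, matching the remaining constants ($E_0$ with the mass bound, $\delta$ with $\varepsilon_0$, the range of admissible scales) is routine.
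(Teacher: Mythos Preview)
Your approach is correct and shares the same skeleton as the paper's: both establish closeness to a plane via Brakke-flow compactness and the density-one rigidity of Huisken's monotonicity formula, then invoke the end-time $\varepsilon$-regularity theorem. The organization differs, however. The paper first passes to a tangent flow at $(x_0,t_0)$ and proceeds in two steps: when $\Theta=1$ exactly, the tangent flow itself has density $\equiv 1$ everywhere and is therefore a static plane (citing \cite[Theorem~8.1]{White_stratification}), so Theorem~\ref{main3} applies at some scale; for $\Theta<1+\Cr{ewhite}$, the paper runs a compactness argument \emph{on the class of tangent flows}, whose self-shrinker structure (density ratio constant in $\tau$) makes the limit identification especially clean, and then uses that a smooth self-shrinker near the origin must be a plane. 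You bypass tangent flows entirely, rescaling the original flow once and running compactness directly on the rescaled flows in a fixed parabolic cylinder; the static plane is then extracted from local rigidity of Huisken's formula. Your route is more self-contained but requires a bit more care: Brakke compactness only gives convergence of $\|V^j_t\|$ for a.e.\ $t$, so checking the single-time mass bound \eqref{udef1} at $t=-4R^2/5$ needs the almost-monotonicity of $t\mapsto\|V_t\|(\phi_T^2)$, and the rigidity argument must be carried out on a bounded domain. One small correction: $\Theta((0,0))\ge 1$ guarantees $(0,0)\in\spt(\|V_t\|\times dt)$ but not literally $0\in\spt\|V_0\|$; this is harmless since Theorem~\ref{main2} only requires the space-time support condition \eqref{udef2}.
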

\noindent

We remark that there are, in the literature, existence theorems of Brakke flows
for which one cannot tell {\it a priori} whether they arise as weak
limits of smooth MCF or not. The examples include the limits of solutions to the Allen-Cahn equation
\cite{Ilm_AC,Ton3,Takasao-Tonegawa} as well as the flows obtained by means of time-discrete approximate schemes \cite{Brakke,KimTone,ST19,ST22}.   
In the case of Brakke flows with no forcing term, 
Lahiri \cite{Lahiri} showed an analogous end-time $C^{1,\zeta}$
regularity result using some height growth estimates, a suitable 
constancy theorem for integral varifolds, and 
higher order derivative estimates. The proof is very different from that
of the present paper, and it appears difficult to generalize it
to flows with forcing term. More recently, Gasparetto \cite{gas}
showed the validity of a similar end-time $C^{1,\zeta}$ regularity result for Brakke flows with boundary, with a proof based on viscosity techniques. About six months after the present manuscript was made available as a preprint, De Philippis, Gasparetto, and Schulze provided in \cite{DGS} an alternative proof -- again based on viscosity techniques -- of the end-time regularity result in the interior for Brakke flows possibly with forcing term in $L^\infty$.\\

Next, we describe the idea of the proof. The proof of Theorem \ref{main1} is achieved by modifying suitable
portions of the proof of the local regularity theorem in \cite{Kasai-Tone}, so to extend the graphicality and the relevant estimates up to the end-time. Just 
as in many similar problems of this type, a fundamental step towards regularity is the proof of a Caccioppoli-type estimate stating that a certain ``Dirichlet-type energy'' can be controlled in terms of the $L^2$-height of the solution. In the context of Brakke flows, such Dirichlet type energy corresponds, roughly speaking, to the difference (excess) of surface measure of $\|V_t\|$ within the cylinder $\rC_1$ and the measure $\omega_k$ of the unit disk. Such difference is shown to be less than a constant times the $L^2$-height of the flow by means of an ODE argument, see \cite[Section 5]{Kasai-Tone}: indeed, 
one proves,
by appropriately testing Brakke's inequality, that the excess of measure -- as a function of time -- satisfies an ordinary differential inequality. The ODE argument implemented in \cite{Kasai-Tone}, though, requires some ``waiting time'' both near the beginning and the end of the time interval: this is the main reason for the lack of estimates up to the end-time in \cite{Kasai-Tone}. A key point of the present paper is the observation that such waiting time becomes
shorter when the height of the Brakke flow is smaller. 
The proof of the regularity then proceeds just like in Allard's regularity theorem: the Brakke flow is approximated by a (parabolic) Lipschitz function, and one initiates a blow-up argument. The approximating Lipschitz functions are rescaled by the height of the Brakke flow, but, thanks to the above mentioned key observation, in the process of passing to the limit as the height goes to $0$, also the waiting time becomes infinitesimal. One can then show that the rescaled Lipschitz functions converge strongly in $L^2$ 
to a solution of the heat equation as long as small neighborhoods of $t=-1$ and $t=0$ are removed. The contribution to the $L^2$-norms of the rescaled functions coming from the neighborhood
of $t=0$ can be made small, so that, in combination with the linear 
regularity theory of the heat equation, one obtains decay estimates for the linearized problem. By iterating, one concludes $C^{1,\zeta}$ regularity and graphical representation of $\|V_t\|$ on a parabolic region of space-time which touches the origin. In particular, 
any point on the boundary of this parabolic region is in the support of $\|V_t\|$, so that one can repeat the same argument regarding these
points as the origin. This implies that the domain of graphicality with estimates can be extended so that it covers the whole 
support of $\|V_t\|$ in $\rC_{\sfrac12}\times[-1/4,0)$, 
proving the $C^{1,\zeta}$ estimate up to the end-time. 
Once this is done, $C^{2,\alpha}$ regularity up to the end-time can be obtained by repeating -- with essentially no changes -- the proof in \cite{Ton-2}. 
Once the $C^{2,\alpha}$ end-time regularity is available, the classical 
parabolic regularity theory gives all the higher derivative 
estimates for the Brakke flow with no forcing term, while the regularity theory for inhomogeneous linear heat equation implies the result when the forcing term is present.\\ 

The paper is organized as follows. In Section \ref{notation} we set up the notation in use throughout the paper, and we provide the formal statements of the main results in their full generality (see Theorem \ref{main2} and Theorem \ref{main3}) as well as the proofs of Theorems \ref{main1} and \ref{main1cor} as a consequence of the general main results. Section \ref{s:en_est} contains the enhanced ODE argument which gives energy estimates with short waiting time at the end of the time interval. In Section \ref{s:Lip_approx} we produce a parabolic Lipschitz approximation of the flow with good estimates up to the end-time, by suitably modifying the corresponding construction in \cite[Section 7]{Kasai-Tone}. In Section \ref{buarg},
the main modification of the blow-up 
argument is described and the main $C^{1,\zeta}$ 
regularity on a parabolic domain touching
the origin (a subdomain of $\{(x,t): |x|^2<|t|\}$) is obtained. In Section \ref{PMR}, we complete the proof of Theorem \ref{main2} and Theorem \ref{main3}.

\medskip

\noindent\textbf{Acknowledgements.} The second author dedicates the present paper to his mother Suzuko Tonegawa. The work was carried out during the sabbatical year of the second author at the University of Milan. 

The research of S.S. was supported by the project PRIN 2022PJ9EFL "\textit{Geometric Measure Theory: Structure of Singular Measures, Regularity Theory and Applications in the Calculus of Variations}", funded by the European Union under NextGenerationEU and by the Italian Ministry of University and Research, and by the Gruppo Nazionale per l'Analisi Matematica, la Probabilit\`a e le loro Applicazioni of INdAM. Y.T. was partially supported by JSPS 18H03670, 19H00639.

\section{Assumptions and main results}\label{notation}
\subsection{Notation}
Since the proof follows \cite{Kasai-Tone}
very closely,
we mostly adopt the same notation (see \cite[Section 2]{Kasai-Tone}), except for a few symbols of norms. Throughout $1\leq k<n$ are fixed, and the dependence of constants on $n$ and $k$ is often not specified for simplicity. We set $\mathbb R^+:=\{x \in \R\,:\,x\geq 0\}$. 
For $r\in(0,\infty)$ and $a\in\mathbb R^n$ (or $a\in\mathbb R^k$) we set
\begin{equation*}
    B_r(a):=\{x\in\mathbb R^n : |x-a|<r\},\,\,
    B_r^k(a):=\{x\in\mathbb R^k : |x-a|<r\}\,,
\end{equation*}
and we often identify $\mathbb R^k$ with 
$\mathbb R^k\times\{0\}\subset\mathbb R^n$. 
When $a=0$, we may write $B_r$ and $B_r^k$. 
For $a\in\mathbb R^n$, $s\in\mathbb R$ and $r>0$ we define two types of parabolic cylinders
\begin{equation}\label{defpara}
\begin{split}
   & P_r(a,s):=\{(x,t)\in\mathbb R^n\times\mathbb R: |x-a|<r,\, |t-s|<r^2\}\,, \\
   & \tilde P_r(a,s):=\{(x,t)\in\mathbb R^n\times\mathbb R:|x-a|<r,\, s-r^2<t<s\}\,;
\end{split}
\end{equation}
the first one was used in \cite{Kasai-Tone}, whereas in the present paper we will prefer to work with the second one. We denote by $\mathcal L^n$ the Lebesgue measure 
on $\mathbb R^n$ and by $\mathcal H^k$ the $k$-dimensional Hausdorff measure on $\mathbb R^n$.
The restriction of a measure to a (measurable) set $A$ is 
expressed by $\lfloor_A$. For an open set
$U\subset\mathbb R^n$, $C_c(U)$ is the set of continuous and
compactly supported functions defined on $U$, and 
$C_c^k(U)$ is the set of $k$-times continuously differentiable
functions with compact support in $U$. The symbols
$\nabla f$ and $\nabla^2 f$ always denote the spatial gradient and Hessian of 
$f$, respectively, and $f_t=\partial_t f$ is the time derivative of $f$. For a function $f$
defined on a domain in space-time $D\subset\mathbb R^n\times\mathbb R$ and $\alpha\in(0,1)$, define
the following (semi-)norms to ease the notation in
\cite{Kasai-Tone,Ton-2}: 
\begin{equation*}
    \|f\|_0:=\|f\|_{L^\infty(D)}\,,
\end{equation*}
\begin{equation*}
    [f]_{\alpha}:=\sup \left\lbrace
    \frac{|f(y_1,s_1)-f(y_2,s_2)|}{\max\{|y_1-y_2|,|s_1-s_2|^{\frac12}\}^\alpha} \, \colon \, (y_1,s_1),(y_2,s_2)\in D\,, \;  (y_1,s_1)\neq(y_2,s_2) \right\rbrace\,,
\end{equation*}
\begin{equation*}
    [f]_{1+\alpha}:=[\nabla f]_{\alpha}+\sup\left\lbrace\frac{|f(y,s_1)-f(y,s_2)|}{|s_1-s_2|^{\frac{1+\alpha}{2}}} \, \colon \, (y,s_1),(y,s_2)\in  D\,,\; s_1 \neq s_2 \right\rbrace\,.
\end{equation*}

Let ${\bf G}(n,k)$ be the space of $k$-dimensional linear subspaces of $\mathbb R^n$  and let ${\bf A}(n,k)$
be the space of $k$-dimensional affine planes in $\mathbb R^n$. For $S\in{\bf G}(n,k)$, we identify $S$ with the corresponding orthogonal 
projection matrix of 
$\mathbb R^n$ onto $S$. Let $S^{\perp}\in{\bf G}(n,n-k)$ be the orthogonal complement of $S$.
For $A\in{\rm Hom}(\mathbb R^n;\mathbb R^n)$, we define
the operator norm 
\begin{equation*}
    \|A\|:=\sup\{|A(x)|:x\in\mathbb R^n, \, |x|=1\}\,, 
\end{equation*}
and we often use this as a metric on ${\bf G}(n,k)$.
For $T\in{\bf G}(n,k)$, $a \in T$, and 
$r\in(0,\infty)$ we define the cylinder
\begin{equation*}
    \rC(T,a,r):=\{x\in\mathbb R^n:|T(x-a)|<r\}\,.
\end{equation*}
A general $k$-varifold on  
$U\subset \mathbb R^n$ is a 
Radon measure defined on
$G_k(U):=U\times{\bf G}(n,k)$ (see \cite{Allard,Simon} for a more comprehensive introduction), and the set of 
all general $k$-varifolds in $U$ is denoted by ${\bf V}_k(U)$. 
For $V\in{\bf V}_k(U)$, let $\|V\|$ be the weight 
measure of $V$
(with no fear of confusion with the operator norm), that is the measure defined on $U$ by
\begin{equation*}
    \|V\|(\phi):=\int_{G_k(U)}\phi(x)\,dV(x,S) \qquad \mbox{for every $\phi \in C_c(U)$}\,.
\end{equation*}

For a proper map $f\in C^1(\mathbb R^n;\mathbb R^n)$, the symbol $f_{\sharp} V$ denotes the push-forward of the varifold $V$ through $f$. We say that
$V\in{\bf V}_k(U)$ is a rectifiable varifold if 
there are some $\mathcal H^k$-measurable and countably $k$-rectifiable set $M\subset\mathbb R^n$ as well as a non-negative function $\theta \in L^1_{{\rm loc}}(\mathcal{H}^k\lfloor_M)$ such that
\begin{equation*}
V(\phi)=\int_{M}\phi(x,{\rm Tan}_x M)\,\theta(x)\,d\mathcal H^k(x) \qquad \mbox{for all $\phi\in C_c(G_k(U))$}\,,
\end{equation*}
and in such case we write $V = {\bf var}(M,\theta)$. Here, ${\rm Tan}_x M$ is
the approximate tangent space to $M$ at $x$, which exists for $\mathcal H^k$-a.e. $x\in M$. When $\theta(x)$
is integer-valued for $\mathcal H^n$-a.e.\,$x\in M$, $V$ is said to be an integral varifold. 
The set of all integral varifolds is
denoted by ${\bf IV}_k(U)$. When $\theta=1$ additionally, we say that $V$ is of unit density. 
For $V\in {\bf V}_k(U)$, $\delta V$ denotes the first variation of $V$ and $\|\delta V\|$ denotes the total variation of $\delta V$. When $\delta V$ is bounded and absolutely
continuous with respect to $\|V\|$, the Radon-Nikodym derivative (times $-1$), $-\delta V/\|V\|$, is denoted
by $h(V,\cdot)$ and is called the generalized mean curvature vector
of $V$. A fundamental geometric property of integral varifolds, of great importance for the analysis of Brakke flows, is Brakke's perpendicularity theorem \cite[Chapter 5]{Brakke}: if $V\in{\bf IV}_k(U)$ and
$h(V,\cdot)$ exists, then $S(h(V,x))=0$ for $V$-a.e. $(x,S)\in G_k(U)$.

For a one-parameter family of varifolds $\{V_t\}_{t\in[a,b]}$, we often use $\|V_t\|\times dt$ to represent the natural product measure
on $U\times[a,b]$; the latter is also expressed
as $d\|V_t\|dt$ within integration. 

Fix $\phi\in C^\infty([0,\infty))$ such that
$0\leq \phi\leq 1$,
\begin{equation}
    \phi(x)\left\{\begin{array}{ll}
    =1 & \mbox{ for }0\leq x\leq (2/3)^{1/k}, \\
    >0 & \mbox{ for }0\leq x<(5/6)^{1/k}, \\
    =0 & \mbox{ for }x\geq (5/6)^{1/k}.
    \end{array}\right.
\end{equation}
For $R\in(0,\infty)$, $x\in\mathbb R^n$ and $T\in{\bf G}(n,k)$, define
\begin{equation}
    \phi_{T,R}(x):=\phi(R^{-1}|T(x)|),\,\,
    \phi_T(x):=\phi_{T,1}(x)=\phi(|T(x)|)
\end{equation}
and set
\begin{equation}
    {\bf c}:=\int_T \phi_T^2(x)\,d\mathcal H^k(x).
\end{equation}
The functions $\phi_{T,R}$ and $\phi_T$ will be used as smooth 
test functions to gauge the measure deviation of $\|V\|$
away from
$T$ with 
multiplicity one. Notice that ${\bf c}$ is independent of $T$.

\subsection{Definition of Brakke flow}
Since in this paper we are mostly interested in the end-time regularity,
we consider time intervals of the form 
$[-\Lambda,0]$ with $\Lambda>0$ in the following. 
\begin{definition}\label{brakke-def}
Suppose that $U\subset\mathbb R^n$ is a domain and 
$1\leq k<n$. 
A family of varifold $\{V_t\}_{t\in [-\Lambda,0]}\subset {\bf V}_k(U)$
is a ($k$-dimensional) Brakke flow if the following
conditions are satisfied.
\begin{enumerate}
    \item[(1)] For a.e.~$t\in [-\Lambda,0]$, $V_t\in {\bf IV}_k(U)$.
    \item[(2)] For all $\tilde U\ssubset U$, we have
    \begin{equation}\label{unimass}
        \sup_{t\in[-\Lambda, 0]}\|V_t\|(\tilde U)<\infty\,.
    \end{equation}
    \item[(3)] For a.e.~$t\in[-\Lambda, 0]$, $\delta
    V_t$ is locally bounded and absolutely 
    continuous with respect to $\|V_t\|$, and thus $h(V_t,\cdot)$ exists. Furthermore, For all
    $\tilde U\ssubset U$, 
    \begin{equation}\label{hbound}
        \int_{-\Lambda}^0\int_{\tilde U} |h(V_t,x)|^2\,
        d\|V_t\|dt<\infty\,.
    \end{equation}
    \item[(4)]
    For all $\varphi\in C^1(U\times[-\Lambda,0];\mathbb R^+)$ with $\varphi(\cdot, t)\in C^1_c(U)$ for all
    $t\in[-\Lambda,0]$, and for all $-\Lambda\leq t_1<t_2\leq 0$, we have
    \begin{equation}\label{defBra}
    \begin{split}
        &\int_U\varphi(x,t_2)\,d\|V_{t_2}\|(x)-\int_U
        \varphi(x,t_1)\,d\|V_{t_1}\|(x) \\ 
        &\leq\int_{t_1}^{t_2}dt\int_{U} \{(\nabla\varphi(x,t)-h(V_t,x)\varphi(x,t))\cdot h(V_t,x)+\varphi_t(x,t)\}\,d\|V_t\|(x)\,.
        \end{split}
    \end{equation}
\end{enumerate}
\end{definition}
The condition (4) is a weak formulation of 
MCF due to Brakke \cite{Brakke}. While Brakke's
original formulation of \eqref{defBra} is in the form of a
differential inequality, nothing is lost if one works in this 
integral formulation. In fact, the latter is advantageous, in that it may easily 
accommodate the setting with additional 
unbounded forcing term as described in the next subsection. 

One may naturally consider a MCF and the corresponding notion of Brakke flow in a general
$n$-dimensional Riemannian manifold $M$. By Nash's 
isometric embedding theorem, we may always consider $M$ to be
a submanifold in a domain $U\subset \mathbb R^N$ for some sufficiently large $N$. A Brakke flow in $M$
can then be defined by asking ${\rm spt}\|V_t\|
\subset M$ for all $t$, (1)-(3), and by replacing the
inequality \eqref{defBra} by 
\begin{equation} \label{defBraMan}
   \begin{split}
        &\int_U\varphi(x,t_2)\,d\|V_{t_2}\|(x)-\int_U
        \varphi(x,t_1)\,d\|V_{t_1}\|(x) \\ 
        &\leq\int_{t_1}^{t_2}dt\int_{G_k(U)} \{(\nabla\varphi(x,t)-h(V_t,x)\varphi(x,t))\cdot (h(V_t,x)-H_M(x,S))+\varphi_t(x,t)\}\,dV_t(x,S).
        \end{split}  
\end{equation}
Here,  $H_M(x,S)=\sum_{i=1}^k{\bf B}_x(v_i,v_i)\in ({\rm Tan}_x M)^\perp$, where ${\bf B}_x(\cdot,\cdot)$
is the second fundamental form of $M\subset 
\mathbb R^N$ at $x\in M$ and the set
$\{v_1,\cdots,v_k\}$ is an orthonormal basis of 
$S\in{\bf G}(n,k)$. See \cite[Section 7]{Ton-2} for a further explanation. The term $H_M$ is 
already perpendicular to $M$ and, for all 
analytical purposes, can be regarded as a 
locally bounded forcing term $u$ as described in the next
subsection. 

\subsection{Assumptions}
The following assumptions are the same as \cite{Kasai-Tone},
and we list them for the reader's convenience.

For an open
set $U\subset\mathbb R^n$, suppose that we have a family of 
$k$-varifolds $\{V_t\}_{t\in[-\Lambda,0]}\subset{\bf V}_k(U)$ and a family of $(\|V_t\|\times dt)$-measurable
vector fields $\{u(\cdot,t)\}_{t\in[-\Lambda,0]}$ defined on 
$U$ and satisfying the following.
\newline
\indent
(A1) For a.e.\,$t\in[-\Lambda,0]$, $V_t$ is a unit density 
$k$-varifold. 
\newline
\indent
(A2) There exists $E_1\in[1,\infty)$ such that
\begin{equation}\label{denrat}
    \|V_t\|(B_r(x))\leq \omega_k r^k E_1\,\,
    \mbox{ for all }B_r(x)\subset U\mbox{ and }t\in[-\Lambda,0]\,.
\end{equation}
\newline
\indent
(A3) The numbers $p\in[2,\infty)$ and $q\in(2,\infty)$
satisfy
\begin{equation}\label{pq}
    \zeta:=1-\frac{k}{p}-\frac{2}{q}>0\,,
\end{equation}
and $u$ satisfies
\begin{equation}
    \|u\|_{L^{p,q}(U \times \left[-\Lambda,0\right])}:=
    \left(\int_{-\Lambda}^0\left(\int_U |u(x,t)|^p\,d\|V_t\|(x)
    \right)^{\frac{q}{p}}\,dt\right)^{\frac{1}{q}} <\infty\,.
\end{equation}
\newline
\indent
(A4) For all $\varphi\in C^1(U\times [-\Lambda,0];\mathbb R^+)$
with $\varphi(\cdot,t)\in C^1_c(U)$ for all $t\in[-\Lambda,0]$,
and for all $-\Lambda\leq t_1<t_2\leq 0$,
we have
\begin{equation} \label{Bra-ineq}
\begin{split}
    &\int_U \varphi(x,t_2)\,d\|V_{t_2}\|(x)-\int_U \varphi(x,t_1)\,d\|V_{t_1}\|(x)
    \\&\leq \int_{t_1}^{t_2}dt\int_U \{
    (\nabla\varphi(x,t)-h(V_t,x)\varphi(x,t))\cdot (h(V_t,x)+
    u^{\perp}(x,t))+\varphi_t (x,t)\}\,
    d\|V_t\|(x)\,.
    \end{split}
\end{equation}
Implicitly in the 
formulation of (A4),
 it is assumed that the first variation $\delta V_t$ of $V_t$
 is locally bounded and it is absolutely continuous with respect to $\|V_t\|$ (so that $h(V_t,x)$ exists) for a.e. $t\in[-\Lambda,0]$, 
 and that $h(V_t,x)\in L^2_{\rm loc}
 (U\times[-\Lambda, 0])$. 
For a.e. $t\in[-\Lambda,0]$, $u^\perp(x,t)$ is the projection of $u$ onto the orthogonal complement of the approximate tangent space
to $V_t$ at $x$, which exists for $\|V_t\|$-a.e. $x$ due to the integrality of $V_t$.
 The inequality \eqref{Bra-ineq}
characterizes formally that the normal velocity of the flow is equal to the
mean curvature vector $h$ plus $u^\perp$. When $u \equiv 0$, \eqref{Bra-ineq} simply becomes \eqref{defBra}, and thus $\{V_t\}_{t \in \left[-\Lambda,0\right]}$ is a Brakke flow. More generally, \eqref{Bra-ineq} includes the case when $\{V_t\}_{t \in \left[-\Lambda,0\right]}$ is a Brakke flow in a Riemannian manifold $M$, which corresponds to $u(x,t) := -H_M(x,{\rm Tan}_x \|V_t\|)$: indeed, as already explained, in this case $u(x,t) \in ({\rm Tan}_x M)^\perp$, and thus in particular $u(x,t) \in ({\rm Tan}_x \|V_t\|)^\perp$ given that $\spt\|V_t\| \subset M$ for all $t$. One technical 
point to add is that (A1) may be replaced, for all purposes
of the present paper, by 
\newline\indent
(A1') for a.e.\,$t\in[-\Lambda,0]$, $V_t\in {\bf IV}_k(U)$.
\newline
The reason for this is that the assumptions of the main theorems essentially allow only unit density varifolds. We will nonetheless adopt (A1) as our working hypothesis, in order to be consistent with \cite{Kasai-Tone}.  As already mentioned, there are in the literature various results guaranteeing the existence of (generalized) MCF (possibly with forcing term $u$) satisfying (A1)-(A4).

\subsection{Main results}
The first theorem is the basic $\varepsilon$-regularity
theorem, and it corresponds to a parabolic version
of Allard's regularity theorem; the 
second theorem gives a $C^{2,\alpha}$ estimate.
They are the end-time regularity counterpart of
\cite{Kasai-Tone} and \cite{Ton-2}, respectively.

\begin{theorem}\label{main2}
Corresponding to $\nu\in(0,1)$, $E_1\in[1,\infty)$, $p$ and 
$q$ satisfying \eqref{pq}, there
exist $\Cl[eps]{e1}\in (0,1)$ and $\Cl[con]{c1}\in(1,\infty)$ 
depending only on $n,k,p,q,\nu,E_1$ with the following
property. For $R\in(0,\infty)$, $T\in{\bf G}(n,k)$, and $U=\rC(T,2R)$,
suppose $\{V_t\}_{t\in[-R^2,0]}$ and $\{u(\cdot,t)\}_{t\in[-R^2,0]}$
satisfy (A1)-(A4). Suppose furthermore that we have
\begin{equation}\label{udef1}
    \|V_{-4R^2/5}\|(\phi_{T,R}^2)\leq (2-\nu)\,{\bf c} \,R^k\,,
\end{equation}
\begin{equation}\label{udef2}
    (\rC(T,\nu R)\times\{0\})\cap {\rm spt}(\|V_t\|\times dt)\neq \emptyset\,,
\end{equation}
\begin{equation}\label{udef3}
    \mu:=\left(R^{-(k+4)}\int_{-R^2}^0\int_{\rC(T,2R)}
    |T^{\perp}(x)|^2\,d\|V_t\|dt\right)^{\frac12}<\Cr{e1}\,,
\end{equation}
\begin{equation}\label{udef4}
\|u\|_{p,q}:=R^\zeta\|u\|_{L^{p,q}(\rC(T,2R)\times[-R^2,0])}<\Cr{e1}\,.
\end{equation}
Let $\tilde D:=\left(B_{R/2} \cap T\right) \times[-R^2/4,0)$. Then there are $C^{1,\zeta}$ 
functions
$f:\tilde D\rightarrow T^\perp$ and $F:\tilde D\rightarrow \mathbb R^n$ such
that $T(F(y,t))=y$ and $T^\perp(F(y,t))=f(y,t)$ for all $(y,t)\in\tilde D$,
\begin{equation} \label{udef5-12}
    {\rm spt}\|V_t\|\cap \rC(T,R/2)={\rm image}\,F(\cdot,t)
    \mbox{ for all }t\in[-R^2/4,0),
\end{equation}
\begin{equation}\label{udef5}
R^{-1}\|f\|_0+\|\nabla f\|_0
+R^\zeta[f]_{1+\zeta}
\leq \Cr{c1}\max\{\mu,\|u\|_{p,q}\}.
\end{equation}
\end{theorem}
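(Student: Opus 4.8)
The plan is to follow, step by step, the proof of the $\varepsilon$-regularity theorem of \cite{Kasai-Tone}, upgrading at each stage the interior-in-time estimates to estimates valid all the way up to the end-time $t=0$. The one genuinely new analytic input is the observation that the ``waiting time'' required in the energy (Caccioppoli-type) estimate at an endpoint of the time interval is controlled by the height of the flow, hence becomes negligible along a blow-up sequence; everything else amounts to carrying this improvement through the Lipschitz approximation, the blow-up, and a covering argument, following the templates of \cite{Kasai-Tone} and \cite{Ton-2}. Concretely, the first step is the energy estimate: testing \eqref{Bra-ineq} against the squared cutoffs $\phi_{T,R}^2$ and their rescalings, as in \cite[Section 5]{Kasai-Tone}, one shows that the measure excess $t\mapsto \|V_t\|(\phi_{T,R}^2)-\mathbf{c}\,R^k$ obeys an ordinary differential inequality whose coefficients are controlled by $\int |h(V_t,\cdot)|^2\,\d\|V_t\|$, by $\|u\|_{p,q}$, and by the $L^2$-height $\mu$. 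Using \eqref{udef1} together with Huisken's monotonicity formula and a clearing-out/compactness argument to rule out higher-multiplicity points in a slightly smaller parabolic region (so that the excess is nonnegative there), integration of this inequality yields, schematically,
\[
\int_{-R^2/4}^{-\delta R^2}\Big(\|V_t\|(\phi_{T,R}^2)-\mathbf{c}\,R^k\Big)\,\d t\;\lesssim\;R^{k+2}\big(\mu^2+\|u\|_{p,q}^2\big),
\]
the crucial novelty with respect to \cite{Kasai-Tone} being that the excised time-neighborhood of $t=0$ has \emph{vanishing} size, $\delta=\delta(\mu,\|u\|_{p,q})\to0$ as $\mu,\|u\|_{p,q}\to0$.

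Next, following \cite[Section 7]{Kasai-Tone} but systematically replacing the cylinders $P_r$ by the cylinders $\tilde P_r$ of \eqref{defpara} and invoking the energy estimate, one constructs a parabolic Lipschitz function $g$ on a region $(B_{r_0}\cap T)\times[-r_0^2,-\delta r_0^2]$ whose graph coincides with $\spt\|V_t\|$ off a set of small $(\|V_t\|\times\d t)$-measure, with $\Lip(g)$ bounded and $\|g\|_0$, $\|\nabla g\|_0$ and the relevant $L^2$-norms of $g,\nabla g$ controlled by $\max\{\mu,\|u\|_{p,q}\}$. One then runs the blow-up: arguing by contradiction, take flows and forcings satisfying (A1)--(A4) and \eqref{udef1}--\eqref{udef4} with $E_i:=\max\{\mu_i,\|u_i\|_{p,q}\}\to0$ for which the conclusion fails, and set $\bar g_i:=E_i^{-1}g_i$. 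The Lipschitz estimates give uniform $L^2$-bounds for $\bar g_i,\nabla\bar g_i$; along a subsequence $\bar g_i\weak\bar g$ weakly in $L^2$, and one upgrades this to strong $L^2$-convergence on every parabolic subcylinder whose closure avoids $t=0$ --- this is where $\delta_i\to0$ is essential, so that no slab of time is lost near the end-time --- with $\bar g$ a solution of the heat equation (the forcing contributes nothing in the limit because $\zeta>0$ makes it subcritical, just as in \cite{Kasai-Tone}). Since, in addition, the $L^2$-mass of $\bar g_i$ carried by a neighborhood of $t=0$ is itself small, linear regularity for the heat equation transfers into an excess-decay statement: after replacing $T$ by a nearby plane, the rescaled excess over $\tilde P_{\theta R}(0,0)$ improves by a fixed power of $\theta$ (with exponent matched to $\zeta$), for a suitably fixed small $\theta$. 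Iterating this decay at a point of $\spt\|V_t\|$ lying in the final time slice (furnished by \eqref{udef2}), which we may assume to be the origin, produces in the standard way a plane and a $C^{1,\zeta}$ graph over it agreeing with $\spt\|V_t\|$, with the estimates \eqref{udef5}, on a parabolic domain $\Omega_0$ that touches the origin and is contained in $\{(x,t):|x|^2<|t|\}$.

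To pass from $\Omega_0$ to the full domain $\tilde D$, one uses a covering argument. Every point $(x_1,t_1)\in\partial\Omega_0$ with $t_1<0$ lies in $\spt\|V_t\|$ by \eqref{udef5-12} on $\Omega_0$; for $\rho$ small one has $\tilde P_\rho(x_1,t_1)\subset\rC(T,2R)\times[-R^2,0)$, the energy estimate guarantees that the rescaled excess over $\tilde P_\rho(x_1,t_1)$ is small, and the near-flatness of $\Omega_0$ near $(x_1,t_1)$ supplies the analogues of \eqref{udef1} (with $T$ replaced by the tangent plane to the graph there) and of \eqref{udef2}. Hence the blow-up argument applies afresh at the re-centered scale and yields a $C^{1,\zeta}$ graphical region $\Omega_1$ touching $(x_1,t_1)$. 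Iterating this over boundary points, and exploiting the uniformity of all the constants, the union of the resulting graphical regions covers $\tilde D=(B_{R/2}\cap T)\times[-R^2/4,0)$; the pieces agree on overlaps and glue to the functions $f,F$ of the statement, satisfying \eqref{udef5-12} and \eqref{udef5}.

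The hard part is the blow-up at the end-time: obtaining strong $L^2$-convergence of the rescaled Lipschitz approximations, and the negligibility of their mass near $t=0$, without discarding a fixed slab of time --- which is precisely what the short-waiting-time refinement of the energy estimate is engineered to deliver. Once the $C^{1,\zeta}$ estimate up to $t=0$ is in hand, the $C^{2,\alpha}$ estimate is obtained by repeating \cite{Ton-2} with no change near the end-time, and all higher derivative bounds follow from linear parabolic regularity (inhomogeneous when $u\not\equiv0$).
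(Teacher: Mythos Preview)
Your outline is essentially the paper's own strategy, and the key analytic insight --- that the waiting time in the energy estimate can be made compatible with end-time regularity --- is correctly identified. Two places, however, deserve sharper treatment than your sketch provides.

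First, the mechanism at the end-time is not quite that the excised slab $\delta=\delta(\mu,\|u\|)$ shrinks to zero. In the paper the waiting time is \emph{fixed} at $\tau=\theta^6/2$ (where $\theta$ is the eventual dilation factor), and the energy estimate is proved on $[-1/2,-2\tau]$ with a constant $K/\tau^3$; the contribution of the remaining interval $(-\theta^6,0)$ to the excess integral is then bounded directly using the $L^\infty$ height control $|T^\perp(x)|\leq c\,\mu$ together with (A2), yielding an extra $c\,\theta^2\mu^2$ which is absorbed by choosing $\theta$ small once and for all. Your ``$\delta\to0$'' framing may also be workable, but it is not what the paper does, and you should be aware that the estimate has a $\tau^{-3}$ prefactor that must be balanced against the gain from the blow-up.

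Second, the covering step is not as automatic as ``iterating this over boundary points \ldots\ covers $\tilde D$'' suggests. Each application of the iterated decay at a boundary point $(x_0,t_0)$ yields graphicality only on a cone $\{|x-x_0|^2<|t-t_0|\}$ of \emph{fixed} aperture, so one must show that repeatedly adding such cones along the boundary of the current region eventually opens up to contain the full cylinder $B_{1/2}\times[-1/4,0)$. The paper does this via an explicit inductive scheme: it defines regions $D_j=\{|x|^2<\sigma_j|t|,\ t\in[-\tau_j,0)\}$ with $\sigma_j=\sum_{i=1}^j 1/i$ and verifies by a direct computation (solving an inequality in an auxiliary parameter $\alpha$) that $D_{j+1}\subset D_j\cup\bigcup_{(x_0,t_0)\in\partial D_j}\tilde D^{(x_0,t_0)}$. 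The divergence of the harmonic series is what makes $\sigma_j\to\infty$ and hence $\bigcup_j D_j\supset B_{1/2}\times[-1/4,0)$. This calculation is elementary but not vacuous, and your sketch does not indicate how you intend to carry it out.
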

As discussed in the Introduction, \eqref{udef1} excludes the possibility that $V_t$ consists of multiple sheets in $\rC(T,R)$, and it can replace the assumption that $V_t$ be unit density. Notice that \eqref{udef1} is stated as a property valid at time $-4R^2/5$; nonetheless, the validity of \eqref{Bra-ineq} implies that in fact $\|V_t\|(\phi_{T,R}^2)$ is an almost-decreasing function of $t$, even when the forcing term $u$ is present. As a consequence, the mass estimate in \eqref{udef1} remains valid when $\|V_{-4R^2/5}\|$ is replaced by $\|V_t\|$ for $t > -4R^2/5$, modulo replacing $\nu$ with $\nu' \in (0,\nu)$, provided $\Cr{e1}$ is sufficiently small depending on $\nu'$. The assumption \eqref{udef2} prevents sudden vanishing of the flow prior to the end-time. Finally, \eqref{udef3} is a smallness requirement on the (space-time) $L^2$-height of the flow, namely of the space-time $L^2$-distance of the flow from  the given $k$-dimensional plane $T$. We notice explicitly that, as a consequence of \eqref{udef5-12}-\eqref{udef5}, one can naturally extend $f$ and $F$ to $t=0$ as $C^{1,\zeta}$ functions.
Nonetheless, $\rC(T,R/2)\cap {\rm spt}\|V_0\|\subset {\rm image}\,
F$, but equality may not hold in general.  

When $u$ is
$\alpha$-H\"{o}lder continuous, we have the $C^{2,\alpha}$-regularity estimate as follows.
\begin{theorem}\label{main3}
Corresponding to $\nu\in(0,1)$, $E_1\in[1,\infty)$ and $\alpha\in(0,1)$, there exist $\Cl[eps]{e2}\in(0,\Cr{e1})$ and 
$\Cl[con]{c2}\in(1,\infty)$ depending only on $n,k,\alpha,
\nu,E_1$ with the following property. 
For $R\in(0,\infty)$, $T\in{\bf G}(n,k)$, and $U=\rC(T,2R)$,
suppose $\{V_t\}_{t\in[-R^2,0]}$ and $\{u(\cdot,t)\}_{t\in[-R^2,0]}$
satisfy (A1), (A2), (A4) and in place of (A3), assume $u\in C^{0,\alpha}(\rC(T,2R)\times[-R^2,0])$. Furthermore, assume \eqref{udef1},
\eqref{udef2}, \eqref{udef3} with $\Cr{e2}$, and in place of \eqref{udef4}, 
\begin{equation*}
   \|u\|_{\alpha}:=R \|u\|_0+R^{1+\alpha}[u]_{\alpha}
   <\Cr{e2}\,.
\end{equation*}
Then the conclusion
of Theorem \ref{main2} holds in the $C^{2,\alpha}$ class, that is \eqref{udef5} can be replaced by 
\begin{equation}
   R^{-1} \|f\|_0+\|\nabla f\|_0
   +R(\|\nabla^2 f\|_0+\|f_t\|_0)+R^{1+\alpha}
   ([\nabla^2 f]_\alpha+[f_t]_\alpha)
   \leq \Cr{c2}\max\{\mu,\|u\|_\alpha\}.
\end{equation}
Moreover, ${\rm image}\,F$ satisfies in the classical (pointwise) sense the
motion law that normal velocity $=h+u^\perp$.
\end{theorem}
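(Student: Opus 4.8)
\textbf{Proof proposal for Theorem \ref{main3}.}

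The plan is to derive the $C^{2,\alpha}$ end-time estimate from the $C^{1,\zeta}$ end-time estimate of Theorem \ref{main2} by re-running the argument of \cite{Ton-2}, with the only modification being the replacement of the interior parabolic cylinders used there by the ``backward'' cylinders $\tilde P_r$ that touch the end-time. First I would invoke Theorem \ref{main2} (noting $\Cr{e2} < \Cr{e1}$ and that the H\"older smallness $\|u\|_\alpha < \Cr{e2}$ controls the $L^{p,q}$-smallness $\|u\|_{p,q}$ in \eqref{udef4} for any admissible $p,q$, since $\alpha$-H\"older functions lie in every $L^{p,q}$ on a bounded domain with the relevant quantitative bound): this already gives the graphical representation \eqref{udef5-12} and a $C^{1,\zeta}$ function $f$ on $\tilde D = (B_{R/2}\cap T)\times[-R^2/4,0)$ with $R^{-1}\|f\|_0 + \|\nabla f\|_0 + R^\zeta[f]_{1+\zeta} \leq \Cr{c1}\max\{\mu,\|u\|_{p,q}\}$. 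After a standard rescaling I may assume $R=1$. Once graphicality is in hand, the equation satisfied by $f$ is the quasilinear parabolic PDE expressing that the normal velocity of $\mathrm{graph}\,f$ equals $h + u^\perp$; on the region where $\|\nabla f\|_0$ is small this is a uniformly parabolic scalar equation for $f$ of the form $f_t = a^{ij}(\nabla f)\,\partial_i\partial_j f + \text{(lower order)} + (\text{forcing depending on }u)$, with smooth, uniformly elliptic $a^{ij}$ close to the identity.

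The core of the argument is then local parabolic Schauder theory applied up to the final time slice. The key point, exactly as in the $C^{1,\zeta}$ blow-up, is that Schauder estimates for the (inhomogeneous, quasilinear) heat equation on a cylinder $\tilde P_r(y_0,0) = (B_r(y_0)\cap T)\times(-r^2,0)$ that is closed at its top do \emph{not} require any room above: the estimate $[f]_{2+\alpha,\tilde P_{r/2}(y_0,0)} \leq c\big(\|f\|_{0,\tilde P_r(y_0,0)} + [u]_{\alpha} + \|u\|_0\big)$ holds because $t=0$ is not part of the parabolic boundary of $\tilde P_r(y_0,0)$ — only the lateral boundary and the bottom are, and both lie at definite distance from the shrunken cylinder. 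I would cover $\tilde D' := (B_{1/2}\cap T)\times[-1/4,0)$ by such backward cylinders: for any $(y_0,t_0)\in \tilde D'$ with $t_0$ close to $0$, apply the Schauder estimate on $\tilde P_{r}(y_0,0)$ with $r$ comparable to $\mathrm{dist}(y_0,\partial B_{1/2})$ and to $|t_0|^{1/2}$ near the parabolic boundary but bounded below by a fixed fraction near the center, and for $t_0$ bounded away from $0$ just use the interior estimates of \cite{Ton-2}. Summing these local estimates, re-inserting the $C^{1,\zeta}$ bound from Theorem \ref{main2} to control the right-hand side $\|f\|_0 \leq \Cr{c1}\max\{\mu,\|u\|_{p,q}\} \leq \Cr{c1}\max\{\mu,\|u\|_\alpha\}$, and absorbing constants gives
\[
\|\nabla^2 f\|_0 + \|f_t\|_0 + [\nabla^2 f]_\alpha + [f_t]_\alpha \leq \Cr{c2}\max\{\mu,\|u\|_\alpha\}
\]
on $\tilde D'$, which is the claimed estimate after undoing the rescaling. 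The classical motion law $\text{normal velocity} = h + u^\perp$ on $\mathrm{image}\,F$ then holds pointwise because $f \in C^{2,\alpha}$ solves the corresponding PDE classically and $\mathrm{spt}\|V_t\| = \mathrm{image}\,F(\cdot,t)$.

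The step I expect to be the genuine content — as opposed to routine bookkeeping — is verifying that nothing in the \cite{Ton-2} argument secretly used a forward time margin. In \cite{Ton-2} the $C^{2,\alpha}$ estimate is obtained, just as the $C^{1,\zeta}$ one, through a difference-quotient / freezing-coefficients scheme combined with an interior Schauder or Campanato-type decay iteration; each such iteration step is performed on nested parabolic cylinders, and one must check that every cylinder can be taken of the backward type $\tilde P_r$ without loss. The only place where this could fail is if some intermediate estimate were quoted in a form (e.g. $[f]_{2+\alpha}$ on $P_r$ controlled by data on $P_{2r}$, \emph{centered}) that straddles $t=0$; in the end-time setting one instead quotes the one-sided version, valid because the parabolic boundary of $\tilde P_{2r}(y_0,0)$ excludes the top. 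This is precisely the substitution already carried out for the first-order theory in Sections \ref{s:en_est}–\ref{buarg}, so the modification is the same in spirit — replace $P_r$ by $\tilde P_r$ throughout, and observe that the short-waiting-time energy estimate of Section \ref{s:en_est} is exactly what makes the right-hand side of the second-order estimate finite near $t=0$. I would therefore organize the proof as: (i) rescale to $R=1$ and invoke Theorem \ref{main2}; (ii) write down the quasilinear parabolic equation for $f$; (iii) quote the one-sided (backward-cylinder) Schauder estimate; (iv) cover $\tilde D'$ by backward cylinders and sum, feeding in the $C^{1,\zeta}$ bound; (v) undo the rescaling and conclude the pointwise motion law. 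No new idea beyond the end-time adaptation already developed in the paper is needed; the work is in checking that the \cite{Ton-2} iteration is compatible with it.
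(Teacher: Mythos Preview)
Your high-level strategy---invoke Theorem~\ref{main2} for the $C^{1,\zeta}$ graph, then carry the \cite{Ton-2} machinery to the end-time by replacing centered cylinders with backward ones---matches the paper's. However, your description of what the \cite{Ton-2} argument actually \emph{is} diverges significantly from the paper's account, and the version you sketch has a gap. You describe \cite{Ton-2} as a difference-quotient / freezing-coefficients scheme plus Schauder/Campanato iteration applied to the quasilinear PDE $f_t = a^{ij}(\nabla f)\partial_i\partial_j f + \cdots$ that the graph ``satisfies''. But at the $C^{1,\zeta}$ stage neither $f_t$ nor $\nabla^2 f$ exists, and the Brakke inequality is a one-sided varifold inequality, not an equation for $f$; passing from the varifold formulation to any PDE for $f$ to which parabolic Schauder theory could be applied is exactly the nontrivial content of \cite{Ton-2}, not a preliminary step one can quote before invoking it.

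The paper's proof instead recalls that \cite{Ton-2} works at the varifold level: one introduces the graphical distance $Q_g$ from the flow to an auxiliary caloric function $g$ (\cite[Definition~4.1]{Ton-2}), proves a ``sub-caloric'' identity for $Q_g$ (\cite[Lemma~4.2]{Ton-2}) and an $L^\infty$ estimate (\cite[Proposition~4.3]{Ton-2}), and then runs a blow-up on the $L^2$-norm of $Q_g$ to obtain $C^{2,\alpha}$ decay. The crucial observation singled out in the paper is that Proposition~4.3 of \cite{Ton-2} is \emph{already} an end-time estimate, so once Theorem~\ref{main2} supplies the $C^{1,\zeta}$ graph up to $t=0$, the entire \cite{Ton-2} iteration goes through verbatim after re-centering the domains and the second-order Taylor expansion at the end-time point. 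There is no Schauder step and no need to first identify a PDE for $f$; the classical motion law is an output of the $C^{2,\alpha}$ regularity, not an input.
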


Here one can extend $f$ and $F$ as $C^{2,\alpha}$
functions to $t=0$ on $B_{R/2} \cap T$. 
Once the regularity goes up to $C^{2,\alpha}$ and the
surfaces satisfy the PDE pointwise, then the parabolic Schauder
estimates can be applied in the case that $u$ is more regular. In particular, we will deduce $C^{k+2,\alpha}$ estimates if $u \in C^{k,\alpha}$. In the case of Brakke flow, when $u=0$, we have all the derivative
estimates in terms of $\mu$. 

In the next sections, we prove how the results stated in the Introduction, namely Theorem \ref{main1} and Theorem \ref{main1cor} can be deduced from Theorems \ref{main2} and \ref{main3}.

\subsection{Proof of Theorem \ref{main1}}\label{app1}

Let $E_0 \in \left( 0, \infty \right)$, and suppose $\{V_t\}_{t \in \left(-2,0\right]}$ is a $k$-dimensional Brakke flow satisfying (1)-(4) in Theorem \ref{main1} with $\varepsilon \in \left(0, \varepsilon_0\right]$. We prove that, if $\varepsilon_0$ is chosen sufficiently small, then $\{V_t\}_{t \in \left[-1,0\right]}$ satisfies the hypotheses of Theorem \ref{main3}. We set $R=1$, $T=\R^k\times\{0\}$, and $U=\rC(T,2)=:\rC_2$, and we notice that (A1)(A3)(A4) are satisfied by assumption. To check (A2), let $t \in \left[-1,0\right]$ and $B_r(x) \subset U$: it is then a classical consequence (see e.g. \cite[Proposition 3.5]{Ton1}) of Huisken's monotonicity formula that
\[
r^{-k} \|V_t\|(B_r(x)) \leq c\, \sup_{s\in \left[-2,t\right]} \|V_s\|(\rC_3) \leq c E_0\,,
\]
where $c$ is a universal constant. This proves (A2). Next, using that
\[
\phi_T^2 \leq \mathrm{1}_{\rC_1} \qquad \mbox{and} \qquad \mathbf{c} \geq \frac{2}{3} \omega_k\,,
\]
we see that (2) implies
\[
\|V_{-4/5}\|(\phi_T^2) \leq \|V_{-4/5}\|(\rC_1) \leq \frac54 \omega_k \leq \frac{15}{8} \, \mathbf{c}\,, 
\]
that is \eqref{udef1} holds with $\nu = 1/8$. Also, \eqref{udef3} with $\Cr{e2}$ is an immediate consequence of (4) as soon as $\varepsilon_0 \leq \Cr{e2}$, whereas \eqref{udef2} follows from (3) and Huisken's monotonicity formula (see, for instance, \cite[Proposition 3.6]{Ton1}). Hence, Theorem \ref{main3} applies, and Theorem \ref{main1} follows from the fact that the forcing field $u \equiv 0$ is smooth. \qed

\subsection{Proof of Theorem \ref{main1cor}} \label{app2}

In order to simplify the presentation, we will work under the assumption that $U = \R^n$, and that $\spt\|V_t\| \subset B_R$ for every $t \in \left(a, b \right]$, for some $R >0$. The general case can be obtained with simple modifications, but the underlying idea is the same; see Remark \ref{rmk:localization+manifold}.

Before proceeding with the proof, let us recall the classical definition of Gaussian density in the context of Brakke flows; see for instance \cite{White_stratification} for a thorough presentation. Under the above assumptions, and setting $\mathscr{V} = \{V_t\}_{t\in \left(a, b \right]}$, for any point $(x_0,t_0) \in \R^n \times \left( a, b \right]$ we define
\begin{equation} \label{def:density}
   \Theta(\mathscr{V},(x_0,t_0)) := \lim_{\tau \to 0^+} \frac{1}{(4\pi \tau)^{\frac{k}{2}}} \int_{\R^n}  \exp \left( - \frac{|y-x_0|^2}{4\tau} \right) \, d\|V_{t_0-\tau}\|(y)\,.
\end{equation}
The existence of the above limit is guaranteed by the fact that the function 
\[
\tau \in \left( 0, t_0-a \right) \mapsto \frac{1}{(4\pi \tau)^{\frac{k}{2}}} \int_{\R^n}  \exp \left( - \frac{|y-x_0|^2}{4\tau} \right) \, d\|V_{t_0-\tau}\|(y)
\]
is monotone increasing as a consequence of Huisken's monotonicity formula.

\smallskip

\textit{Step one.} Assume first that $\Theta (\mathscr{V},(x_0,t_0)) = 1$, and let $\mathscr{V}' = \{V_t'\}_{t \in \left(-\infty,0\right)}$ be any tangent flow to $\mathscr{V}$ at $(x_0,t_0)$. We then have
\[
1 = \Theta(\mathscr{V},(x_0,t_0)) = \Theta(\mathscr{V}',(0,0))\,,
\]
so that, in particular, $\Theta (\mathscr{V}', (y,s)) \leq 1$ for every $(y,s) \in \R^n \times \left(-\infty, 0 \right)$. Since it is a general fact that $\Theta (\mathscr{V},(y,s)) \geq 1$ for an integral Brakke flow $\mathscr{V}$ (see Appendix \ref{a:densities and tangent flows}), for every $(y,s) \in \spt(\|V_t'\|\times dt)$, we have
\[
\Theta (\mathscr{V}', (y,s)) = 1 = \Theta (\mathscr{V}', (0,0)) \qquad \mbox{for all $(y,s) \in \spt(\|V_t'\|\times dt)$}\,.
\]
This immediately implies (see e.g. \cite[Theorem 8.1]{White_stratification}) that there exist $a \in \left[0,\infty\right]$ and $T \in \mathbf{G}(n,k)$ such that
\[
V_t' = \mathbf{var}(T,1)\qquad \mbox{for every $t \in \left(-\infty, a\right)$}\,,
\]
namely that $\mathscr{V}'$ is a static $k$-dimensional plane with unit density. Therefore, there exists $\rho > 0$ such that the hypotheses of Theorem \ref{main3} are satisfied with $R=\rho$ by the flow $\{(\tau_{x_0})_\sharp V_{t_0+s}\}_{s \in \left[-\rho^2,0\right]}$, where $\tau_{x_0}$ is the translation $\tau_{x_0}(x) := x-x_0$. Thus, by Theorem \ref{main3}, for all $t \in \left[t_0-\rho^2/4,t_0\right)$, $\spt\|V_t\| \cap \left( x_0 + \rC(T,\rho/2)\right)$ coincides with the graph of a $C^\infty$ function
\[
f \colon B_{\rho/2}(x_0) \cap (x_0 + T) \times \left[t_0 - \rho^2/4,t_0\right) \to T^\perp
\]
which satisfies the mean curvature flow in the classical sense and which can be extended smoothly on $B_{\rho/2}(x_0) \cap (x_0+T)$ up to $t=t_0$. This completes the proof in case $\Theta(\mathscr{V},(x_0,t_0))=1$.

\smallskip

\textit{Step two.} The proof that the same result holds when $\Theta(\mathscr{V},(x_0,t_0)) \leq 1+\varepsilon_1$ for some sufficiently small $\varepsilon_1$ is by a standard blow-up argument. First, notice that it is sufficient to prove that there exists $\varepsilon_1>0$ such that if $\mathscr{V}$ is a tangent flow \footnote{That is, $\mathscr{V}=\{V_t\}_{t \in (-\infty,0)}$ satisfies $\|(\iota_r)_\sharp V_{-r^2}\| = \|V_{-1}\|$ for every $r > 0$, where $\iota_r(x):=r^{-1}x$, as well as 
\[
h(V_t,x)=\frac{S^\perp(x)}{2t} \qquad \mbox{for $V_t$-a.e. $(x,S) \in G_k(\R^n)$, for a.e. $t < 0$}\,.
\]
} and $\Theta(\mathscr{V},(0,0)) \leq 1 + \varepsilon_1$ then $\mathscr{V}$ is a static $k$-dimensional plane with unit density.

To see this, let $\{\mathscr{V}_j\}_{j \in \mathbb N}$ be a sequence of tangent flows such that $\Theta(\mathscr{V}_j, (0,0)) \leq 1 + \sfrac{1}{j}$, and notice that, for each $j$, the function
\[
\tau \in \left( 0, \infty \right) \mapsto \frac{1}{(4\pi\tau)^{\frac{k}{2}}} \int_{\R^n} \exp\left( -\frac{|y|^2}{4\tau} \right) \, d\|(V_j)_{-\tau}\|(y)
\]
is constant, so that, in particular
\[
\frac{1}{(4\pi)^{\frac{k}{2}}} \int_{\R^n} \exp\left( -\frac{|y|^2}{4} \right) \, d\|(V_j)_{-1}\|(y) = \Theta(\mathscr{V}_j,(0,0)) \leq 1 + \frac{1}{j}\,.
\]
Apply next the compactness theorem for Brakke flows, and let $\mathscr{V}$ be the limit Brakke flow of a (not relabeled) subsequence of $\{\mathscr{V}_j\}_j$. We have then 
\[
\begin{split}
1 \leq \Theta (\mathscr{V}, (0,0)) & \leq \frac{1}{(4\pi)^{\frac{k}{2}}} \int_{\R^n} \exp\left( -\frac{|y|^2}{4} \right) \, d\|V_{-1}\|(y) \\& \leq \liminf_{j \to \infty}\frac{1}{(4\pi)^{\frac{k}{2}}} \int_{\R^n} \exp\left( -\frac{|y|^2}{4} \right) \, d\|(V_j)_{-1}\|(y)\leq 1\,,
\end{split}
\]
and thus $\Theta(\mathscr{V},(0,0))=1$. By \textit{Step one}, $\spt\|V_t\|$ is a smooth graph evolving by mean curvature in some $B_\rho(0)$ for all $t \in \left[-\rho^2,0\right)$, and the flow can be extended smoothly in $B_\rho (0)$ up to $t=0$. Since $\mathscr{V}$ is the limit Brakke flow of $\mathscr{V}_j$, then for all sufficiently large $j$ also the flow $\mathscr{V}_j$ satisfies all assumptions of Theorem \ref{main3} (with $u \equiv 0$) in a parabolic domain $\rC(T,\rho/2) \times [-\rho^2/16,0]$, and thus $\mathscr{V}_j$ is a smooth mean curvature flow in a neighborhood of $x=0$ until the end-time $t=0$. Since $\mathscr{V}_j$ is a tangent flow, it must then be a static $k$-dimensional plane with unit density, and the proof is complete. \qed

\begin{remark}\label{rmk:localization+manifold}
In case $\{V_t\}_{t \in (a,b]}$ is a $k$-dimensional Brakke flow in a domain $U \subset \R^n$, the same proof goes through, except that we need a suitably modified monotonicity formula to make sense of the Gaussian density. More precisely, if $(x_0,t_0) \in U \times (a,b]$ and $B_{2r}(x_0) \subset U$ then for any function $\psi \colon B_{2r}(x_0) \to [0,1]$ that is smooth, compactly supported, equal to $1$ on $B_r(x_0)$ and satisfying a bound of the form $r |\nabla \psi| + r^2 \|D^2\psi\|\leq b$, the limit
\[
\lim_{\tau \to 0^+} \frac{1}{(4\pi \tau)^{\frac{k}{2}}} \int_{\R^n}  \exp \left( - \frac{|y-x_0|^2}{4\tau} \right) \, \psi (y) \, d\|V_{t_0-\tau}\|(y)
\]
exists and it is independent of $\psi$. This limit is the Gaussian density $\Theta(\mathscr{V}, (x_0,t_0))$. The same limit also exists in the case when $\mathscr{V}=\{V_t\}_{t \in (a,b]}$ is a $k$-dimensional Brakke flow in a domain $U$ of an $n$-dimensional Riemannian manifold $M$, or, more generally, when $\mathscr{V}=\{V_t\}_{t\in (a,b]}$ is a flow with a locally bounded forcing term $u$, with the only caveat that the proof of existence of the limit involves a more complicated monotonicity formula. Once the existence of the density has been established, tangent flows to such a $\mathscr{V}$ at $(x_0,t_0)$ are Brakke flows in $\R^n$ (in the manifold case, we are identifying $\R^n$ with ${\rm Tan}_{x_0}M$), and the proof proceeds verbatim. For the proof of the monotonicity formulas needed in these cases, the interested reader can consult \cite[Sections 10 and 11]{White_stratification}.
\end{remark}

\section{Energy estimates} \label{s:en_est}

The main result of this section is the following theorem, which establishes that the deviation of the $k$-dimensional area of surfaces that are $L^2$-close to a plane $T$ and move by (forced) unit-density Brakke flow from the area of a single $k$-dimensional disk can be estimated in terms of the $L^2$-height with respect to $T$. An analogous result was proved in \cite[Theorem 5.7]{Kasai-Tone}, but the version we are going to present here has an important advantage, which is ultimately the key to unlock the end-time regularity. More precisely, while \cite[Theorem 5.7]{Kasai-Tone} concludes the validity of the estimate up to some waiting time both at the beginning and at the end of the time interval where the $L^2$-height is assumed to be small, here we extend the estimate arbitrarily near the end-time as long as we know that the area of the moving surfaces is a sufficiently large portion of the area of the disk (namely, as long as we know that the flow is not vanishing). The price to pay is that the estimate comes with a constant which deteriorates while approaching the end-time. The end-time regularity will result from appropriately balancing the size of this constant with the vanishing of the $L^2$-height along a blow-up sequence. 

\begin{theorem} \label{t:mass_growth_control}
Corresponding to $E_1\in[1,\infty)$ and $\tau\in \left(0,\frac12\right)$, there exist $\Cl[eps]{e_en}=\Cr{e_en}(E_1,\tau) \in (0,1)$ and $K = K(E_1) \in \left(1,\infty\right)$ independent of $\tau$ with the following property. Given $T \in \mathbf{G}(n,k)$, suppose that $\{V_t\}_{t \in \left[-1,0\right]}$ and $\{u(\cdot,t)\}_{t \in \left[-1,0\right]}$ satisfy (A1)-(A4) with $U = \mathrm{C}(T,1)$. Assume also that
\begin{eqnarray}
& \exists\, C > 0 \, \colon \, \spt\|V_t\| \subset \mathrm{C}(T,1) \cap \{|T^\perp(x)| < C\} \qquad \forall\,t\in \left(-1,0\right]\,; \label{hp:Linfty_bound} \\
&\mu_*^2 := \displaystyle\sup_{t \in \left[-1,0\right]} \int_{\mathrm{C}(T,1)} |T^\perp(x)|^2\, d\|V_t\|(x) \leq \Cr{e_en}^2\,; \label{hp:excess_uniformly_small} \\
&  \|V_{-1}\|(\phi_T^2) - \bc  \leq \Cr{e_en}^2\,; \label{hp:in_cl_1disk} \\
&C(u):= \displaystyle\int_{-1}^0\int_{\rC(T,1)} 2\, |u|^2\,\phi_T^2\, d\|V_t\|dt \leq \Cr{e_en}^2\,. \label{hp:small_transport}
\end{eqnarray}
Then,
\begin{equation} \label{e:mass_growth_control}
    \sup_{t\in \left[-\frac12,0\right]} \|V_t\|(\phi_T^2) \leq \bc + K (\mu_*^2 + C(u))\,.
\end{equation}

Furthermore, if 
\begin{equation} \label{hp:tau}
    \sup_{t\in \left[-\tau,0\right]} \|V_t\|(\phi_T^2) - \bc  \geq - \Cr{e_en}^2\,,
\end{equation}

then

\begin{equation} \label{e:energy_estimate}
    \sup_{t\in\left[-\frac12,-2\tau\right]} \left| \|V_t\|(\phi_T^2) - \bc \right| \leq \frac{K}{\tau^3} (\mu_*^2 + C(u))\,.
\end{equation}

\end{theorem}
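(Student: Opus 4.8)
The plan is to adapt the ODE argument of \cite[Section 5]{Kasai-Tone}, sharpening the balance between area-dissipation and $L^2$-height so that the ``waiting time'' needed to absorb the initial excess of area shrinks to zero as the height does. Write $\psi(t):=\|V_t\|(\phi_T^2)$, $D(t):=\int_{\rC(T,1)}\phi_T^2\,|h(V_t,\cdot)|^2\,d\|V_t\|$, and $\eta(t):=\mu_*^2+\int_{\rC(T,1)}2|u|^2\phi_T^2\,d\|V_t\|$, so that $\int_{-1}^0\eta\le\mu_*^2+C(u)$ by \eqref{hp:small_transport}. First I would derive the master differential inequality: testing \eqref{Bra-ineq} with $\varphi=\phi_T^2$, completing the square in $h$, and using Brakke's perpendicularity theorem together with a time-slice Caccioppoli (tilt-excess) bound — the tilt-excess $\int\phi_T^2\|S-T\|^2\,d\|V_t\|$ being absorbed into a small multiple of $D(t)$ plus $c\int|T^\perp(x)|^2\,d\|V_t\|$ via the first-variation identity applied to the field $\phi_T^2\,T^\perp(x)$, and \eqref{hp:excess_uniformly_small} giving $\int|T^\perp(x)|^2\,d\|V_t\|\le\mu_*^2$ — one obtains $\psi'(t)\le -c_0 D(t)+c_1\eta(t)$ for a.e.\ $t$, with $c_0,c_1$ depending only on $n,k$. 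In particular $\psi$ is \emph{almost decreasing}: $\psi(t_2)-\psi(t_1)\le c_1(\mu_*^2+C(u))$ whenever $-1\le t_1<t_2\le 0$.

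\textbf{The nonlinear excess estimate.} The heart of the proof is the bound, for a.e.\ $t$,
\[
 \bigl(\psi(t)-\bc\bigr)_+\ \le\ c_2\,\mu_*\,D(t)^{1/2}\ +\ c_2\,\eta(t)\,,
\]
which says the \emph{excess of area is controlled by the product of height and curvature}. As in \cite[Section 5]{Kasai-Tone}, this is obtained by approximating $V_t$ on $\rC(T,(5/6)^{1/k})$ by a Lipschitz graph of a function $f$ over $T$ — the smallness of $\mu_*$ (from \eqref{hp:Linfty_bound}, \eqref{hp:excess_uniformly_small}) and the density-ratio bound (A2) making this possible off a ``bad'' set where the curvature is large — writing the excess of area on the good set as a Dirichlet energy $\tfrac12\int\phi_T^2|\nabla f|^2$, estimating it by integration by parts and the interpolation $\int\phi_T^2|\nabla f|^2\lesssim\|f\|_{L^2}\bigl(\int\phi_T^2|\Delta f|^2\bigr)^{1/2}$ (with $\|f\|_{L^2}\lesssim\mu_*$ and $\Delta f$ playing the role of $h$), and bounding the area of the bad set by (A2) and $D(t)$.

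\textbf{Proof of \eqref{e:mass_growth_control}: the Riccati mechanism.} Put $v(t):=(\psi(t)-\bc)_+$. Solving the previous inequality for $D(t)$ on $\{v>0\}$ and inserting it into the master inequality gives there a Riccati-type inequality $v'(t)\le -\tfrac{c_3}{\mu_*^2}v(t)^2+c_4\,\eta(t)$, while across $\{v=0\}$ the almost-monotonicity lets $\psi$ exceed $\bc$ by at most $c_1(\mu_*^2+C(u))$ in total. The decisive point is that solutions of the Riccati ODE satisfy
\[
 v(t)\ \le\ \frac{\mu_*^2}{c_3\,(t-t_0)}\ +\ c_5\,(\mu_*^2+C(u))\qquad\text{for all }t>t_0\ge -1\,,
\]
\emph{with no dependence on $v(t_0)$}: the quadratic dissipation makes the bound independent of the initial data. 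Taking $t_0=-1$ yields $\psi(t)-\bc\le K(\mu_*^2+C(u))$ for every $t\in[-\tfrac12,0]$, proving \eqref{e:mass_growth_control} with $K=K(n,k,E_1)$ independent of $\tau$; this is precisely the effect announced in the introduction, whereby the waiting time needed to bring $v$ below a fixed threshold is $O(\mu_*^2)$ and hence vanishes as $\mu_*\to0$.

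\textbf{Proof of \eqref{e:energy_estimate}, and the main obstacle.} The upper bound in \eqref{e:energy_estimate} is immediate from \eqref{e:mass_growth_control} since $\tau<\tfrac12$, so the content is the lower bound $\psi(t)\ge\bc-\tfrac{K}{\tau^3}(\mu_*^2+C(u))$ on $[-\tfrac12,-2\tau]$. Here \eqref{hp:tau} is used: combined with the almost-monotonicity of $\psi$ it already gives $\psi(t)\ge\bc-\Cr{e_en}^2-c_1(\mu_*^2+C(u))$ on all of $[-\tfrac12,-\tau]$, hence, with \eqref{e:mass_growth_control}, the area deviation $|\psi(t)-\bc|$ is small there. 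One then analyses the \emph{deficit} $(\bc-\psi(t))_+$ by running the ODE argument backward in time from the slice in $[-\tau,0]$ supplied by \eqref{hp:tau}. This is the delicate part: Brakke's inequality bounds $\psi$ only from above, so a genuine area deficit (a ``hole'') is not ruled out by \eqref{hp:tau} alone; it must instead be shown incompatible with the small dissipation budget near the end-time, using the deficit form of the nonlinear estimate — a deficit forces curvature, since $\delta V_t\ll\|V_t\|$ prohibits free boundaries — together with the master inequality and the anchor \eqref{hp:tau}. As one moves backward from the anchoring slice the dissipation window shrinks, so the constants degrade, and weighting the error terms $\eta(t)$ against this shrinking window produces the factor $\tau^{-3}$ at $t=-2\tau$. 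This exclusion of a sizeable area deficit near the end-time is the main obstacle; the remaining steps follow \cite[Section 5]{Kasai-Tone} closely, the genuinely new ingredient being the Riccati form of the energy balance.
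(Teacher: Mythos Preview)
Your overall architecture---master inequality, a nonlinear bound relating the area deviation $\hat E=\psi-\bc$ to the curvature $D$ and the height $\mu_*$, then an ODE/blow-up argument run forward for the excess and backward for the deficit---is exactly the paper's. The gap is in the \emph{form} of the nonlinear estimate and, consequently, in your accounting of the exponent on $\tau$.

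You claim $(\psi-\bc)_+\le c_2\,\mu_*\,D^{1/2}+c_2\,\eta$, which after inversion gives the Riccati inequality $v'\le -c_3\mu_*^{-2}v^2+c_4\eta$. But \cite[Section 5]{Kasai-Tone} does not prove this; Proposition~\ref{p:cheap_energy_est} (which is \cite[Proposition 5.2]{Kasai-Tone}) gives only
\[
|\hat E|\ \le\ P_2\bigl(\alpha^{3/2}\mu^{1/2}+\mu^2\bigr)\qquad(k\le2),\qquad
|\hat E|\ \le\ P_2\bigl(\alpha^{2k/(k-2)}+\alpha^{3/2}\mu^{1/2}+\mu^2\bigr)\qquad(k>2),
\]
with $\alpha^2=D$. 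Your integration-by-parts interpolation $\int\phi_T^2|\nabla f|^2\lesssim\|f\|_{L^2}\,\|\phi_T\Delta f\|_{L^2}$ does yield $\mu_*\alpha$ on the \emph{graphical} part, but it does not control the contribution of the bad (non-graphical) set; that is where the weaker exponent $\alpha^{3/2}\mu^{1/2}$ enters in Kasai--Tonegawa's argument. So the ``Riccati form of the energy balance'' you announce as the new ingredient is not actually available.

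This matters because the exponent on $\tau$ in \eqref{e:energy_estimate} is dictated by the ODE, not by the error terms. The paper inverts the $\alpha^{3/2}\mu^{1/2}$ bound via Corollary~\ref{cor:cheap_energy_estimate} to get $D\gtrsim\mu_*^{-2/3}|\hat E|^{4/3}$, hence the $4/3$-power ODE
\[
E'(t)\ \le\ -P\,\mu_*^{-2/3}\,|E(t)|^{4/3},
\]
whose integration over an interval of length $\sim\tau$ gives $|E|\lesssim(\tau^{-1})^{1/(4/3-1)}\mu_*^2=\tau^{-3}\mu_*^2$. Your Riccati would instead give $\tau^{-1}$; the sentence attributing the missing two powers of $\tau$ to ``weighting the error terms $\eta(t)$ against this shrinking window'' has no mechanism behind it. For the deficit, the paper's argument is also cleaner than your sketch: one simply notes that the same $4/3$-ODE forces $-E$ to \emph{blow up} forward in time, so if $-E(t_*)$ exceeded $K\tau^{-3}(\mu_*^2+C(u))$ at some $t_*\le-2\tau$ it could not remain finite (let alone bounded by $\Cr{e_en}^2$, as \eqref{hp:tau} guarantees) at the anchor time $\bar t\in[-\tau,0]$. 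Replace your $\mu_*D^{1/2}$ estimate by Proposition~\ref{p:cheap_energy_est}/Corollary~\ref{cor:cheap_energy_estimate} and the rest of your outline goes through.
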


Before coming to the proof of Theorem \ref{t:mass_growth_control}, we record here the following result, which is \cite[Proposition 5.2]{Kasai-Tone}. 
\begin{proposition} \label{p:cheap_energy_est}
Corresponding to $E_1\in[1,\infty)$ and $\nu \in(0,1)$ there exist $\alpha_2\in(0,1)$, $\mu_1\in(0,1)$, and $P_2\in[1,\infty)$ with the following property. For $T \in \bG(n,k)$ and a unit density varifold $V \in \mathbf{IV}_k (\rC(T,1))$ with finite mass, define
\begin{align}\label{def:mean_curvature}
    \alpha^2 &:= \int_{\rC(T,1)} |h(V,x)|^2\,\phi_T^2(x) \, d\|V\|(x)\,, \\ \label{def:height}
    \mu^2 &:= \int_{\rC(T,1)} |T^\perp(x)|^2\,d\|V\|(x)\,.
\end{align}

Suppose $\spt\|V\|$ is bounded and 
\begin{equation} \label{denrat_prop}
    \|V\|(B_r(x)) \leq \omega_k r^k E_1 \quad \mbox{for all $B_r(x) \subset \rC(T,1)$}\,.
\end{equation}

\begin{itemize}

\item[(A)] If
\begin{equation} \label{case(A)}
    \left| \|V\|(\phi_T^2) - \bc \right| \leq \frac{\bc}{8}\,, \quad \alpha \leq \alpha_2\,, \quad \mbox{and $\mu \leq \mu_1$}\,,
\end{equation}
then we have
\begin{equation} \label{e:cheap_energy_est}
     \left| \|V\|(\phi_T^2) - \bc \right| \leq
     \begin{cases}
     P_2(\alpha^{\frac{2k}{k-2}} + \alpha^{\frac32} \mu^{\frac12} + \mu^2) &\mbox{if $k > 2$}\,, \\
     P_2 (\alpha^{\frac32} \mu^{\frac12} + \mu^2) &\mbox{if $k \leq 2$}\,.
     \end{cases}
  \end{equation}
   
\item[(B)] If, instead
\begin{equation} \label{case(B)}
    \frac{\bc}{8} < \left| \|V\|(\phi_T^2) - \bc \right| \leq (1-\nu) \bc \quad \mbox{and $\mu \leq \mu_1$}
\end{equation}
then $\alpha \geq \alpha_2$.
\end{itemize}
\end{proposition}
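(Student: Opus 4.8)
The plan is to follow the classical two-step scheme behind Allard-type estimates for part (A), together with a soft compactness argument for part (B). For (A), one first proves a Caccioppoli-type inequality bounding the \emph{tilt-excess} of $V$ relative to $T$ in terms of the height-excess $\mu^2$ and the curvature $\alpha^2$; one then compares $\|V\|(\phi_T^2)$ with $\bc$ by a graphical/area-formula argument, controlling the nonlinear error terms through a curvature-concentration covering estimate.

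\emph{Step 1 (tilt-excess).} Insert $X(x):=\phi_T^2(x)\,T^\perp(x)$ into the first variation identity $\int \div_S X\,dV(x,S)=-\int h(V,x)\cdot X(x)\,d\|V\|(x)$ (legitimate since $\spt\|V\|$ is bounded). Writing $\mathbf{q}(S):=\sum_{i}|T^\perp(v_i)|^2\in[0,k]$ for an orthonormal basis $\{v_i\}$ of $S$ — the pointwise ``tilt'', which vanishes exactly when $S=T$ — a direct computation gives $\div_S X=\phi_T^2\,\mathbf{q}(S)+\sum_i(\nabla\phi_T^2\cdot v_i)(T^\perp(x)\cdot v_i)$, and the last sum is bounded pointwise by $|\nabla\phi_T^2|\,|T^\perp(x)|\,\mathbf{q}(S)^{1/2}$. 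Using Young's inequality to absorb a fraction of $\int\phi_T^2\,\mathbf{q}(S)\,dV$ on the left (and $|\nabla\phi|^2\lesssim\phi$ for the chosen $\phi$), Cauchy--Schwarz on the curvature term $\int\phi_T^2\,h\cdot T^\perp(x)\,d\|V\|\le\alpha\mu$, and the density-ratio bound \eqref{denrat_prop} for the remaining $\nabla\phi$-term, one obtains
\[
\mathcal{E}:=\int_{\rC(T,1)}\phi_T^2(x)\,\mathbf{q}(S)\,dV(x,S)\le C\,(\alpha\mu+\mu^2)\le C\,(\alpha^2+\mu^2),
\]
with $C=C(n,k,E_1)$.

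\emph{Step 2 (mass comparison).} Under \eqref{case(A)}, \eqref{denrat_prop}, and Step 1, the standard Lipschitz/graphical approximation for integral varifolds with $L^2$-bounded generalized mean curvature shows that, on a cylinder slightly smaller than $\rC(T,1)$ but still containing $\spt\phi_T$, $\spt\|V\|$ coincides, off a ``bad set'' $Z$, with the graph of a single Lipschitz map $g\colon\Omega\subseteq T\to T^\perp$ of small Lipschitz constant, with $\int_\Omega|\nabla g|^2\lesssim\mathcal{E}$ and $\|V\|(Z)+\mathcal{H}^k(B_1^k\setminus\Omega)\lesssim\alpha^{2k/(k-2)}$ when $k>2$; the exponent arises from a Vitali covering of $Z$ by balls $B_{r_i}$ each carrying $\int_{B_{r_i}}|h|^2\gtrsim r_i^{k-2}$ (so $\sum_i r_i^{k-2}\lesssim\alpha^2$) with all radii $\lesssim\alpha^{2/(k-2)}$, whence $\mathcal{H}^k(Z)\approx\sum_i r_i^k\le(\sup_i r_i^2)\sum_i r_i^{k-2}\lesssim\alpha^{2k/(k-2)}$; for $k\le2$ the criticality of $\int|h|^2$ makes this mechanism degenerate and the corresponding contribution is absorbed into the other terms. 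Since $\phi_T^2$ depends only on $T(x)$, the area formula yields
\[
\|V\|(\phi_T^2)=\int_\Omega\phi^2(|y|)\bigl(1+\tfrac12|\nabla g(y)|^2+O(|\nabla g|^4)\bigr)\,d\mathcal{H}^k(y)+O(\|V\|(Z)),
\]
and since $\bc=\int_{B_1^k}\phi^2$ this gives $|\|V\|(\phi_T^2)-\bc|\le C\bigl(\int_\Omega|\nabla g|^2+\mathcal{H}^k(B_1^k\setminus\Omega)+\|V\|(Z)\bigr)$, the implicit self-referential term $\mathcal{H}^k(B_1^k\setminus\Omega)\lesssim\cdots+|\|V\|(\phi_T^2)-\bc|$ being absorbed thanks to the smallness of $\alpha,\mu$. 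Combining with Step 1 and a short interpolation/bootstrap for $g$ (to sharpen $\alpha\mu$ into the mixed term $\alpha^{3/2}\mu^{1/2}$), one arrives at \eqref{e:cheap_energy_est}.

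\emph{Step 3 (part (B)) and the main difficulty.} Part (B) is by contradiction: if no admissible $\alpha_2,\mu_1$ existed there would be varifolds $V_j$ (with the fixed $E_1$) satisfying \eqref{denrat_prop}, $\mu_j\to0$, $\alpha_j\to0$, and $\bc/8<|\,\|V_j\|(\phi_T^2)-\bc\,|\le(1-\nu)\bc$; by Allard's compactness theorem a subsequence converges to an integral varifold $V_\infty$, and $\mu_j\to0$ forces $\spt\|V_\infty\|\subset T$ in the relevant region, $\alpha_j\to0$ forces $V_\infty$ stationary where $\phi_T>0$, and \eqref{denrat_prop} together with the constancy theorem force $V_\infty=\mathbf{var}(T,m)$ on the connected set $\{\phi_T>0\}$ for some $m\in\{0,1,2,\dots\}$; then $\|V_j\|(\phi_T^2)\to m\,\bc$, so $|m-1|\,\bc\in[\bc/8,(1-\nu)\bc]$, which is impossible since $|m-1|\in\{0\}\cup[1,\infty)$. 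Taking $\alpha_2,\mu_1,P_2$ to be the minima (resp. maximum) of the constants from Steps 2 and 3 finishes the proof. The main obstacle is Step 2: setting up the quantitative graphical approximation, estimating the bad set via the curvature-concentration covering, and extracting the sharp exponent $\alpha^{3/2}\mu^{1/2}$; Steps 1 and 3 are routine.
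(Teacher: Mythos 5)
First, a point of reference: the paper does not prove this proposition at all — it is recorded verbatim as \cite[Proposition 5.2]{Kasai-Tone} and used as a black box — so there is no internal proof to compare against, and your proposal has to stand on its own. Within it, Step 1 (the Caccioppoli/tilt estimate from the first variation with $X=\phi_T^2\,T^\perp(x)$) is correct, and Step 3 (part (B) by Allard compactness, the constancy theorem, and integrality of the limit multiplicity) is also essentially correct, modulo the small caveat that $\phi_T^2\notin C_c(\rC(T,1))$, so one must cut off in the $T^\perp$-directions and control the escaping mass by $\mu_j^2$. Also, since $\alpha\mu\le\alpha^{3/2}\mu^{1/2}+\mu^2$, no ``sharpening'' of the mixed term is needed; that remark is superfluous.

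The genuine gap is Step 2, which is the heart of the proposition and is asserted rather than proved. What you call ``the standard Lipschitz/graphical approximation'' — single-valued graphicality off a bad set $Z$ with $\|V\|(Z)+\Ha^k(B_1^k\setminus\Omega)\lesssim\alpha^{2k/(k-2)}$ — is not a quotable standard lemma: the standard approximation bounds the bad set by the (tilt$+$height) excess and gives no control on the ``holes'' $B_1^k\setminus\Omega$ or on locally two-sheeted regions. Yet the holes are exactly what you must control, since the lower bound $\bc-\|V\|(\phi_T^2)\le\Ha^k(B_1^k\setminus\Omega)+\dots$ is the only route to one half of \eqref{e:cheap_energy_est}; your ``self-referential absorption'' cannot rescue this, because the coefficient multiplying $|\,\|V\|(\phi_T^2)-\bc\,|$ in any such mass bookkeeping is not less than $1$ (and $\phi$ degenerates near the edge of its support). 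Your covering sketch tacitly assumes a scale-by-scale dichotomy ``either graphical at scale $r$, or $r^{2-k}\int_{B_r}|h|^2\ge\epsilon_0$'', i.e.\ an $\varepsilon$-regularity theorem at every scale; but Allard-type $\varepsilon$-regularity also needs smallness of the scaled height/tilt excess and a multiplicity/density hypothesis at that scale, neither of which follows from \eqref{case(A)} and \eqref{denrat_prop} (locally one may have two nearby sheets, folds, or small closed ``bubbles''; a small sphere resting on the plane saturates the exponent $\alpha^{2k/(k-2)}$, so such configurations must be detected and measured, not assumed away). Finally, for $k\le 2$ your mechanism is empty — for $k=2$ the quantity $\int|h|^2$ is scale-invariant and for $k=1$ it never concentrates — and ``absorbed into the other terms'' is not an argument, even though \eqref{e:cheap_energy_est} then requires the bad set and holes to be bounded by $\alpha^{3/2}\mu^{1/2}+\mu^2$ alone. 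In short, the architecture (tilt estimate, graphical mass comparison, compactness for (B)) and the scaling of your exponents are reasonable, but the core quantitative control of holes and overlaps in terms of $\alpha$ and $\mu$ — where the proof in \cite{Kasai-Tone} does its real work — is missing.
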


The following is an immediate corollary of Proposition \ref{p:cheap_energy_est}, and it is \cite[Corollary 5.3]{Kasai-Tone}

\begin{corollary} \label{cor:cheap_energy_estimate}
Let $\alpha_2,\mu_1$, and $P_2$ be as in Proposition \ref{p:cheap_energy_est}. Set $\mu_2 := \min\{\mu_1, \left( \frac{\bc}{32P_2} \right)^{1/2}\}$. For $V$ and $T$ as in Proposition \ref{p:cheap_energy_est}, define $\alpha$ and $\mu$ as in \eqref{def:mean_curvature} and \eqref{def:height}. Also define
\begin{equation} \label{def:mass_excess}
    \hat E := \|V\|(\phi_T^2) - \bc\,.
\end{equation}
Assume \eqref{denrat_prop} as well as
\begin{equation} \label{ass_cor_cheap}
    \mu \leq \mu_2\,, \quad \mbox{and} \quad 2P_2\mu^2 \leq |\hat E| \leq (1-\nu)\bc\,.
\end{equation}
Then, we have
\begin{equation} \label{conclusion:large dissipation}
    \alpha^2 \geq 
    \begin{cases}
    \min\left\lbrace \alpha_2^2, (4P_2)^{-\frac{k-2}{k}}|\hat E|^{\frac{k-2}{k}}, (4P_2)^{-\frac43}\mu^{-\frac23} |\hat E|^{\frac43} \right\rbrace & \mbox{if $k > 2$}\,, \\
    \min\left\lbrace \alpha_2^2, (2P_2)^{-\frac43} \mu^{-\frac23} |\hat E|^{\frac43} \right\rbrace & \mbox{if $k \leq 2$}\,.
    \end{cases}
\end{equation}
\end{corollary}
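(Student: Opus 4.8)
The plan is to derive \eqref{conclusion:large dissipation} directly from the dichotomy (A)/(B) in Proposition \ref{p:cheap_energy_est} by a short case analysis on the size of $\hat E$, followed by elementary manipulations of the resulting inequality \eqref{e:cheap_energy_est}. Since $\mu \leq \mu_2 \leq \mu_1$ by construction, the smallness requirement on $\mu$ demanded by both alternatives of Proposition \ref{p:cheap_energy_est} is automatically in force throughout; the sharper bound $\mu_2 \leq (\bc/(32P_2))^{1/2}$ plays no role in the argument below (it merely ensures $2P_2\mu^2 \leq \bc/16$, which is convenient when the corollary is invoked later on).

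First I would dispose of two immediate situations. If $\alpha \geq \alpha_2$, then $\alpha^2 \geq \alpha_2^2$ and there is nothing to prove, because $\alpha_2^2$ is one of the quantities over which the minimum in \eqref{conclusion:large dissipation} is taken. If instead $|\hat E| > \bc/8$, then, using also $|\hat E| \leq (1-\nu)\bc$ and $\mu \leq \mu_1$ from \eqref{ass_cor_cheap}, alternative (B) of the proposition applies and yields $\alpha \geq \alpha_2$, so again $\alpha^2 \geq \alpha_2^2$ and we are done. Hence it remains only to treat the regime $\alpha < \alpha_2$ and $|\hat E| \leq \bc/8$; in this regime all three requirements in \eqref{case(A)} are met, so alternative (A) gives \eqref{e:cheap_energy_est}.

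From this point the argument is purely algebraic. Using $2P_2\mu^2 \leq |\hat E|$ from \eqref{ass_cor_cheap} to absorb the $P_2\mu^2$ summand of \eqref{e:cheap_energy_est} into the left-hand side, one arrives at $\tfrac12 |\hat E| \leq P_2 \alpha^{2k/(k-2)} + P_2 \alpha^{3/2}\mu^{1/2}$ when $k > 2$, and at $\tfrac12|\hat E| \leq P_2\alpha^{3/2}\mu^{1/2}$ when $k \leq 2$. For $k > 2$ at least one of the two summands is no smaller than $\tfrac14|\hat E|$; isolating $\alpha^2$ in the corresponding inequality produces, respectively, $\alpha^2 \geq (4P_2)^{-(k-2)/k}|\hat E|^{(k-2)/k}$ or $\alpha^2 \geq (4P_2)^{-4/3}\mu^{-2/3}|\hat E|^{4/3}$, where the precise form of the constant in the second case comes from the identity $(16P_2^2)^{2/3} = (4P_2)^{4/3}$. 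For $k \leq 2$ one likewise obtains $\alpha^2 \geq (2P_2)^{-4/3}\mu^{-2/3}|\hat E|^{4/3}$ via $(4P_2^2)^{2/3} = (2P_2)^{4/3}$. In every case $\alpha^2$ bounds below at least one of the terms appearing inside the minimum in \eqref{conclusion:large dissipation}, which is precisely the claim.

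I do not expect any genuine obstacle: as the text indicates, this is an immediate corollary. The only points deserving care are (i) verifying that the correct alternative — (A) or (B) — of Proposition \ref{p:cheap_energy_est} is applicable in each of the regimes $|\hat E| \leq \bc/8$ and $|\hat E| > \bc/8$, and (ii) tracking the exponents accurately when passing from a lower bound on a power of $\alpha$ to a lower bound on $\alpha^2$, so that the displayed constants $(4P_2)^{-(k-2)/k}$, $(4P_2)^{-4/3}$, and $(2P_2)^{-4/3}$ come out exactly as stated.
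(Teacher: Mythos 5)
Your proposal is correct: the dichotomy on $|\hat E|\lessgtr \bc/8$ (invoking alternative (B) to force $\alpha\ge\alpha_2$ in one regime, and alternative (A) plus absorption of the $P_2\mu^2$ term via $2P_2\mu^2\le|\hat E|$ in the other, followed by isolating $\alpha^2$ from whichever summand dominates) is exactly the intended argument, which the paper does not spell out but imports as \cite[Corollary 5.3]{Kasai-Tone}. The exponents and constants $(4P_2)^{-\frac{k-2}{k}}$, $(4P_2)^{-\frac43}$, $(2P_2)^{-\frac43}$ all check out, and your remark that the extra requirement $\mu_2\le(\bc/(32P_2))^{1/2}$ is not used in this derivation is accurate.
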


\begin{proof}[Proof of Theorem \ref{t:mass_growth_control}]

The general scheme follows the proof of \cite[Theorem 5.7]{Kasai-Tone}. We define the function
\begin{equation} \label{e:monotone_energy}
    t \in \left[-1,0\right] \mapsto E(t) := \|V_t\|(\phi_T^2) - \bc - \int_{-1}^t \int_{\rC(T,1)} 2|u|^2\phi_T^2\, d\|V_s\|ds - K_2\mu_*^2 (1+t)\,,
\end{equation}
where 
\[
K_2 := 80\,\sup\{5|\nabla\phi_T|^4 \phi_T^{-2} + |\nabla|\nabla \phi_T||^2\}\,.
\]
Arguing precisely as in the proof of \cite[(5.53)]{Kasai-Tone}, namely by testing Brakke's inequality \eqref{Bra-ineq} with $\varphi = \phi_T^2$, we conclude that
\begin{equation} \label{e:energy_monotonicity}
 E(t_2) - E(t_1) \leq - \frac14 \int_{t_1}^{t_2} \int_{\rC(T,1)} |h(V_t,\cdot)|^2 \, \phi_T^2 \,d\|V_t\|dt \qquad   \mbox{for every $-1 \leq t_1 < t_2 \leq 0$}\,.
\end{equation}
We first prove \eqref{e:mass_growth_control}. Towards a contradiction, suppose that there exists $t_* \in \left[-\frac12,0\right]$ such that 
\begin{equation} \label{hp:contradiction}
    \|V_{t_*}\|(\phi_T^2) - \bc > K (\mu_*^2 + C(u))\,,
\end{equation}
where $1 < K < \infty$ will be chosen later. In particular, from the definition of $E(t)$ we have for every $t \in \left[-1,t_*\right]$ that
\begin{equation} \label{hp:contradiction2}
\|V_t\|(\phi_T^2) - \bc \geq E(t) \overset{\eqref{e:energy_monotonicity}}{\geq} E(t_*) > K(\mu_*^2 + C(u)) - C(u) - K_2\mu_*^2 \geq \frac{K}{2}\mu_*^2 
\end{equation}
if we choose $K \ge \max\{1, 2 K_2\}$. On the other hand, we also have, due to \eqref{e:energy_monotonicity}, \eqref{hp:excess_uniformly_small}, \eqref{hp:in_cl_1disk}, and \eqref{hp:small_transport},
\[
\|V_t\|(\phi_T^2) - \bc \leq E(t) + C(u) + K_2\mu_*^2 \leq E(-1) + C(u) + K_2\mu_*^2 \leq (K_2+2)\,\Cr{e_en}^2 \leq \Cr{e_en} \bc
\]
for $\Cr{e_en}$ suitably small. In particular, if $P_2$ is the constant from Proposition \ref{p:cheap_energy_est} corresponding to $E_1$ and, for instance, $\nu=1/2$, then choosing also $K \ge 4P_2$ we have that
\begin{equation} \label{energy bounds}
    2P_2 \mu_*^2 \leq \|V_t\|(\phi_T^2) -\bc \leq \Cr{e_en} \bc \qquad \mbox{for every $t \in \left[-1,t_*\right]$}\,.
\end{equation}
Hence, we can apply Corollary \ref{cor:cheap_energy_estimate} with $V=V_t$ for all $t \in \left[-1,t_*\right]$, and conclude that for a.e. $t\in\left[-1,t_*\right]$ it holds
\begin{equation} \label{lb_mc}
    \frac14 \,\int_{\rC(T,1)} |h(V_t,\cdot)|^2\phi_T^2\,d\|V_t\| \geq 
    \begin{cases}
    P \min\{1,E(t)^{\frac{k-2}{k}}, \mu_*^{-\frac23}E(t)^{\frac{4}{3}}\} &\mbox{if $k>2$}, \\
    P \min\{1,\mu_*^{-\frac23}E(t)^{\frac{4}{3}}\} &\mbox{if $k\leq 2$}\,,
    \end{cases}
\end{equation}
where
\[
P := \frac{1}{4 \cdot 2^{4/3}} \min\{\alpha_2^2, (4P_2)^{-\frac{k-2}{k}}, (4P_2)^{-\frac43}\}\,,
\]
and $\alpha_2 \in \left(0,1\right)$ is the same constant as in Proposition \ref{p:cheap_energy_est} corresponding to $E_1$ and $\nu = 1/2$. Let us consider the case $k > 2$, as the case $k \leq 2$ is easier and can be treated similarly. Note that, since $\Cr{e_en} < 1$,
\[
P\min\{1,E(t)^{\frac{k-2}{k}}, \mu_*^{-\frac23}E(t)^{\frac{4}{3}}\} = 
\begin{cases}
P & \mbox{if $E(t) \geq 1$},\\
P E(t)^{\frac{k-2}{k}} &\mbox{if $\mu_*^{\frac{2k}{k+6}} \leq E(t) \leq 1$}, \\
P \mu_*^{-\frac23} E(t)^{\frac43} & \mbox{if $E(t) \leq \mu_*^{\frac{2k}{k+6}}$}\,.
\end{cases}
\]
On the other hand, for $t \in \left[-1,t_*\right]$ we have
\[
E(t) \leq E(-1) = \|V_{-1}\|(\phi_T^2) - \bc \leq \Cr{e_en}^2 < 1\,,
\]
so that the first alternative does not occur. Let $\bar t$ be the supremum of $s \in [-1,t_*]$ such that $\mu_*^{\frac{2k}{k+6}} \leq E(t) \leq 1$ for $t \in \left[-1, s \right]$. Then, \eqref{e:energy_monotonicity} and \eqref{lb_mc} imply that the differential inequality $E'(t) \leq - P E(t)^{\frac{k-2}{k}}$ is satisfied a.e. on $\left[-1, \bar t \right]$. Integrating and using \eqref{hp:in_cl_1disk}, we find then that 
\[
\bar t \leq -1 + \frac{k \Cr{e_en}^{\frac{4}{k}}}{2P}\,.
\]
In particular, for $\Cr{e_en}$ suitably small it is $\bar t < -\frac34$. By the monotonicity of $E(t)$, we then have that the differential inequality $E'(t) \leq - P \mu_*^{-\frac23} E(t)^{\frac43}$ is satisfied a.e. on $\left[ \bar t, t_* \right]$, so that, integrating, we find
\begin{equation} \label{diff_ineq_estimate}
E(t_*) \leq \left( \frac{3}{P (t_* - \bar t)} \right)^3 \mu_*^2\,.
\end{equation}
Since $t_*-\bar t \geq 1/4$, \eqref{diff_ineq_estimate} is in contradiction with \eqref{hp:contradiction2} as soon as we choose $K \geq \frac{4}{P^3} 12^3$. This completes the proof of \eqref{e:mass_growth_control}. Assume now that \eqref{hp:tau} holds, and let $\bar t \in \left[-\tau,0\right]$ be such that
\begin{equation} \label{hp:tau2}
    \|V_{\bar t}\|(\phi_T^2) - \bc \geq -\frac32 \Cr{e_en}^2\,.
\end{equation}
Towards a contradiction, assume that \eqref{e:energy_estimate} is violated: due to \eqref{e:mass_growth_control}, this means that there exists $t_* \in \left[-\frac12,-2\tau\right]$ such that 
\begin{equation}  \label{hp:contradiction_lb}
E(t_*) \leq \|V_{t_*}\|(\phi_T^2) - \bc < - \frac{K}{\tau^3}(\mu_*^2+C(u))\,.
\end{equation}
We then have 
\begin{equation} \label{hp:contradiction_lb_2}
E(t) \leq - \frac{K}{\tau^3}(\mu_*^2+C(u)) \qquad \mbox{for every $t \in \left[t_*,\bar t \right]$}
\end{equation}
by monotonicity, and thus
\[
\|V_t\|(\phi_T^2)-\bc \leq E(t) + C(u) + K_2\mu_*^2 \leq - \frac{K}{2} \mu_*^2 \qquad \mbox{for every $t \in \left[t_*,\bar t \right]$}\,.
\]
On the other hand, again for $t \in \left[t_*,\bar t \right]$ we have
\[
\|V_t\|(\phi_T^2) - \bc \ge E(t) \ge E(\bar t) \geq - \left(\frac{5}{2} + K_2\right) \Cr{e_en}^2 \geq -\Cr{e_en} \bc\,,
\]
for $\Cr{e_en}$ sufficiently small, where we have used \eqref{hp:tau2} together with \eqref{hp:excess_uniformly_small} and \eqref{hp:small_transport}. We can then apply again Corollary \ref{cor:cheap_energy_estimate} with $V=V_t$, $t \in \left[t_*,\bar t\right]$, and conclude that for a.e. $t \in \left[t_*,\bar t \right]$
\begin{equation} \label{lb_mc_2_pre}
\begin{split}
    &\frac14 \,\int_{\rC(T,1)} |h(V_t,\cdot)|^2\phi_T^2\,d\|V_t\| \\ & \qquad \geq 
    \begin{cases}
    2^{\frac43} P \min\{1,\left(\bc - \|V_t\|(\phi_T^2)\right)^{\frac{k-2}{k}}, \mu_*^{-\frac23}\left(\bc - \|V_t\|(\phi_T^2)\right)^{\frac{4}{3}}\} &\mbox{if $k>2$}, \\
    2^{\frac43} P \min\{1,\mu_*^{-\frac23}\left(\bc - \|V_t\|(\phi_T^2)\right)^{\frac{4}{3}}\} &\mbox{if $k\leq 2$}\,.
    \end{cases}
\end{split}
\end{equation}

On the other hand, as a consequence of \eqref{hp:contradiction_lb_2} we have that for every $t \in \left[t_*,\bar t \right]$
\[
\bc - \|V_t\|(\phi_T^2) \geq -E(t) - C(u) - K_2 \mu_*^2 \geq -E(t) - K (C(u) + \mu_*^2) \geq (-1+\tau^3) E(t) \geq \frac12 (-E(t))\,,
\]
and thus
\begin{equation} \label{lb_mc_2}
    \frac14 \,\int_{\rC(T,1)} |h(V_t,\cdot)|^2\phi_T^2\,d\|V_t\| \geq 
    \begin{cases}
    P \min\{1,\left(-E(t)\right)^{\frac{k-2}{k}}, \mu_*^{-\frac23}\left(-E(t)\right)^{\frac{4}{3}}\} &\mbox{if $k>2$}, \\
    P \min\{1,\mu_*^{-\frac23}\left(-E(t)\right)^{\frac{4}{3}}\} &\mbox{if $k\leq 2$}\,.
    \end{cases}
\end{equation}
Arguing as above, we only treat the case $k>2$, and we notice that $-E(t)=|E(t)| < 1$. Assume that $\hat t$ is the infimum of $s \in \left[t_*, \bar t \right]$ such that $|E(t)| \geq \mu_*^{\frac{2k}{k+6}}$ for all $t \in \left[s, \bar t\right]$. Then, \eqref{e:energy_monotonicity} and \eqref{lb_mc_2} imply that the differential inequality $E'(t) \leq - P \left(- E(t) \right)^{\frac{k-2}{k}}$ is satisfied a.e. on $\left[\hat t, \bar t \right]$. Integrating we find that 
\[
\frac{2P}{k} (\bar t-\hat t) \leq \left(-E(\bar t)\right)^{\frac{2}{k}} - \left(-E(\hat t)\right)^{\frac{2}{k}} \leq (\Cr{e_en} \bc)^{\frac{2}{k}}\,.
\]
In particular, for $\Cr{e_en}$ sufficiently small (depending on $\tau$) we have $\hat t \in \left[-\frac{3}{2}\tau ,\bar t\right]$. Now, by monotonicity of $E(t)$, it holds $|E(t)| \leq \mu_*^{\frac{2k}{k+6}}$ on $\left[t_*,\hat t \right]$, and thus the differential inequality $E'(t) \leq - P \mu_*^{-\frac23} \left(-E(t)\right)^{\frac43}$ holds a.e. on $\left[t_*,\hat t \right]$. We integrate to find that 
\[
E(t_*) \geq - \left( \frac{3}{P (\hat t - t_*)} \right)^{3} \mu_*^2 \geq - \left( \frac{6}{P \tau} \right)^3 \mu_*^2\,,
\]
which contradicts \eqref{hp:contradiction_lb} if $K \geq 2 (6/P)^3$ and completes the proof of \eqref{e:energy_estimate}.
\end{proof}

\section{Lipschitz approximation} \label{s:Lip_approx}

The following proposition states the existence of a Lipschitz approximation of the flow in space-time, with good estimates up to the end-time. The result is similar
to \cite[Theorem 7.5]{Kasai-Tone}, 
the only difference being that 
the Lipschitz approximation is obtained
up to the end-time. In the next Section
\ref{buarg}, 
$t=0$ in Proposition \ref{lipap}
will correspond to a time slightly before
the end-time, up to which we have a good 
excess estimate. 

\begin{proposition}\label{lipap}
Corresponding to $E_1\in[1,\infty)$, $p$ and $q$, there exist $\Cl[eps]{e3}\in(0,1)$, $r_1\in(0,1)$ and $\Cl[con]{c3}\in[1,\infty)$ with the following property. For $U=\rC(T,1)$, suppose that $\{V_t\}_{t\in[-3/5,0]}$ and $\{u(\cdot,t)\}_{t\in[-3/5,0]}$ satisfy (A1)-(A4). 
Write $V_t={\bf var}(M_t,1)$ for a.e.\,$t$ and identify $T$ with $\mathbb R^k\times\{0\}$.
Suppose that we have
\begin{equation}\label{lipap1}
    \int_{\rC(T,1)\times[-3/5,0]} |h(V_t,\cdot)|^2\phi_T^2\,d\|V_t\|dt
    \leq \Cr{e3} r_1^2/4\,,
\end{equation}
\begin{equation}\label{lipap2}
    \big|\|V_t\|(\phi_T^2)-{\bf c}\big|\leq \Cr{e3} 
    \,\,\,\,\mbox{ for all }t\in[-3/5,0]\,,
\end{equation}
\begin{equation}\label{lipap3}
    {\rm spt} \,\|V_t\|\cap \rC(T,1)\subset \{|T^{\perp}(x)|\leq
    \Cr{e3}\}\,\,\,\mbox{ for all }t\in [-3/5,0]\,,
\end{equation}
\begin{equation}\label{lipap4}
    \|u\|_{L^{p,q}(\rC(T,1)\times[-3/5,0])}\leq 1\,.
\end{equation}
Set 
\begin{equation}\label{lipap5}
    \beta^2:=\int_{G_k(\rC(T,1))\times[-3/5,0]} \|S-T\|^2\phi_T^2\,
    dV_t(\cdot,S)dt
\end{equation}
and
\begin{equation}\label{lipap6}
    \kappa^2:=\left|\int_{-3/5}^0\Big(\|V_t\|(\phi_{T,1/2}^2)-
    \frac{\bf c}{2^k}\Big)\,dt
    \right|\,.
\end{equation}
Then there exist maps $f\,:\, B_{1/3}^k\times [-1/2,0]\rightarrow
\mathbb R^{n-k}$ and $F\,:\,B_{1/3}^k\times[-1/2,0]\rightarrow
\mathbb R^n\times[-1/2,0]$ such that for all $(x,s),\,(y,t)\in B_{1/3}^k
\times[-1/2,0]$, 
\begin{equation}\label{lipap7}
\begin{split}
    &F(x,s)=(x,f(x,s),s)\,, \\
    &|f(x,s)-f(y,t)|\leq c(n,k)\max\{|x-y|,|s-t|^{1/2}\}\,, \\
    &|f(x,s)|\leq \Cr{e3}\,,
\end{split}
\end{equation}
and with the following property. Define 
\begin{equation}\label{lipap8}
\begin{split}&X:=\Big(\cup_{t\in[-1/2,0]} (M_t\cap \rC(T,1/3))\times\{t\}\Big)
\cap {\rm image}\,F\,, \\
&Y:=(T\times {\rm Id}_{\mathbb R} )(X)\,.
\end{split}
\end{equation}
Then
\begin{equation}\label{lipap9}
        (\|V_t\|\times dt)((\rC(T,1/3)\times[-1/2,0])\setminus X) 
    +\mathcal L^{k+1}((B_{1/3}^k\times[-1/2,0])\setminus Y)
    \leq \kappa^2+\Cr{c3}\beta^2\,.
\end{equation}
\end{proposition}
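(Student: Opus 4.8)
The plan is to follow the construction of \cite[Section 7]{Kasai-Tone} almost verbatim, the only structural change being that every parabolic cylinder appearing in the argument is taken of the \emph{backward} type $\tilde P_r(x_0,t_0)=B_r(x_0)\times(t_0-r^2,t_0)$ rather than of the two-sided type $P_r(x_0,t_0)$ used there. This replacement is harmless because the Lipschitz approximation near a space-time point $(x_0,t_0)$ only sees the flow at times $s\le t_0$; consequently, once we fix $r_1\in(0,1)$ small enough that $\tilde P_{r_1}(x_0,t_0)\subset \rC(T,1)\times[-3/5,0]$ for every $(x_0,t_0)\in B_{1/3}^k\times[-1/2,0]$ (e.g.\ $r_1^2\le 1/10$ and $r_1<2/3$), the entire construction --- together with all its estimates --- extends up to and including $t=0$. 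Observe that \eqref{lipap1}--\eqref{lipap4} are imposed on the \emph{closed} interval $[-3/5,0]$, which is what keeps every localized quantity under control as $t_0\uparrow 0$; moreover, by Huisken's monotonicity formula (with the forcing error absorbed via \eqref{lipap4}), the hypotheses \eqref{lipap1}--\eqref{lipap3} guarantee that at every scale $<r_1$ and every time in $[-1/2,0]$ the support of $\|V_t\|$ lies in a thin slab around a $k$-plane and carries density close to $1$.

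\textbf{Good set and graphical part.} For $(x_0,t_0)$ and $0<r<r_1$ set
\[
A_r(x_0,t_0):=r^{-(k+2)}\int_{\tilde P_r(x_0,t_0)}\|S-T\|^2\,dV_t(\cdot,S)\,dt\,,\qquad A^*(x_0,t_0):=\sup_{0<r<r_1}A_r(x_0,t_0)\,,
\]
fix a dimensional threshold $\delta=\delta(n,k)$, and let $G$ be the set of points of $\bigcup_{t\in[-1/2,0]}(M_t\cap\rC(T,1/3))\times\{t\}$ at which $A^*\le\delta$, with $B$ its complement therein. Exactly as in the elliptic Lipschitz approximation, smallness of $A^*$ at a point, together with Brakke's perpendicularity theorem and the slab bound \eqref{lipap3}, forces the support of the flow near that point to be a single graph, Lipschitz in the spatial variables. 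The H\"older-$\tfrac12$-in-time control of this graph follows from a clearing-out argument based on Huisken's monotonicity formula and the density being close to $1$: if the graph moved in the $T^\perp$ direction by more than $c\,|s_2-s_1|^{1/2}$ between times $s_1<s_2$, one would reach a density contradiction at $(x,f(x,s_2),s_2)$ --- and again only backward cylinders and times $\le s_2\le 0$ enter. This yields a parabolic-Lipschitz function on the projection of the good part of the flow, which one extends to all of $B_{1/3}^k\times[-1/2,0]$ keeping the parabolic Lipschitz constant comparable; the result is $f\colon B_{1/3}^k\times[-1/2,0]\to\R^{n-k}$ satisfying \eqref{lipap7}, and we set $F(x,s):=(x,f(x,s),s)$ and take $X,Y$ as in \eqref{lipap8}, so that $X$ is the part of the flow lying over $G$ and on the graph of $f$.

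\textbf{Measure estimate.} By construction $B$ is covered by backward cylinders on which $A_r>\delta$, so a Besicovitch--Vitali covering combined with the density bound (A2) and the weak-$(1,1)$ estimate for the parabolic maximal function gives $(\|V_t\|\times dt)(B)+\mathcal L^{k+1}\big((T\times\mathrm{Id}_\R)(B)\big)\le (c/\delta)\,\beta^2$. Over the good set $G$, the remaining defects --- the part of the flow of $V_t$-multiplicity $\ge 2$ (hence not on the graph of $f$) and the ``holes'', i.e.\ the part of $B_{1/3}^k\times[-1/2,0]$ lying over $G$ but not covered by $Y$ --- are estimated, via the area formula applied to the graph of $f$ and the comparison of $\|V_t\|$ restricted to that graph with $\mathcal L^k$ on its projection, by the absolute integrated measure excess of $\|V_t\|$ in the subcylinder of radius $\sim 1/2$; by the definition \eqref{lipap6} of $\kappa$ this is $\kappa^2$ up to a further $c\beta^2$, the $c\beta^2$ accounting for the area distortion between the tilted graph and its flat projection. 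Adding all contributions gives \eqref{lipap9} for a suitable $\Cr{c3}=\Cr{c3}(n,k,E_1)$.

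\textbf{Main obstacle.} The delicate point is the H\"older-$\tfrac12$-in-time estimate for the graph: it must be obtained purely from Huisken's monotonicity formula, the scale-invariant backward smallness of the excess, and the density being close to $1$, with no access whatsoever to the flow at times $>t_0$, and one has to verify that the clearing-out contradiction can be set up with $t_0$ arbitrarily close to, or equal to, the end-time. This is precisely where the hypotheses \eqref{lipap1}--\eqref{lipap3} being posed on the closed interval $[-3/5,0]$ (rather than on an interval bounded away from $0$) is indispensable, and it is the structural reason why this Lipschitz approximation --- unlike a naive interior estimate --- reaches $t=0$. A minor additional check is that $\delta$, $r_1$ and $\Cr{c3}$ can be chosen uniformly in $(x_0,t_0)$ up to the end-time, which again follows from the uniformity of \eqref{lipap1}--\eqref{lipap4} on $[-3/5,0]$.
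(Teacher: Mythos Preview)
Your overall strategy---follow \cite[Section 7]{Kasai-Tone} verbatim with $P_r$ replaced everywhere by the backward cylinder $\tilde P_r$---is exactly the paper's approach, and your description of the good/bad decomposition and of how $\kappa^2$ enters is accurate. However, you have the emphasis inverted on where the genuine technical issue lies.

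The H\"older-$\tfrac12$-in-time estimate that you flag as the ``main obstacle'' is \emph{not} where the work is: the paper notes that \cite[Proposition 7.1 and Lemma 7.3]{Kasai-Tone} go through with the same proof once $P_r$ is replaced by $\tilde P_r$, because the clearing-out/monotonicity arguments there already only look backward in time. The estimate you worry about is therefore routine.

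What the paper singles out as ``the only essential modification'' is precisely the step you pass over with the phrase ``a Besicovitch--Vitali covering'': the backward cylinders $\tilde P_r(x,s)$ are \emph{not} metric balls for the parabolic distance $d((x_1,s_1),(x_2,s_2))=\max\{|x_1-x_2|,|s_1-s_2|^{1/2}\}$ (the metric ball is the two-sided $P_r$), so the standard Vitali lemma does not apply to the family $\{\overline{\tilde P_{r(x,s)}(x,s)}\}_{(x,s)\in B}$ as stated. Moreover, invoking Besicovitch is itself delicate---the paper records in a Remark that parabolic cylinders fail the ``directionally limited'' hypothesis of \cite[2.8.14]{Federer_book}, so Besicovitch is not available off the shelf either. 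The fix is to rerun the \emph{proof} of the Vitali lemma on the $\tilde P$'s to extract a countable pairwise disjoint subfamily $\{\overline{\tilde P_{r_j}(x_j,s_j)}\}$, and then observe that the five-times-dilated \emph{two-sided} cylinders $\overline{P_{5r_j}(x_j,s_j)}$, intersected with the time slab, still cover $B$; combined with (A2) this yields $(\|V_t\|\times dt)(B)\le c(E_1)\,\delta^{-1}\beta^2$. Your sketch should make this explicit rather than treat the covering as routine.
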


\begin{proof} To be consistent with the notation in \cite[Section 7]{Kasai-Tone},
we change the time intervals $[-3/5,0]$ and $[-1/2,0]$
in the statement above to $[0,1]$ and $[1/4,1]$ respectively in the following, 
which does not change the proof in any essential way. With this
replacement, we discuss the proof. 
We simply describe the exact locations 
where we need to change in \cite[Section 7]{Kasai-Tone} and the equation numbers are those of \cite{Kasai-Tone} in the following for the rest of the proof. 
For \cite[Proposition 7.1]{Kasai-Tone},
one replaces
the 
parabolic cylinder $P_r(a,s)$ in (7.3) and (7.4)
by $\tilde P_r(a,s)$ defined in Section \ref{notation}
and the same conclusion (7.6)
follows by the same proof. 
Next, no change is required in \cite[Lemma 7.3]{Kasai-Tone},
where one obtains a small constant $r_1\in(0,1)$ depending
only on $E_1,\,p$ and $q$. In the proof of \cite[Theorem 7.5]{Kasai-Tone}, one replaces $(1/4,3/4)$ by $(1/4,1)$ and $P$ by $\tilde P$
in (7.58), (7.59), (7.62), (7.65) and (7.66). 
The only essential modification is
the part following (7.66) on the covering argument.
The modified statement (7.66) is the following:
For each $(x,s)\in B$, there exists some $r(x,s)\in(0,r_1)$ such 
that 
\begin{equation*}
    \int_{\overline{\tilde P_{r(x,s)}(x,s)}}\|S-T\|^2\,dV_t(\cdot,S)dt\geq \gamma
    (r(x,s))^{k+2}.
\end{equation*}
This follows from the definition of $A$, (7.58).
Thus $\{\overline{\tilde P_{r(x,s)}(x,s)}\}_{(x,s)\in B}$ is a covering of $B$. Here, unlike $P_r(x,s)$, since
$\tilde P_{r}(x,s)$ is not a metric ball with respect to
the metric $d((x_1,s_1),(x_2,s_2)):=\max\{|x_1-x_2|,|s_1-s_2|^{1/2}\}$, we cannot invoke
the standard Vitali covering lemma as given. 
On the other hand, by
following the same proof of the Vitali lemma 
applied to $\{\overline{\tilde P_{r(x,s)}(x,s)}\}_{(x,s)\in B}$ (see for example \cite[Theorem 3.3]{Simon}), one can prove
that there exists a countable subset
$\{\overline{\tilde P_{r(x_j,s_j)}(x_j,s_j)}\}\subset
\{\overline{\tilde P_{r(x,s)}(x,s)}\}_{(x,s)\in B}$
such that it is pairwise disjoint and 
\begin{equation*}
    B\subset \cup_{(x,s)\in B} \overline{\tilde P_{r(x,s)}(x,s)}
    \subset \cup_{j} (\mathbb R^n\times(0,1])\cap \overline{P_{5r(x_j,s_j)}(x_j,s_j)}.
\end{equation*}
Note that the right-hand side
are the closed metric balls with respect to the parabolic distance. Then, using the above inequality and the property
of the covering, 
\begin{equation*}
\begin{split}
    (\|V_t\|\times dt)(B)&\leq \sum_j (\|V_t\|\times dt)
    ((\mathbb R^n\times(0,1]\cap \overline{P_{5r(x_j,s_j)}(x_j,s_j)}\,) \\
    &\leq \sum_j 5^{k+2}2 E_1 r(x_j,s_j)^{k+2} \\
    &\leq \sum_j 5^{k+2}2E_1\gamma^{-1}\int_{\overline{{\tilde P}_{r(x_j,s_j)}(x_j,s_j)}} \|S-T\|^2\, dV_t(\cdot,t)dt \\
    &\leq 5^{k+2}2E_1\gamma^{-1}\int_{\rC(T,13/24)\times(0,1)}
    \|S-T\|^2\,dV_t(\cdot,S)dt\leq 5^{k+2}2\gamma^{-1}\beta^2.
    \end{split}
\end{equation*}
The rest of the proof is the same. 
\end{proof}
\begin{remark}
In \cite{Kasai-Tone}, the generalized Besicovitch covering theorem in \cite[2.8.14]{Federer_book} was invoked for parabolic cylinders at 
the bottom of page 40. After the publication
of \cite{Kasai-Tone}, Ulrich Menne communicated the second-named author that
the parabolic cylinders do not satisfy the assumption in \cite[2.8.14]{Federer_book} (called directionally $\xi,\,\eta,\,\zeta$ limited), so that the theorem is not 
applicable. However, one can fix the proof in \cite{Kasai-Tone} by 
using the Vitali covering lemma, which holds true for any metric balls, 
instead of using Besicovich. Later it was proved that, even though the precise assumption in \cite{Federer_book} is not satisfied, the Besicovich covering theorem 
still holds true for parabolic cylinders of type
$P$ (not $\tilde P$), see \cite{Itoh} for the proof.
\end{remark}
\section{Blow-up argument}
\label{buarg}
We first state the regularity result for 
a domain which is at positive
distance away from the
end-time $t=0$.
This is a direct consequence of \cite[Theorem 8.7]{Kasai-Tone} with modifications to shorten the
waiting time near the end-time.
\begin{proposition}\label{blow-es}
Corresponding to $E_1\in[1,\infty)$, $\nu\in(0,1)$, $p$, $q$ and 
$\iota\in(0,1/4)$, there exist
$\Cl[eps]{e4}\in(0,1)$, $\Cl[con]{c4}\in(1,\infty)$ with the following property. For $T\in{\bf G}(n,k)$, $R\in(0,\infty)$, $U=\rC(T,2R)$, suppose
$\{V_t\}_{t\in[-R^2,0]}$ and $\{u(\cdot,t)\}_{t\in[-R^2,0]}$ 
satisfy (A1)-(A4)
and \eqref{udef1}-\eqref{udef4} with
$\Cr{e4}$ in place of $\Cr{e1}$.
Write $\tilde D:=(B_R \cap T)\times[-R^2/2,-\iota R^2]$.
Then there are $f\,:\,\tilde D\rightarrow
T^\perp$ and $F\,:\,\tilde D\rightarrow \mathbb R^n$
such that $T(F(y,t))=y$ and $T^{\perp}(F(y,t))=f(y,t)$
for all $(y,t)\in\tilde D$ and 
\begin{equation}\label{f5}
    {\rm spt}\,\|V_t\|\cap \rC(T,R)={\rm image}\,F(\cdot,t)
    \,\,\mbox{ for all }t\in [-R^2/2,-\iota R^2],
\end{equation}
\begin{equation}\label{f6}
   R^{-1} \|f\|_0+\|\nabla f\|_0+R^\zeta[ f]_{1+\zeta}\leq \Cr{c4}(\mu+\|u\|),
\end{equation}
where the norms are measured on $(B_R \cap T)\times[-R^2/2,-\iota R^2]$.
\end{proposition}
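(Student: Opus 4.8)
The plan is to rerun, essentially unchanged, the blow-up and iteration scheme by which \cite[Theorem~8.7]{Kasai-Tone} is proved, performing exactly two substitutions: the Caccioppoli-type estimate \cite[Theorem~5.7]{Kasai-Tone} is replaced by Theorem~\ref{t:mass_growth_control}, and the parabolic Lipschitz approximation \cite[Theorem~7.5]{Kasai-Tone} by Proposition~\ref{lipap} (both of which already absorb the switch from the cylinders $P_r$ to $\tilde P_r$). First I would rescale by $R$ and rotate, reducing to $R=1$, $T=\R^k\times\{0\}$, $U=\rC(T,2)$, and then carry out the standard preliminary reductions of \cite[Section~6]{Kasai-Tone}, combined with the monotonicity-plus-compactness argument sketched in the Introduction: for $\Cr{e4}$ small, \eqref{udef1}, \eqref{udef3} and Huisken's monotonicity formula confine $\spt\|V_t\|$ to an arbitrarily thin tube $\{|T^\perp(x)|<\delta\}$ over a sub-interval, force $V_t$ to be unit density $\mathcal H^k$-a.e.\ on a slightly smaller parabolic region, and make $\bigl|\|V_t\|(\phi_T^2)-\bc\bigr|$ as small as desired for $t$ near the left endpoint. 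After a harmless time shift one may assume all of this holds on $[-1,0]$, so that hypotheses \eqref{hp:Linfty_bound}--\eqref{hp:small_transport} of Theorem~\ref{t:mass_growth_control} are met, with $\mu_*^2$ and $C(u)$ controlled by $C\bigl(\mu^2+\|u\|_{p,q}^2\bigr)$.

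The crucial new input is the choice of the waiting time: given $\iota\in(0,1/4)$ I would fix $\tau:=\iota/2$ and impose $\Cr{e4}\le\Cr{e_en}(E_1,\tau)$, which is legitimate since $\Cr{e4}$ may depend on $\iota$. To apply the refined estimate \eqref{e:energy_estimate} one must check the non-vanishing hypothesis \eqref{hp:tau}, and this is precisely where \eqref{udef2} enters: by \eqref{udef2} there is a point $(x_*,0)$ in $\spt(\|V_t\|\times dt)$ with $x_*\in\overline{\rC(T,\nu)}$, and, since for an integral Brakke flow the Gaussian density is $\ge 1$ at every point of the space-time support, Huisken's monotonicity formula together with the smallness \eqref{udef3} of the $L^2$-height yields $\sup_{t\in[-\tau,0]}\|V_t\|(\phi_T^2)\ge\bc-\Cr{e_en}^2$ once $\Cr{e4}$ is small enough. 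Theorem~\ref{t:mass_growth_control} then gives, on $[-\tfrac12,-2\tau]=[-\tfrac12,-\iota]$,
\[
\bigl|\|V_t\|(\phi_T^2)-\bc\bigr|\ \le\ \tfrac{K}{\tau^3}\bigl(\mu_*^2+C(u)\bigr)\ \le\ C(\iota)\bigl(\mu^2+\|u\|_{p,q}^2\bigr),
\]
in particular $\|V_t\|(\phi_T^2)\ge\bc/2$ there, so the flow does not vanish on $[-\tfrac12,-\iota]$. Coupling this with Proposition~\ref{p:cheap_energy_est}(A), the curvature bound \eqref{e:energy_monotonicity}, and the relation between the mass deficit $\|V_t\|(\phi_T^2)-\bc$ and the tilt used in \cite[Sections~5 and 8]{Kasai-Tone}, I would then bound the space-time tilt-excess $\beta^2$ of Proposition~\ref{lipap} on $[-\tfrac12,-\iota]$ by $C(\iota)\bigl(\mu^2+\|u\|_{p,q}^2\bigr)$. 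This is exactly the estimate that in \cite{Kasai-Tone} was available only after a waiting time dictated by the proof, whose failure near $t=0$ was the obstruction to end-time regularity.

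With that estimate in hand, the remaining steps are those of \cite[Section~8]{Kasai-Tone}: one proves a \emph{decay lemma} -- a single-step improvement of a suitable parabolic excess on a fixed-ratio smaller cylinder -- by the usual compactness contradiction, in which the rescaled parabolic Lipschitz approximations produced by Proposition~\ref{lipap} converge strongly in $L^2$ to a solution of the heat equation; because Theorem~\ref{t:mass_growth_control} supplies the excess control up to within a gap $2\tau$ of the relevant end-time (and the $L^2$-mass of the rescaled graphs over that thin slab is itself controlled by \eqref{e:mass_growth_control} and can be absorbed), the limiting function solves the heat equation on enough of the time interval for the interior linear regularity theory to apply and to produce the required decay. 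Iterating the decay lemma over a sequence of nested parabolic cylinders -- taking $2\tau=\iota$, which is legitimate precisely because $\Cr{e4}$ and $\Cr{c4}$ are allowed to depend on $\iota$ -- yields the $C^{1,\zeta}$ regularity, the graphical representation \eqref{f5}, and the estimate \eqref{f6} on $\tilde D=(B_1\cap T)\times[-\tfrac12,-\iota]$.

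I expect the main obstacle to be twofold: first, converting the qualitative non-vanishing \eqref{udef2} into the quantitative lower bound \eqref{hp:tau}, uniformly in the smallness parameter; and second, the bookkeeping needed to reconcile the $\tau$-dependent (and, as $\iota\to0$, unbounded) constant $K/\tau^3$ of Theorem~\ref{t:mass_growth_control} with the smallness required at every step of the iteration, so that a single $\Cr{e4}=\Cr{e4}(\iota)$ suffices throughout. Everything else should be a faithful transcription of \cite[Sections~6--8]{Kasai-Tone} with $\tilde P_r$ in place of $P_r$.
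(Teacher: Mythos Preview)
Your approach differs substantially from the paper's and, as written, has a gap.

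The paper's proof does \emph{not} use the new tools (Theorem~\ref{t:mass_growth_control} and Proposition~\ref{lipap}) at all. Instead, it applies \cite[Theorem~8.7]{Kasai-Tone} as a black box on small symmetric cylinders. The point is that every $(x,t)\in B_1^k\times[-\tfrac12,-\iota]$ lies at distance at least $\iota$ from the end-time, so for a small radius $\tilde R$ depending only on $\iota$ and the constant $\Lambda_3$ of \cite{Kasai-Tone}, the \emph{two-sided} cylinder $B_{3\tilde R}^k(x)\times(t-\Lambda_3\tilde R^2,\,t+\Lambda_3\tilde R^2)$ sits inside $B_{3/2}^k\times(-\tfrac35,-\tfrac{\iota}{2})$. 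The mass hypotheses \cite[(8.85),(8.86)]{Kasai-Tone} for these small cylinders are verified by a compactness contradiction (Allard's compactness theorem and the clearing-out lemma, with \eqref{udef1} and \eqref{udef2} ruling out multiplicity-two and vanishing limits respectively), not via Theorem~\ref{t:mass_growth_control}. A finite covering of $\tilde D$ then yields \eqref{f5}--\eqref{f6}. The new end-time machinery is reserved for Proposition~\ref{blow-est}.

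The scheme you outline --- a decay lemma built from Theorem~\ref{t:mass_growth_control} and Proposition~\ref{lipap}, then iteration --- is essentially the content of Propositions~\ref{blow-est} and~\ref{p:las}, not of this one. More to the point, your iteration over nested backward-only cylinders $\tilde P_r$, all terminating at $t=0$, shrinks toward the origin and produces regularity on a paraboloid $\{|x|^2<|t|\}$, not on the slab $(B_1\cap T)\times[-\tfrac12,-\iota]$: after finitely many steps the iterates lie entirely in $\{t>-\iota\}$ and say nothing about points such as $(x,-\iota)$ with $|x|$ near $1$. To cover the slab you would have to recenter the entire argument at each such point --- and once you are recentering at interior-in-time points, the original \cite[Theorem~8.7]{Kasai-Tone} already applies directly, without any of the end-time refinements.
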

\begin{proof}
We may assume that $R=1$ by the parabolic change of variables.
We first use the $L^2-L^\infty$ height estimate \cite[Proposition 6.4]{Kasai-Tone} with 
$R=1$, $\Lambda=1$, $U=B_1(a)$ with $a\in T\cap B_1$ (and the time-interval $[0,1]$ translated to $[-1,0]$), so that 
there exist $\Cl[con]{c5}=\Cr{c5}(k,p,q)$ and $\Cl[con]{c6}=\Cr{c6}(n,k)$ such 
that, for all $t\in[-4/5,0]$, we have
\begin{equation}\label{non3}
    {\rm spt}\,\|V_t\|\cap B_{4/5}(a)
    \subset \{x\,:\,|T^{\perp}(x)|\leq \tilde\mu\},
\end{equation}
where
\begin{equation}\label{nonsup2}
    \tilde\mu^2:=\Cr{c6}\mu^2+3\Cr{c5}
   \|u\|^2 E_1^{1-\sfrac{2}{p}}.
\end{equation}
In particular, by moving $a$ within $T\cap B_1$, \eqref{non3} shows
\begin{equation}\label{non4}
    {\rm spt}\,\|V_t\|\cap \rC(T,3/2)\cap \{x\,:\,|T^{\perp}(x)|\leq 1/2\}\subset \{x\,:\,|T^\perp(x)|\leq \tilde \mu\}
\end{equation}
for all $t\in[-4/5,0]$. Using the lower density
ratio bound (see \cite[Corollary 6.3]{Kasai-Tone}), for
all sufficiently small $\Cr{e4}$ depending only on $E_1$, $p$ and $q$, one can show that
\begin{equation}\label{non5}
    {\rm spt}\,\|V_t\|\cap \rC(T,3/2)\cap\{x\,:\,
    |T^\perp(x)|>1/2\}=\emptyset
\end{equation}
for all $t\in[-4/5,0]$. 
Thus, \eqref{non4} and \eqref{non5} show
\begin{equation}\label{nonsup}
   {\rm spt}\,\|V_t\|\cap \rC(T,3/2)
   \subset \{x\,:\,|T^\perp(x)|\leq \tilde \mu\}
\end{equation}
for all $t\in[-4/5,0]$.
Next, we use \cite[Theorem 8.7]{Kasai-Tone}. 
Corresponding to
$E_1$, $p$ and $q$ with $\nu=1/2$, there exist $\Cl[eps]{e5}\in(0,1)$
($\varepsilon_6$ in \cite{Kasai-Tone}),
$\sigma\in (0,1/2)$, $\Cl[Lam]{lam1}\in(2,\infty)$ ($\Lambda_3$ in
\cite{Kasai-Tone}) and $\Cl[con]{c7}\in(1,\infty)$ ($c_{16}$
in \cite{Kasai-Tone})
with the properties stated there. We identify $T$
with $\mathbb R^k\times\{0\}$ in the following. We fix a small $0<\tilde R\leq 1/6$ depending only on $\iota$ and $\Cr{lam1}$ 
(for example, $\tilde R=\sqrt{\iota/(4\Cr{lam1})}$)
so that, for any $(x,t)\in 
B^k_1\times[-1/2,-\iota]$, we have
\begin{equation}
    B^k_{3\tilde R}(x)\times(t-\Cr{lam1}\tilde R^2,t+\Cr{lam1}\tilde R^2)\subset
B^k_{3/2}\times(-3/5,-\iota/2).
\end{equation} 
The choice of such $\tilde R$ depends 
ultimately only on $\iota$, $E_1$, $p$ and $q$. We use 
\cite[Theorem 8.7]{Kasai-Tone} with $R=\tilde R$ and $(x,t)\in B_1^k\times[-1/2,-\iota]$ as the origin. There are four assumptions in \cite[Theorem 8.7]{Kasai-Tone}, the smallness of 
height \cite[(8.83)]{Kasai-Tone} and $\|u\|$ \cite[(8.84)]{Kasai-Tone}, and 
the existence of $t_1$ and $t_2$ in \cite[(8.85)]{Kasai-Tone} and \cite[(8.86)]{Kasai-Tone}
with respect to $B^k_{3\tilde R}(x)\times(t-\Cr{lam1}\tilde R^2,t+\Cr{lam1}\tilde R^2)$ and $\nu=1/2$. 
The first two conditions are fulfilled if we restrict
$\Cr{e4}$ so that $\Cr{e4} \tilde R^{-(k+4)/2}<\Cr{e5}$. In the following, we prove that the latter two are satisfied by using a compactness 
argument. 
Let $\phi_{T,\tilde R,x}$ be defined
by $\phi_{T,\tilde R,x}(y):=\phi_{T,\tilde R}(y-x)$. We 
claim that, given any $\delta>0$, for all sufficiently 
small $\Cr{e4}>0$ depending only on 
$\iota,\,E_1,\,\nu,\,p,\,q$ and $\delta$, we have
\begin{equation}\label{ab3}
   \tilde R^{-k} \|V_t\|(\phi_{T,\tilde R,x}^2)\leq {\bf c}+\delta
\end{equation}
for all $(x,t)\in B^k_1\times [-3/5,0]$. Note that, by 
using the monotone decreasing property of $E(t)$ corresponding to 
$\phi_{T,\tilde R,x}$ in place of $\phi_T$ in \eqref{e:energy_monotonicity},
the increase of $\|V_t\|(\phi_{T,\tilde R,x}^2)$
in time can be made small by restricting $\mu$ and
$\|u\|$ appropriately depending on $\delta$ and $\tilde R$
(in the following, we may refer to this fact as ``almost monotone 
property''),
so we need to prove $\tilde R^{-k}\|V_{-3/5}\|(\phi_{T,\tilde R,x}^2)\leq {\bf c}+\delta$ for all $x\in B^k_1$.
Assume for a contradiction that there exist
$\{V_t^{(m)}\}_{t\in[-1,0]}$ and $\{u^{(m)}(\cdot,t)\}_{t\in[-1,0]}$ satisfying the assumptions of the present theorem with $\varepsilon=1/m$, and 
$x_m\in B_1^k$ such that 
$\tilde R^{-k}\|V_{-3/5}^{(m)}\|(\phi_{T,\tilde R,x_m}^2)>{\bf c}+\delta$. 
Again by the almost monotone property, we have 
\begin{equation}\label{ab1}
\inf_{t\in[-4/5,-3/5]}\tilde R^{-k}\|V_t^{(m)}\|(\phi_{T,\tilde R,x_m}^2)\geq {\bf c}+\delta/2
\end{equation}
for all large $m$. Since \begin{equation*}\int_{-4/5}^{-3/5}\int_{\rC(T,3/2)}|h(V^{(m)}_t,\cdot)|^2\,d\|V^{(m)}_t\|dt
\end{equation*}
is uniformly bounded by \eqref{e:energy_monotonicity} and (A2), using Fatou's lemma and (A1) we conclude that for almost all $t_0\in[-4/5,-3/5]$, 
there exists a subsequence $V_{t_0}^{(m_j)}\in {\bf IV}_k(\rC(T,2))$
such that the $L^2(\|V_{t_0}^{(m_j)}\|)$-norms of $\{h(V_{t_0}^{(m_j)})\}_j$ are bounded
uniformly in $\rC(T,3/2)$. Then, by Allard's compactness 
theorem of integral varifolds, a further subsequence 
converges to $V\in{\bf IV}_k(\rC(T,3/2))$, and due to \eqref{nonsup}, 
it is supported on $T$. Since the $L^2$-norm
of the generalized mean curvature is lower-semicontinuous
under varifold convergence, $V$ has $h(V,\cdot)\in L^2(\|V\|)$
in $\rC(T,3/2)$ and the multiplicity of $V$ on $T$ has to be
a constant function with integer value, and by \eqref{ab1}, the integer
has to be $\geq 2$. But this implies that
$\liminf_{j\rightarrow\infty}\|V_{t_0}^{(m_j)}\|
(\phi_T^2)\geq \|V\|(\phi_T^2)\geq 2{\bf c}$. 
Since $t_0\geq -4/5$ and by the almost monotone property, one can
obtain a contradiction to \eqref{udef1} for all large $m_j$.
This proves \eqref{ab3}. Similarly, we claim
that, given $\delta>0$, for small $\Cr{e4}>0$,
\begin{equation}\label{ab2}
    \tilde R^{-k}\|V_t\|(\phi_{T,\tilde R, x}^2)\geq 
    {\bf c}-\delta
\end{equation}
for all $(x,t)\in B_1^k\times [-3/5,-\iota/2]$. 
Again by the almost monotone property, we need to 
prove the claim at $t=-\iota/2$. The similar contradiction argument 
applied to the time interval $[-\iota/2,-\iota/4]$
in place of $[-4/5,-3/5]$ (with the same notation) shows that, for almost all $t_0\in[-\iota/2,-\iota/4]$,
there exists a subsequence such that $\lim_{j\rightarrow\infty}
\|V_{t_0}^{(m_j)}\|=0$ on $\rC(T,3/2)$. But then, with the 
clearing-out lemma (see \cite[Corollary 6.3]{Kasai-Tone}),
one can show that
$(\|V_t^{(m_j)}\|\times dt)(\rC(T,1)\times(-\iota/8,0))=0$ for all large $j$ (where $\iota$ needs to be smaller than 
a constant depending only on $k,n,p,q$ and $E_1$ for the clearing-out
lemma). 
This is a contradiction to \eqref{udef2}. This proves \eqref{ab2}.
Now we are ready to apply \cite[Theorem 8.7]{Kasai-Tone}:
we choose a small $\delta>0$ so that ${\bf c}-\delta>{\bf c}/2$
and ${\bf c}+\delta<3{\bf c}/2$ and 
let $\Cr{e4}$ be restricted
so that we have \eqref{ab3} and \eqref{ab2}.
Then for each $T^{-1}(B^k_{3\tilde R}(x))\times(t-\Cr{lam1}\tilde R^2,t+\Cr{lam1}\tilde R^2)$
with $(x,t)\in B_1^k\times[-1/2,-\iota]$, 
all the assumptions for \cite[Theorem 8.7]{Kasai-Tone} are
satisfied. Thus the support of $\|V_t\|$ can be represented
as the graph of a $C^{1,\zeta}$ function in $T^{-1}(B^k_{\sigma\tilde R}(x))
\times (t-\tilde R^2/4,t+\tilde R^2/4)$ with estimate
in terms of $\mu$ and $\|u\|$. Since $\rC(T,1)\times[-1/2,-\iota]$ can be 
covered by a finite number of such domains, the
support of the flow is represented as a $C^{1,\zeta}$ 
graph over $B^k_1\times[-1/2,-\iota]$ with estimates
in terms of $\mu$ and $\|u\|$. The resulting
constant $\Cr{c4}$ depends only on $E_1,\,\nu,\,p,\,q,\,\iota$.
This concludes the proof.
\end{proof}
The constants in the claim of Proposition \ref{blow-es} deteriorate as
$\iota$ approaches to $0$, and we will use it
with a fixed $\iota$ depending only on $E_1$, $\nu$ and 
$\zeta$
in Proposition \ref{p:las}. We next prove
the main decay estimate under the parabolic
dilation centered at the end-time, which 
will be iterated to obtain the desired $C^{1,\zeta}$
estimate.
\begin{proposition}\label{blow-est}
Corresponding to $E_1\in[1,\infty)$, $\nu\in(0,1)$, $p$ and $q$ there exist
$\Cl[eps]{e6}\in(0,1)$, $\theta\in(0,1/4)$ and $\Cl[con]{c8}\in(1,\infty)$
with the following property. For $W\in{\bf G}(n,k)$,
$0<R<\infty$ and $U=\rC(W,2R)$, suppose that $\{V_t\}_{t\in[-R^2,0]}$ and $\{u(\cdot,t)\}_{t\in[-R^2,0]}$
satisfy (A1)-(A4). 
Suppose 
\begin{equation}\label{bl1}
    T\in{\bf G}(n,k)\,\,\mbox{satisfies}\,\,\|T-W\|<\Cr{e6}\,,
\end{equation}
\begin{equation}\label{bl2}
    A\in{\bf A}(n,k)\,\,\mbox{is parallel to}\,\,T\,,
\end{equation}
\begin{equation}\label{bl3}
    \mu:=\Big(R^{-k-4}\int_{-R^2}^0\int_{\rC(W,2R)}
    {\rm dist} \,(x,A)^2\,d\|V_t\|dt\Big)^{1/2}<\Cr{e6}\,,
\end{equation}
\begin{equation}\label{bl4}
    \|u\|:=R^\zeta\|u\|_{L^{p,q}(\rC(W,2R)\times(-R^2,0))}<\infty\,,
\end{equation}
\begin{equation}\label{bl5}
    (\rC(W,\nu R)\times\{0\})\cap {\rm spt}\,(\|V_t\|\times dt)\neq \emptyset\,,
\end{equation}
\begin{equation}\label{bl6}
    R^{-k}\|V_{-4R^2/5}\|(\phi_{W,R}^2)\leq (2-\nu){\bf c}\,.
\end{equation}
Then there are $\tilde T\in{\bf G}(n,k)$ and $\tilde A\in 
{\bf A}(n,k)$ such that 
\begin{equation}\label{bl7}
    \tilde A\mbox{ is parallel to }\tilde T\,,
\end{equation}
\begin{equation}\label{bl8}
    \|T-\tilde T\|\leq \Cr{c8}\mu\,,
\end{equation}
\begin{equation}\label{bl9}
    \left((\theta R)^{-(k+4)}\int_{-(\theta R)^2}^0
    \int_{\rC(W,2\theta R)}
    {\rm dist}\,(x,\tilde A)^2\,d\|V_t\|dt\right)^{1/2}
\leq \theta^{\zeta}\max\{\mu,\Cr{c8}\|u\|\}\,.
\end{equation}
Moreover, if $\|u\|<\Cr{e6}$, we have
\begin{equation}\label{bl10}
    (\theta R)^{-k}\|V_{-4(\theta R)^2/5}\|(\phi^2_{W,\theta R})\leq (2-\nu){\bf c}\,.
\end{equation}
\end{proposition}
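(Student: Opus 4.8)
The proof is a blow-up (compactness) argument, following closely the scheme behind the interior decay step of \cite[Section 8]{Kasai-Tone}; the only genuinely new feature is that the time interval on which we linearize reaches the end-time $t=0$, which is made possible by using Theorem \ref{t:mass_growth_control} and Proposition \ref{lipap} in place of their counterparts in \cite{Kasai-Tone} and by letting the ``end-time waiting time'' $\tau$ shrink to $0$ along the blow-up sequence. After a parabolic rescaling and a rotation I would assume $R=1$ and $W=\R^k\times\{0\}$, fix $\nu,E_1,p,q$, choose $\theta\in(0,1/4)$ at the very end (from the linear decay estimate) together with $\Cr{c8}$ as in \cite{Kasai-Tone}, and argue by contradiction: suppose that for every $\Cr{e6}$ there are $\{V_t^{(m)}\}$, $\{u^{(m)}\}$, $T_m\in\bG(n,k)$ and $A_m\in\mathbf{A}(n,k)$ satisfying \eqref{bl1}--\eqref{bl6} with $\Cr{e6}=1/m$ for which no $\tilde T,\tilde A$ realize \eqref{bl7}--\eqref{bl9} (respectively \eqref{bl10}). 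Then $\mu_m\to 0$ and $\|u^{(m)}\|\to 0$; after recentring so that $A_m$ is the plane of best $L^2$-fit and passing to a subsequence, $T_m\to W$. Write $\hat E_m:=\max\{\mu_m,\|u^{(m)}\|\}\to 0$.

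The first task is to check the hypotheses of Theorem \ref{t:mass_growth_control} and of Proposition \ref{lipap} for $V^{(m)}$ on $[-1,0]$, respectively $[-3/5,0]$, for all large $m$. The $L^2$--$L^\infty$ height estimate \cite[Proposition 6.4]{Kasai-Tone} together with the clearing-out lemma gives $\spt\|V_t^{(m)}\|\subset\{\dist(x,A_m)\le c(\mu_m+\|u^{(m)}\|)\}$ for $t\in[-4/5,0]$, which supplies the height smallness in \eqref{lipap3} and in \eqref{hp:Linfty_bound}--\eqref{hp:excess_uniformly_small}, with the relevant quantities $O(\hat E_m^2)$. The non-vanishing \eqref{bl5}, the mass bound \eqref{bl6}, the dichotomy of Proposition \ref{p:cheap_energy_est} and the energy monotonicity \eqref{e:energy_monotonicity} force $\|V_t^{(m)}\|(\phi_W^2)$ to lie within $o(1)$ of $\bc$ on $[-1/2,0]$, so that \eqref{hp:in_cl_1disk} and $C(u^{(m)})=o(\hat E_m^2)$ hold and — crucially — the hypothesis \eqref{hp:tau} of Theorem \ref{t:mass_growth_control} is satisfied for a suitable $\tau=\tau_m$. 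Here I would exploit the fact that $\Cr{e_en}=\Cr{e_en}(E_1,\tau)$ in Theorem \ref{t:mass_growth_control} depends on $\tau$: since $\mu_*^2+C(u^{(m)})\lesssim\hat E_m^2\to 0$, I may pick $\tau_m\to 0$ so slowly that both the $\Cr{e_en}(E_1,\tau_m)$-conditions are met for large $m$ and $\tau_m^{-3}(\mu_*^2+C(u^{(m)}))\to 0$. Theorem \ref{t:mass_growth_control} then gives $\sup_{[-1/2,-2\tau_m]}\bigl|\|V_t^{(m)}\|(\phi_W^2)-\bc\bigr|=o(\hat E_m^2)$, whence by \eqref{e:energy_monotonicity} also $\int_{-1/2}^{-2\tau_m}\int|h(V_t^{(m)},\cdot)|^2\phi_W^2\,d\|V_t^{(m)}\|dt=o(\hat E_m^2)$, and then (via a Caccioppoli-type bound as in \cite[Section 5]{Kasai-Tone}) the tilt-excess $\beta_m^2=O(\hat E_m^2)$. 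With these inputs Proposition \ref{lipap} applies on $[-1/2,-2\tau_m]$ and produces parabolic Lipschitz approximations $f_m\colon B_{1/3}^k\times[-1/2,-2\tau_m]\to\R^{n-k}$, graphs of the flow off a ``bad set'' of measure $\kappa_m^2+\Cr{c3}\beta_m^2=o(\hat E_m^2)$.

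Now I would blow up. Setting $g_m:=f_m/\hat E_m$, the preceding estimates bound $\{g_m\}$ in the natural parabolic energy space on $B_{1/3}^k\times(-1/2,0)$ (the time interval now fills out because $\tau_m\to 0$), so along a subsequence $g_m\to g$ strongly in $L^2_{\mathrm{loc}}$ and weakly in the energy norm, and passing to the limit in Brakke's inequality \eqref{Bra-ineq} along the good sets, exactly as in \cite[Section 8]{Kasai-Tone}, shows that $g$ solves the heat equation $\partial_t g=\Delta g$ componentwise on $B_{1/3}^k\times(-1/2,0)$ (with an inhomogeneous forcing in the appropriate class when $\|u^{(m)}\|$ dominates along the subsequence, handled as in \cite{Kasai-Tone}); moreover $\|g\|_{L^2}\le 1$ after normalization. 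The key structural point is that a caloric function is smooth in the interior and up to the \emph{final} time of its parabolic domain (the only singular time being the initial one, which here sits at distance $1/2$), so the first-order spatial Taylor polynomial $\ell(x)=g(0,0)+\nabla g(0,0)\cdot x$ of $g$ at $(0,0)$ satisfies
\[
\theta^{-(k+4)}\int_{-\theta^2}^{0}\int_{B_{2\theta}^k}|g(x,t)-\ell(x)|^2\,dx\,dt\le C_0\,\theta^2\,\|g\|_{L^2}^2\,,
\]
for a dimensional constant $C_0$. This $\ell$ determines $\tilde T_m$ (the graph of $\hat E_m\nabla g(0,0)$ over $W$) and the parallel affine plane $\tilde A_m$, and the tilt bound \eqref{bl8} is obtained as in \cite{Kasai-Tone} (where the contribution of the forcing to $\nabla g(0,0)$ is accounted for separately from the excess part). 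Transferring the displayed decay back to $V^{(m)}$ through the bad-set estimate \eqref{lipap9} and the smallness of its complement gives
\[
\theta^{-(k+4)}\int_{-\theta^2}^{0}\int_{\rC(W,2\theta)}\dist(x,\tilde A_m)^2\,d\|V_t^{(m)}\|dt\le\bigl(C_0\theta^2+o(1)\bigr)\hat E_m^2\,,
\]
and choosing $\theta$ with $C_0\theta^{2-2\zeta}\le\tfrac12$ contradicts the failure of \eqref{bl9} for all large $m$, since $\hat E_m^2\le\max\{\mu_m,\Cr{c8}\|u^{(m)}\|\}^2$. Finally, \eqref{bl10} follows from the almost-monotonicity of $t\mapsto\|V_t\|(\phi_{W,\theta R}^2)$ (apply \eqref{e:energy_monotonicity} with $\phi_{W,\theta R}$ in place of $\phi_T$) together with the just-proved smallness of the excess at scale $\theta R$, which keeps $\|V_t^{(m)}\|(\phi_{W,\theta R}^2)$ within $o(1)$ of $\bc<(2-\nu)\bc$ at $t=-4(\theta R)^2/5$.

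The main obstacle is precisely the control near the end-time. One must (i) choose $\tau_m\to 0$ at exactly the right rate so that Theorem \ref{t:mass_growth_control} still applies — its smallness threshold $\Cr{e_en}$ degenerates as $\tau\to 0$, which is why one needs $\hat E_m\to 0$ fast relative to $\tau_m$ — and (ii) ensure that the $L^2$-mass of the rescaled Lipschitz approximations carried by a shrinking neighbourhood of $t=0$ is negligible, so that the blow-up limit $g$ is genuinely caloric up to $t=0$ and the linear decay estimate may be used on $[-\theta^2,0]$ rather than on a time interval bounded away from $0$. Once these two points are in place, the argument is a routine adaptation of \cite[Sections 7--8]{Kasai-Tone}.
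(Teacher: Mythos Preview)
Your overall blow-up scheme is correct, but there is a quantitative slip that you should fix, and the paper's route is noticeably simpler than the one you outline.

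The slip is your claim that Theorem \ref{t:mass_growth_control} yields $\sup_{[-1/2,-2\tau_m]}\bigl|\|V_t^{(m)}\|(\phi_W^2)-\bc\bigr|=o(\hat E_m^2)$ and hence that $\kappa_m^2+\Cr{c3}\beta_m^2=o(\hat E_m^2)$. The conclusion \eqref{e:energy_estimate} only gives a bound of order $K\tau_m^{-3}\hat E_m^2$, and since $\tau_m\to 0$ this is \emph{not} $o(\hat E_m^2)$; the same goes for the mean-curvature and bad-set estimates. Fortunately this does not break the final transfer: on the bad set one has the $L^\infty$ height bound $\dist(\cdot,\tilde A_m)\le c\,\hat E_m$, so its contribution to the scale-$\theta$ excess is $\le c\,\hat E_m^2\cdot O(\tau_m^{-3}\hat E_m^2)=o(\hat E_m^2)$ as long as $\tau_m^{-3}\hat E_m^2\to 0$; the strip $(-2\tau_m,0)$ contributes $\le c\,\tau_m\theta^{-4}\hat E_m^2=o(\hat E_m^2)$; and the good-set convergence $g_m\to g$ in $L^2$ on the growing intervals $[-\theta^2,-2\tau_m]$ follows from strong convergence on fixed compacta together with the uniform $L^\infty$ bound on $g_m$. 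So your argument can be repaired, but the intermediate claims need to be stated with the correct $\tau_m^{-3}$ loss.

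The paper avoids this entire complication by keeping $\tau$ \emph{fixed}, equal to $\theta^6/2$. Theorem \ref{t:mass_growth_control} is applied once with this $\tau$, giving the clean bound $(\mu^{(m)})^{-2}\bigl|\|V_t^{(m)}\|(\phi_T^2)-\bc\bigr|\le c(\theta)$ on the fixed interval $[-3/5-\theta^6,-\theta^6]$; Proposition \ref{lipap} is applied on that same fixed interval; the blow-up limit is caloric on $B^k_{1/3}\times(-1/2-\theta^6,-\theta^6]$; and the Taylor expansion is taken at the interior point $(0,-\theta^6)$, yielding decay on $[-\theta^2,-\theta^6]$. The remaining thin strip $(-\theta^6,0)$ is then handled \emph{directly} by the $L^\infty$ height bound $\dist(\cdot,A^{(m)})\le c\,\mu^{(m)}$ combined with (A2): its contribution to the scale-$\theta$ excess is at most $c\,E_1\,\theta^{-(k+4)}\cdot\theta^6\cdot\theta^k\cdot(\mu^{(m)})^2=c_{11}\theta^2(\mu^{(m)})^2$, the choice $\tau\sim\theta^6$ being exactly what makes this crude term the same order $\theta^2$ as the caloric decay. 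One then chooses $\theta$ so that $(c_{10}+c_{11})\theta^2<\theta^{2\zeta}/2$. This avoids growing domains, diagonal subsequences, and the need to push the caloric limit all the way to $t=0$. A further simplification in the paper is that the contradiction hypothesis (with $\tilde T=T$) immediately forces $\|u^{(m)}\|/\mu^{(m)}\to 0$, so one normalizes by $\mu^{(m)}$ throughout and the limiting equation is the homogeneous heat equation, with no forced case to treat separately.
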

\begin{proof}
We may assume that $R=1$ after a parabolic change of variables.
The outline of proof is similar to \cite[Proposition 8.1]{Kasai-Tone}, with the crucial difference that we work with 
\eqref{bl5} and that the result is for a domain centered at the end-time point $(x,t)=(0,0)$. We give
a description on the different points on the proof for this result. The proof proceeds by contradiction.
We will fix $\theta\in(0,1/4)$ later depending only on $E_1$
and $\zeta$. If the claim were false, then, for each $m\in\mathbb N$ there exist $\{V_t^{(m)}\}_{t\in[-1,0]}$, $\{u^{(m)}(\cdot,t)\}_{t\in[-1,0]}$
satisfying (A1)-(A4) on $\rC(W^{(m)},2)\times[-1,0]$ for $W^{(m)}\in{\bf G}(n,k)$ such that, by assuming $T=\mathbb R^k\times\{0\}$ after 
suitable rotation,
\begin{equation}\label{til1}
    \|T-W^{(m)}\|\leq 1/m,
\end{equation}
\begin{equation}
    \mu^{(m)}:=\Big(\int_{-1}^0\int_{\rC(W^{(m)},2)} |T^{\perp}(x)|^2\,d\|V_t^{(m)}\|dt\Big)^{1/2}\leq 1/m,
\end{equation}
\eqref{bl5} and \eqref{bl6}, but for any $\tilde T\in{\bf G}(n,k)$ with
$\|T-\tilde T\|\leq m\mu^{(m)}$ and $\tilde A\in{\bf A}(n,k)$ 
which is parallel to $\tilde T$, we have
\begin{equation} \label{non1}
    \Big(\theta^{-(k+4)}\int_{-\theta^2}^0\int_{\rC(W^{(m)},2\theta)}
{\rm dist}\,(x,\tilde A)^2\,d\|V_t^{(m)}\|dt\Big)^{1/2}>\theta^\zeta\max\{\mu^{(m)},m\|u^{(m)}\|\}.
\end{equation}
By taking $\tilde A=\tilde T=T$ in \eqref{non1},
we obtain
\begin{equation*}
    \theta^\zeta\|u^{(m)}\|<\theta^{-(k+4)/2}m^{-1}
    \mu^{(m)},
\end{equation*}
which shows in particular that
\begin{equation}\label{non2}
    \lim_{m\rightarrow\infty} (\mu^{(m)})^{-1}
    \|u^{(m)}\|=0.
\end{equation}
By \eqref{non2}, \eqref{nonsup2} and \eqref{nonsup}, we have
\begin{equation}\label{non7}
    \limsup_{m\rightarrow\infty} \Big\{ \frac{|T^{\perp}(x)|}{\mu^{(m)}}\,:\, x\in {\rm spt}\|V_t^{(m)}\|\cap \rC(T,1)\Big\}\leq \sqrt{\Cr{c6}}
\end{equation}
for all $t\in[-4/5,0]$, where $\sqrt{\Cr{c6}}=c(n,k)$.
The same argument used to prove \eqref{ab3} combined with \eqref{bl6} shows
\begin{equation}\label{non11}
    \limsup_{m\rightarrow\infty}\|V_{-7/10}^{(m)}\|(\phi_T^2)
    \leq {\bf c}.
\end{equation}
Using \eqref{til1} and the similar argument leading 
to \eqref{ab1}, one can prove that
\begin{equation}\label{non6}
    \liminf_{m\rightarrow\infty} \|V_{-\theta^6/2}^{(m)}\|
    (\phi_T^2)\geq {\bf c}.
\end{equation}
Then, with \eqref{non7}-\eqref{non6}, for all
sufficiently large $m$, we may 
apply Theorem \ref{t:mass_growth_control}
with $\tau=\theta^6/2$.
Thus there exists a constant $\Cl[con]{c9}=\Cr{c9}(\theta,\nu,p,q,E_1)$ such that
\begin{equation}\label{non12}
   \limsup_{m\rightarrow\infty} \Big(\sup_{t\in[-3/5-\theta^6,-\theta^6]}
    (\mu^{(m)})^{-2}\big|\|V_t^{(m)}\|(\phi_T^2)-{\bf c}\big|\Big)\leq \Cr{c9}.
\end{equation}
We now apply Proposition \ref{lipap} with 
the time interval shifted from $[-3/5,0]$ to $[-3/5-\theta^6,-\theta^6]$. 
For all sufficiently large $m$, note that
\eqref{lipap2}-\eqref{lipap4} are all satisfied due to \eqref{non12}, \eqref{non7} and \eqref{non2}. The smallness condition of \eqref{lipap1} 
can be proved by (A4) and \eqref{non12} as it was done for \eqref{e:energy_monotonicity}.
Thus we have Lipschitz functions $f^{(m)}$ and
$F^{(m)}$ defined on $B^k_{1/3}\times[-1/2-\theta^6,-\theta^6]$
with quantities \eqref{lipap5} and \eqref{lipap6} 
defined in terms of $V^{(m)}$ and where $f^{(m)}$
and $F^{(m)}$ satisfy \eqref{lipap7}-\eqref{lipap9}.
Once we achieve this, arguing exactly 
as in \cite[p.45]{Kasai-Tone}, one can
prove that the right-hand side of \eqref{lipap9}
corresponding to $V^{(m)}$ can be bounded by 
$c(\mu^{(m)})^2$ with $c$ depending only on $\theta,\,\nu,\,E_1,\,
p,\,q$. We define the blowup sequence by 
\begin{equation}
    {\tilde f}^{(m)}:=f^{(m)}/\mu^{(m)}
\end{equation}
for all sufficiently large $m$ on $B^k_{1/3}\times[-1/2-\theta^6,-\theta^6]$. 
Writing $\Omega':=B^k_{1/3}\times(-1/2-\theta^6,-\theta^6]$,
the verbatim proof for 
\cite[Lemma 8.3,\,8.4]{Kasai-Tone}
gives the existence of a subsequence 
$\{\tilde f^{(m_j)}\}$ and $\tilde f\in C^{\infty}(\Omega')$ such that
\begin{equation}
    \lim_{j\rightarrow\infty}\|\tilde f^{(m_j)}-\tilde f\|_{L^2(\Omega')}=0
    \,\,\,\mbox{ and }\,\,\,
    \frac{\partial\tilde f}{\partial t}-\Delta 
    \tilde f=0\,\,\mbox{ on }\Omega'.
\end{equation}
At this point, it is important to note that
\eqref{non7} gives
\begin{equation}
    \|\tilde f\|_{L^\infty(\Omega')}\leq \sqrt{\Cr{c6}}\,,
\end{equation}
where $\Cr{c6}=c(n,k)$. We then define $T^{(m)}\in{\bf G}(n,k)$
as the graph of the map
\[
x \in \mathbb{R}^k \mapsto \mu^{(m)}\nabla \tilde f(0,-\theta^6) \cdot x \in \mathbb{R}^{n-k}\,,
\]
which is the tangent space
to the graph $\{(x,\mu^{(m)}\tilde f(x,-\theta^6)) \, \colon \, x \in B^k_{1/3}\}$ at $x=0$, 
and also define the affine plane $A^{(m)}\in{\bf A}(n,k)$
by $A^{(m)}=T^{(m)}+(0,\mu^{(m)}\tilde f(0,-\theta^6))$.
By the standard estimates for parabolic PDE, all the partial
derivatives of $\tilde f$ 
on $B^k_{2\theta}\times[-\theta^2,-\theta^6]$
are bounded in terms of constant multiple of $\sqrt{\Cr{c6}}$. 
In particular, there exists a constant $\Cl[con]{c10}=c(n,k)$ such that
\begin{equation}
\int_{B_{2\theta}\times[-\theta^2,-\theta^6]}
|\tilde f(x,t)-\tilde f(0,-\theta^6)-\nabla\tilde f
(0,-\theta^6)\cdot x|^2\,d\mathcal H^k\leq \Cr{c10}\theta^{k+6}.
\end{equation}
Following the verbatim proof in \cite{Kasai-Tone},
this leads to 
\begin{equation}
\begin{split}
   &\limsup_{m\rightarrow\infty} \|T-T^{(m)}\|\leq \Cr{c10}, \\
   & \limsup_{m\rightarrow\infty} (\mu^{(m)})^{-2}
   \int_{\rC(T,2\theta)\times(-\theta^2,-\theta^6)}
   {\rm dist}\,(x,A^{(m)})^2\,d\|V_t^{(m)}\|dt\leq \Cr{c10}
   \theta^{k+6}.
\end{split}
\end{equation}
Thus, for all large $m$, we have
\begin{equation}\label{non13}
   \theta^{-(k+4)}
   \int_{\rC(T,2\theta)\times(-\theta^2,-\theta^6)}
   {\rm dist}\,(x,A^{(m)})^2\,d\|V_t^{(m)}\|dt\leq \Cr{c10}
   \theta^{2} (\mu^{(m)})^{2}.
\end{equation}
On the integral over the time interval $(-\theta^6,0)$, 
since ${\rm dist}\,(x,A^{(m)})\leq c(\Cr{c10})\mu^{(m)}$
on the support of $\|V_t^{(m)}\|$, combined with (A2), we have
\begin{equation}\label{non14}
    \theta^{-(k+4)}\int_{\rC(T,2\theta)\times(-\theta^6,0)}
   {\rm dist}\,(x,A^{(m)})^2\,d\|V_t^{(m)}\|dt\leq \Cl[con]{c11}
   \theta^{2} (\mu^{(m)})^{2}
\end{equation}
where $\Cr{c11}$ depends only on $\Cr{c10}$ and $E_1$. Then \eqref{non13}
and \eqref{non14} show
\begin{equation}\label{non15}
   \theta^{-(k+4)}
   \int_{\rC(T,2\theta)\times(-\theta^2,0)}
   {\rm dist}\,(x,A^{(m)})^2\,d\|V_t^{(m)}\|dt\leq (
   \Cr{c10}+
   \Cr{c11})
   \theta^{2} (\mu^{(m)})^{2}. 
\end{equation}
Now, choosing $\theta$ small enough depending only on 
$n,\,k,\,E_1,\zeta$, we may assume that $(\Cr{c10}+
   \Cr{c11})\theta^2
<\theta^{2\zeta}/2$. Since $T$ can be 
replaced by $W^{(m)}$ for the limit (see \cite{Kasai-Tone}) in \eqref{non15}, we have a contradiction to \eqref{non1}. This completes the proof of claims \eqref{bl7}-\eqref{bl9}. For
\eqref{bl10}, since $\theta$ is fixed, we may argue as for the proof of \eqref{ab3} and restrict
$\Cr{e6}$ to make sure that \eqref{bl10} holds. 
This completes the proof. 
\end{proof}
It is possible to apply Proposition \ref{blow-est} iteratively; in combination with Proposition \ref{blow-es}, we have then the following.
\begin{proposition} \label{p:las}
Corresponding to $E_1\in[1,\infty)$, $\nu\in(0,1)$,
$p$ and $q$, there exist $\Cl[eps]{e7}\in(0,1)$ and $\Cl[con]{c-fin} \in (1,\infty)$  with the following property. For
$T\in{\bf G}(n,k)$, $R\in(0,\infty)$ and $U=\rC(T,2R)$,
suppose that $\{V_t\}_{t\in[-R^2,0]}$ and 
$\{u(\cdot,t)\}_{t\in[-R^2,0]}$ satisfy (A1)-(A4). 
Suppose
\begin{equation}\label{las1}
    \mu:=\Big(R^{-k-4}\int_{-R^2}^0
    \int_{\rC(T,2R)}|T^\perp(x)|^2\,d\|V_t\|dt\Big)^{1/2}<\Cr{e7}\,,
    \end{equation}
\begin{equation}\label{las2}
    \|u\|:=R^\zeta \|u\|_{L^{p,q}(\rC(T,2R)\times(-R^2,0))}<\Cr{e7}\,,
\end{equation}
\begin{equation}\label{las3}
    (T^{-1}(0)\times\{0\})\cap{\rm spt}(\|V_t\|\times dt)\neq \emptyset\,,
\end{equation}
\begin{equation}\label{las4}
    R^{-k}\|V_{-4R^2/5}\|(\phi_{T,R}^2)\leq (2-\nu){\bf c}\,.
\end{equation}
Identifying $T$ as $\mathbb R^k\cong\mathbb R^k\times\{0\}\subset\mathbb R^n$, 
let $\tilde D:=\{(x,t)\in \mathbb R^k\times[-R^2/2,0)\,:\, |x|^2<
|t|\}$. Then there exist $f\,:\,\tilde D
\rightarrow T^{\perp}$ and $F\,:\,\tilde D\rightarrow\mathbb R^n$ such 
that $F(x,t)=(x,f(x,t))$ for $(x,t)\in\tilde D$ and 
\begin{itemize}
    \item[(1)] ${\rm spt}\|V_t\|\cap \rC(T,\sqrt{|t|})={\rm Image}\,
    F(\cdot,t)\,\,\mbox{for all }\, t\in[-R^2/2,0)$,
    \item[(2)]
    $R^{-1}\|f\|_0+\|\nabla f\|_0+R^\zeta[ f]_{1+\zeta}\leq \Cr{c4}\Cr{c-fin}\max\{\mu,\Cr{c8}\|u\|\}$.
\end{itemize}
\end{proposition}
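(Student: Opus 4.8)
The plan is to iterate Proposition~\ref{blow-est} at the parabolic scales $\theta^jR$ about the end-time point $(0,0)$ and then to splice together the $C^{1,\zeta}$ graphs produced at all those scales by Proposition~\ref{blow-es}. All implicit constants below depend only on $n,k,E_1,\nu,p,q$, and after a parabolic rescaling we may assume $R=1$; write $\Lambda_0:=\max\{\mu,\Cr{c8}\|u\|\}$. Set $\tilde T_0:=T$, $\tilde A_0:=T$, $\mu_0:=\mu$, and inductively, having found $\tilde T_j\in{\bf G}(n,k)$ and $\tilde A_j\in{\bf A}(n,k)$ parallel to $\tilde T_j$ with normalized scale‑$\theta^j$ excess $\mu_j^2:=(\theta^j)^{-(k+4)}\int_{-(\theta^j)^2}^0\int_{\rC(T,2\theta^j)}\ddist(x,\tilde A_j)^2\,d\|V_t\|dt$, apply Proposition~\ref{blow-est} to the flow rescaled parabolically at scale $\theta^j$, with $W=T$ and with $\tilde T_j,\tilde A_j$ in the roles of $T,A$; this yields $\tilde T_{j+1},\tilde A_{j+1}$ with $\|\tilde T_{j+1}-\tilde T_j\|\le\Cr{c8}\mu_j$ and the excess decay $\mu_{j+1}\le\theta^\zeta\max\{\mu_j,\Cr{c8}\|u\|_j\}$, where $\|u\|_j\le\theta^{j\zeta}\|u\|$ by monotonicity of the $L^{p,q}$‑norm in the domain and the scaling of the $\zeta$‑weight. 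One must check that hypotheses \eqref{bl3}--\eqref{bl6} persist at each step: \eqref{bl6} at scale $\theta^j$ is exactly conclusion \eqref{bl10} of the previous step, \eqref{bl4} holds with $\|u\|_j$, and \eqref{bl3} follows from the recursion, which by an immediate induction gives $\mu_j\le\theta^{j\zeta}\Lambda_0$ (using $\Lambda_0\ge\Cr{c8}\|u\|$), hence $\|\tilde T_j-T\|\le\Cr{c8}\Lambda_0/(1-\theta^\zeta)$, which is $<\Cr{e6}$ once $\Cr{e7}$ is chosen small enough; likewise $\{\tilde A_j\}$ is Cauchy, converging to $\tilde A_\infty\parallel\tilde T_\infty:=\lim_j\tilde T_j$, with $\ddist(0,\tilde A_j)\lesssim\mu_j\theta^j=\theta^{j(1+\zeta)}\Lambda_0$. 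The point special to this situation that keeps the iteration running is that the non‑vanishing hypothesis \eqref{bl5} is \emph{automatic}: since $W=T$, the axis $T^{-1}(0)$ is common to all cylinders $\rC(T,\nu\theta^j)$, and \eqref{las3} places a point of $\spt(\|V_t\|\times dt)$ on it.

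Next I run Proposition~\ref{blow-es} at each scale $\theta^j$, with reference plane $\tilde T_j$ and with the waiting‑time parameter fixed once and for all at $\iota:=\theta^4/2$ (so its constants $\Cr{e4},\Cr{c4}$ depend only on the data). Its hypotheses \eqref{udef1}--\eqref{udef4} follow from the above: the excess with respect to the \emph{plane} $\tilde T_j$ is $\mu_j$ plus the contribution $\ddist(0,\tilde A_j)(\theta^j)^{-1}\lesssim\mu_j$ of the offset, hence $\lesssim\mu_j$; \eqref{udef1} follows from \eqref{bl10} after the harmless replacement of $\phi_T$ by $\phi_{\tilde T_j}$, which costs only $O(\Lambda_0)(\theta^j)^k$ of mass by the height bound and is absorbed by slightly shrinking $\nu$; \eqref{udef2} is the same axis point; \eqref{udef4} is $\|u\|_j$. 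With $\iota=\theta^4/2$ the intervals $I_j:=[-(\theta^j)^2/2,-\iota(\theta^j)^2]$ overlap consecutively and their union is $[-1/2,0)$, so for each $j$ Proposition~\ref{blow-es} gives a $C^{1,\zeta}$ graph $f_j\colon(B_{\theta^j}\cap\tilde T_j)\times I_j\to\tilde T_j^\perp$ with $\spt\|V_t\|\cap\rC(\tilde T_j,\theta^j)=\mathrm{image}\,(y\mapsto y+f_j(y,t))$ and $(\theta^j)^{-1}\|f_j\|_0+\|\nabla f_j\|_0+(\theta^j)^\zeta[f_j]_{1+\zeta}\le\Cr{c4}\,C\,\theta^{j\zeta}\Lambda_0$.

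The substance of the proof is the splicing. Since $\|\tilde T_j-T\|\lesssim\Lambda_0$ is as small as desired, each $f_j$ re‑expresses as a graph over $T$, and on the overlap $I_j\cap I_{j+1}$ the two descriptions cut out the same set $\spt\|V_t\|$, so they patch into a single $f\colon\tilde D\to T^\perp$ with $F(x,t)=(x,f(x,t))$; because $\sqrt{|t|}<\theta^j$ for $t\in I_j$, the height bound ensures $\rC(T,\sqrt{|t|})$ sits well inside the region where $f_j$ is defined in either set of coordinates, whence $\spt\|V_t\|\cap\rC(T,\sqrt{|t|})=\mathrm{image}\,F(\cdot,t)$ -- conclusion (1). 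Since $\|f_j\|_0\le\Cr{c4}\,C\,\theta^{j(1+\zeta)}\Lambda_0\to0$, the surface point over the $T$‑origin at time $t\in I_j$ lies within $O(\|f_j\|_0)$ of $0\in\R^n$, so $f$ extends continuously to $(0,0)$ with $f(0,0)=0$ and $\nabla f(0,0):=\lim_{t\to0^-}\nabla f(0,t)$ exists (the slope of $\tilde T_\infty$ over $T$), and $\|f\|_0+\|\nabla f\|_0\lesssim\Lambda_0$. Put $g(y,t):=f(y,t)-\nabla f(0,0)\cdot y$: a short computation on each slab $I_j$, using that the graph's tangent planes lie within $\theta^{j\zeta}\Lambda_0$ of $\tilde T_j$ and $|f(0,t)|\lesssim\|f_j\|_0$, gives the Campanato‑type decay $|g(y,t)|\lesssim\theta^{j(1+\zeta)}\Lambda_0\sim\Lambda_0|t|^{(1+\zeta)/2}$ and $\|\nabla g\|_0\lesssim\theta^{j\zeta}\Lambda_0$ on $\tilde D\cap\{t\in I_j\}$. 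Since $f$ and $g$ differ by a time‑independent affine function, $[f]_{1+\zeta}=[\nabla g]_\zeta+\sup_{(y,s_1),(y,s_2)\in\tilde D}|g(y,s_1)-g(y,s_2)|\,|s_1-s_2|^{-(1+\zeta)/2}$, and each term is $\lesssim\Lambda_0$: for points in the same or in two consecutive (hence overlapping) slabs one uses the within‑slab bound $[f_j]_{1+\zeta}\lesssim\Lambda_0$ together with the coordinate change, and for points in slabs $I_{j_1},I_{j_2}$ with $j_2\ge j_1+2$ one uses that the parabolic distance between them is $\gtrsim\theta^{j_1}$ while the increment of $\nabla g$, resp.\ of $g$, is $\lesssim\theta^{j_1\zeta}\Lambda_0$, resp.\ $\lesssim\theta^{j_1(1+\zeta)}\Lambda_0$. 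Collecting everything yields conclusion (2) with a constant of the form $\Cr{c4}\Cr{c-fin}$, where $\Cr{c-fin}$ absorbs $C$ and the geometric‑series factors.

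I expect the main obstacle to be precisely this splicing step: turning scale‑by‑scale excess decay into a genuine $C^{1,\zeta}$ function on the degenerate parabolic region $\tilde D$ that reaches the corner $(0,0)$, while keeping track of the anisotropic parabolic seminorms and of the reference‑plane change at each scale. The decisive mechanism is that subtracting the affine part of $f$ at the corner produces a function $g$ enjoying the full parabolic $C^{1,\zeta}$ decay, so that cross‑scale Hölder comparisons reduce to comparing a decaying quantity at the coarser of the two scales against the corresponding power of the parabolic distance. By comparison, propagating the hypotheses through the iteration is largely bookkeeping, made pleasant by the fact that $W=T$ renders \eqref{bl5} cost‑free.
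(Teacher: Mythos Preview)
Your approach is the paper's: iterate Proposition~\ref{blow-est} to get $\mu_j\le\theta^{j\zeta}\Lambda_0$ and $\|\tilde T_j-T\|\lesssim\Lambda_0$, then apply Proposition~\ref{blow-es} on each dyadic time slab (the paper uses $\iota=\theta^2/2$, your $\iota=\theta^4/2$ works just as well) and patch the resulting graphs over $T$, estimating the H\"older seminorm by comparing across scales.

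There is one genuine slip. The bound $\ddist(0,\tilde A_j)\lesssim\theta^{j(1+\zeta)}\Lambda_0$ is not justified and is in general false: hypothesis \eqref{las3} only gives a limit point $(p,0)$ of the space--time support with $p\in T^{-1}(0)=T^\perp$, and while the $L^\infty$ height bound at scale $\theta^j$ yields $\ddist(p,\tilde A_j)\lesssim\theta^j\mu_j$, one only has $|p|\lesssim\Lambda_0$, so $\ddist(0,\tilde A_j)\le|p|+\ddist(p,\tilde A_j)$ need not decay. Hence the excess with respect to the \emph{linear} plane $\tilde T_j$ through the origin is $\sim\mu_j+|p|/\theta^j$, which blows up; you cannot feed $\tilde T_j$ into Proposition~\ref{blow-es} as written. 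The paper avoids this by applying Proposition~\ref{blow-es} with the \emph{affine} plane $A_j$ as reference (after translating so that $A_j$ passes through the new origin, at the cost of a slightly smaller cylinder), and then re-expressing the resulting graph over $T$ via the implicit function theorem; the tilt $\|T_j-T\|\lesssim\Lambda_0$ affects the norms only by a bounded factor. With this modification your splicing and H\"older argument go through. A related correction: $f$ need not satisfy $f(0,0)=0$ (indeed $\lim_{t\to0^-}f(0,t)=p$), so in your Campanato step you should subtract the full affine map $y\mapsto f(0,0^-)+\nabla f(0,0^-)\cdot y$ rather than just the linear part; this is harmless for the seminorm estimate but necessary for the decay $|g|\lesssim\theta^{j(1+\zeta)}\Lambda_0$ you invoke.
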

\begin{proof} We may set $R=1$ without loss of generality. 
With $E_1$, $\nu$, $p$ and $q$ given, we use Proposition \ref{blow-est} to obtain $\Cr{e6}$, $\theta$ and $\Cr{c8}$.
Setting $\iota=\theta^2/2$, we use Proposition \ref{blow-es}
to obtain $\Cr{e4}$ and $\Cr{c4}$. We choose $\Cr{e7}$ so that
\begin{equation}\label{eps6}
    \Cr{e7} \leq \min\{\Cr{e4}, \Cr{e6}\}\,,
\end{equation}
\begin{equation}\label{eps6-1}
    \Cr{c8}\Cr{e7}<\Cr{e6}\,,
\end{equation}
\begin{equation}\label{eps6-2}
    (\Cr{c8})^2(1-\theta^\zeta)^{-1}\Cr{e7}<\Cr{e6}\,.
\end{equation}
We first use Proposition \ref{blow-est} with $W=A=T$, and 
note that \eqref{bl1}-\eqref{bl6} are satisfied due to \eqref{las1}-\eqref{las4} and \eqref{eps6}. Thus there 
exist $T_1\in{\bf G}(n,k)$ and $A_1\in{\bf A}(n,k)$ 
such that \eqref{bl7}-\eqref{bl9} are satisfied with 
$R=1$, $W=T$, $\tilde A=A_1$ and $\tilde T=T_1$. 
Similarly, we may use Proposition
\ref{blow-es} since \eqref{udef1}-\eqref{udef4}
are satisfied with $R=1$ and $\Cr{e4}$,
so that we have $f_1$ and $F_1$ defined on $B_1^k\times[-1/2,-\theta^2/2]$ satisfying \eqref{f5} and \eqref{f6}. We next claim that Proposition \ref{blow-est} can
be inductively used for $R=\theta^j$, $j\in\mathbb N$, where
we obtain $T_j\in{\bf G}(n,k)$ and $A_j\in{\bf A}(n,k)$ satisfying
\begin{equation}\label{las5}
    \|T_j-T_{j-1}\|\leq \Cr{c8} \theta^{(j-1)\zeta}\max\{\mu,\Cr{c8}\|u\|\},
\end{equation}
where $T_0:=T$, and writing $\mu_j$ as $\mu$ in \eqref{bl3} corresponding to $A_j$ and $R=\theta^j$,
\begin{equation}\label{las6}
    \mu_j\leq \theta^{j\zeta}\max\{\mu,\Cr{c8}\|u\|\}.
\end{equation}
The case $j=1$ follows from Proposition \ref{blow-est}. 
Assume that it is true until $j\geq 1$. 
Then we check that \eqref{bl1}-\eqref{bl6} are true for
$W=T$, $T=T_j$, $A=A_j$ and $R=\theta^j$. We have
\begin{equation}\label{las8}
    \|T_j-T\|\leq \sum_{l=1}^j\|T_{l}-T_{l-1}\|\leq 
    \Cr{c8}\sum_{l=1}^j \theta^{(l-1)\zeta}\max\{\mu,\Cr{c8}\|u\|\}
    \leq (\Cr{c8})^2(1-\theta^\zeta)^{-1}\Cr{e7}<
    \Cr{e6}\end{equation}
where we used \eqref{las5}, \eqref{las1}, \eqref{las2} and
\eqref{eps6-2}. Thus \eqref{bl1} is satisfied. Since $A_j$ 
and $T_j$ are parallel, \eqref{bl2} is fine. By 
\eqref{eps6}, \eqref{eps6-1} and \eqref{las6}, we have
$\mu_j<\Cr{e6}$, so that \eqref{bl3} is satisfied.
The condition \eqref{bl5} follows from \eqref{las3}, and
\eqref{eps6}, \eqref{las2} and \eqref{bl10} give \eqref{bl6} 
for $j$. Thus, we may apply Proposition \ref{blow-est} with
$R=\theta^j$, and obtain $T_{j+1}$ and $A_{j+1}$ which are parallel
and 
\begin{equation}
    \|T_{j+1}-T_j\|\leq \Cr{c8}\mu_j\leq \Cr{c8}\theta^{j\zeta}\max\{\mu,\Cr{c8}\|u\|\}, 
\end{equation}
where we used \eqref{las6}, and 
\begin{equation}
    \mu_{j+1}\leq \theta^\zeta\max\{\mu_j,\theta^{j\zeta}\Cr{c8}\|u\|\}\leq \theta^{(j+1)\zeta}\max\{\mu,\Cr{c8}\|u\|\}
\end{equation}
by \eqref{bl9} and \eqref{las6}. This closes the inductive step 
and proves \eqref{las5} and \eqref{las6} for all $j$. 
We next prove that we can apply Proposition \ref{blow-es}
on each domain $\rC(T_j,2\theta^j)\times[-\theta^{2j},-\theta^{2(j+1)}/2]$ for all
$j\geq 1$. Note that for each $j\geq 0$, by the same
argument leading up to \eqref{nonsup}, we have
\begin{equation}\label{las7}
\begin{split}
    {\rm spt}\|V_t\|\cap \rC(T,3\theta^j/2)\}&\subset\{x\,:\,
    \theta^{-2j}{\rm dist}(x,A_j)^2\leq \Cr{c6} \mu_j^2+3 \Cr{c5}\theta^{2j\zeta}\|u\|^2
    E_1^{1-2/p}\} \\
    &\subset\{x\,:\,{\rm dist}(x,A_j)\leq \theta^{j(1+\zeta)}\Cl[con]{c-1}
    \Cr{e7}\} \,\,\, (\Cr{c-1}=\Cr{c-1}(n,k,E_1))
    \end{split}
\end{equation}
for all $t\in[-4\theta^{2j}/5,0)$.
To apply Proposition \ref{blow-es}, we need to have $T$ there 
replaced by $A_j$, so we need to tilt the plane whose tilt is 
estimated by \eqref{las8}. For this reason, we may actually 
need to use a slightly smaller cylinder than $C(T_j,2\theta^j)$ 
so that the smallness of corresponding $\mu_j$ (with respect to the distance function to $A_j$)
can be assured from \eqref{las6}. Inductively, we know that
the support of $\|V_t\|$ in 
$\rC(T_{j-1},\theta^{j-1})\times[-\theta^{2(j-1)}/2,-\theta^{2j}/2]$
is a $C^{1,\zeta}$ graph, so that the condition \eqref{udef1} is satisfied. Condition \eqref{udef2} follows from \eqref{las3},
and \eqref{udef3} follows from \eqref{las6},
\eqref{eps6} and \eqref{eps6-1}. 
Thus we may apply Proposition \ref{blow-es} 
and obtain
a graph representation $\tilde f_j$ over $A_j$ with the 
$C^{1,\zeta}$ estimate of the form $\Cr{c4}\theta^{j\zeta}
\max\{\mu,\Cr{c8}\|u\|\}$. Note that, by the implicit function 
theorem, one can equally represent the same set as a graph 
$f_j$ over $T$. The norm $\|\nabla f_j\|_0$ over
$B_{\theta^j}^k\times[-\theta^{2j}/2,-\theta^{2(j+1)}/2]$ can be 
different by a constant multiple of $\|T_j-T\|$ which is 
bounded as in \eqref{las8}. The H\"{o}lder semi-norm $[f]_{1+\zeta}$ has two terms, 
$[\nabla f]_\zeta$ and the $(1+\zeta)/2$-H\"{o}lder semi-norm in time. The first is 
seen as the variation of the tangent space and one can see that it is bounded by a multiple of constant (which is close to $1$) under the small rotation. The estimate for the latter is obtained by 
applying \cite[Proposition 6.4]{Kasai-Tone} with the gradient H\"{o}lder norm, and 
the small rotation affects little. 
Hence we can obtain the desired 
$C^{1,\zeta}$ estimate for $f_j$ representing 
${\rm spt}\|V_t\|$ over the domain
$B_{\theta^j}^k\times[-\theta^{2j}/2,-\theta^{2(j+1)}/2]$, by $2\Cr{c4}\theta^{j\zeta}\max\{\mu,\Cr{c8}\|u\|\}$.
We next observe that
\begin{equation}
   \tilde D=\{(x,t)\in\mathbb R^k\times[-1/2,0)\,:\,|x|^2<|t|\}\subset
   \cup_{j=0}^\infty B_{\theta^j}^k\times[-\theta^{2j}/2,
   -\theta^{2(j+1)}/2]\,,
\end{equation}
so that we have a representation of $\spt\|V_t\|$ as the graph of a single function $f$ over $\tilde D$. The estimate $\|f\|_0+\|\nabla f\|_0 \leq 2 \Cr{c4} \max\{\mu, \Cr{c8} \|u\|\}$ is immediate. For the H\"older semi-norm $[f]_{1+\zeta}$, we proceed as follows. Let $(y_1,s_1)$, $(y_2,s_2)$ be points in $\tilde D$ with $(y_1,s_1) \neq (y_2,s_2)$, assume without loss of generality that $s_1 \leq s_2$, and let $h,l \geq 0$ be such that $(y_1,s_1) \in B^k_{\theta^h} \times [ -\theta^{2h}/2,-\theta^{2(h+1)}/2]$ and $(y_2,s_2) \in B^k_{\theta^{h+l}} \times [ -\theta^{2(h+l)}/2,-\theta^{2(h+l+1)}/2]$. By the triangle inequality, we estimate
\[
\begin{split}
\left| \nabla f(y_1,s_1) - \nabla f(y_2,s_2) \right| &\leq 2 \Cr{c4} \max\{\mu,\Cr{c8}\|u\|\} \left( |y_1-y_2|^\zeta + \frac12 \sum_{j=h}^{h+l} (\theta^{2j})^{\zeta/2}  \right)\\
&\leq  \Cr{c4} \Cr{c-fin} \max\{\mu,\Cr{c8}\|u\|\} \theta^{h\zeta} \\
&\leq  \Cr{c4} \Cr{c-fin} \max\{\mu,\Cr{c8}\|u\|\} |s_1-s_2|^{\zeta/2}\,,
\end{split}
\]
where $\Cr{c-fin}=\Cr{c-fin}(k,p,q)$. The estimate for the second summand in $[f]_{1+\zeta}$ is analogous. The proof is now complete.
\end{proof}

\section{Proof of the main results}
\label{PMR}
We are now ready to prove Theorem \ref{main2} and Theorem \ref{main3}.

\begin{proof}[Proof of Theorem \ref{main2}]

By scaling, we may assume $R=1$. Given $\nu \in \left(0,1\right)$, $E_1 \in \left[1, \infty \right)$, $p$ and $q$, let $\Cr{e7}$, $\Cr{c4}$, $\Cr{c-fin}$ and $\Cr{c8}$ be as in Proposition \ref{p:las}. Let now $\Cr{e1} \in \left(0,1\right)$ and $c_1 \in \left( 1, \infty \right)$ be such that the following conditions are satisfied:
\begin{align}
    \Cr{e1} \leq \frac{\Cr{e7}}{2^{k+4}}\,, \qquad c_1 \geq 4\,\max\{2^{k+4}\Cr{c4}\Cr{c-fin},\Cr{c4}\Cr{c-fin}\Cr{c8}\}\,. \label{e:eps_cond}
\end{align}

For $T \in \mathbf{G}(n,k)$, and $U = \rC(T,2)$, suppose that $\{V_t\}_{t \in \left[-1,0\right]}$ and $\{u(\cdot,t)\}_{t \in \left[-1,0\right]}$ satisfy (A1)-(A4) as well as \eqref{udef1}-\eqref{udef4}. We identify, as usual, $T$ with $\R^k \cong \R^k \times \{0\} \subset \R^n$, and we claim the following: for every $j \geq 1$, setting
\begin{align} \label{e:parameters}
& \sigma_j := \sum_{i=1}^j \frac{1}{i}\,, \qquad \tau_1 := \frac{1}{2} \,, \qquad \tau_{j+1} := \frac{1}{4\sigma_j} \,, \\ \label{e:increasing_parabolas}
  &  D_j := \left\lbrace (x,t) \in \R^k \times \left[ -\tau_j,0\right) \, \colon \, |x|^2 < \sigma_j |t| \right\rbrace\,, 
\end{align}
there exist $f_j \colon D_j \to T^\perp$ and $F_j \colon D_j \to \R^n$ such that $F_j(x,t)=(x,f_j(x,t))$, and
\begin{enumerate} 
   \item $\spt \|V_t\| \cap \rC(T,\sqrt{\sigma_j|t|}) = \mathrm{Image}\, F_j(\cdot,t) \mbox{ for all } t \in \left[-\tau_j,0\right)$,
   \item $\|f_j\|_0 + \|\nabla f_j\|_0 \leq c_1 \max\{\mu,\|u\|_{p,q}\}$. 
\end{enumerate}
Assume the claim for the moment. It is then an immediate consequence of \eqref{e:parameters} that
\[
\sqrt{\sigma_j |t|} \geq 1/2 \mbox{ for all $t \in \left[-\tau_j, -\tau_{j+1} \right)$}\,,
\]
which implies that
\begin{equation} \label{e:good-cylinder}
    B^k_{\frac12} \times \left[-\tau_j, -\tau_{j+1} \right) \subset D_j\,.
\end{equation}
Since $\lim_{j \to \infty} \tau_j = 0$, \eqref{e:good-cylinder} and (1)-(2) imply that one can define a function $f \colon B^k_{\frac12} \times \left[-\frac14,0\right) \to T^\perp$ such that, setting $F(x,t)=(x,f(x,t))$ for $(x,t) \in B^k_{\frac12} \times \left[-\frac14,0\right)$ one has 
\begin{align*}
    & \spt\|V_t\| \cap \rC(T,1/2) = \mathrm{image}\,F(\cdot, t) \mbox{ for all $t\in \left[-1/4,0\right)$}\,, \\
    & \|f\|_0 + \| \nabla f \|_0 \leq c_1 \max\{\mu,\|u\|_{p,q}\}\,.
\end{align*}
that is \eqref{udef5-12} and part of the estimate in \eqref{udef5}. In what follows, we will first prove the claim; then, we will show that the resulting function $f$ also satisfies $[f]_{1+\zeta} \leq c_1 \max\{\mu,\|u\|_{p,q}\}$.

The proof of the claim is by induction on $j\geq 1$. The induction base, $j=1$, is Proposition \ref{p:las}. We then assume that the claim is true for $j$, and prove it for $j+1$. Fix any point $(x_0,t_0) \in \partial D_j$, and translate in space-time so to consider the flow $\{\tilde V_s\}_{s\in\left[-1-t_0,0\right]}$, with $\tilde V_s := (\tau_{x_0})_\sharp V_{s+t_0}$ where $\tau_{x_0}(y) := y-x_0$. Set $\tilde R^2 = \tilde R_{t_0}^2 := \frac{1}{4}+\frac{t_0}{4}$, and notice that $\rC(T,x_0,2 \tilde R) \subset \rC(T,0,2)$. In particular, $\{\tilde V_s\}$ satisfies (A1)-(A4) in $U=\rC(T,2\tilde R)$ corresponding to the forcing term $\tilde u(y,s) = \tilde u_{(x_0,t_0)}(y,s) := u(y+x_0,s+t_0)$. We next claim that \eqref{las1}-\eqref{las4} are satisfied. We clearly have
\[
\mu_{(x_0,t_0)}^2 := \tilde R^{-k-4} \int_{-\tilde R^2}^0 \int_{\rC(T,2\tilde R)} |T^\perp(y)|^2\,d\|\tilde V_s\|(y)\,ds \leq \tilde R^{-k-4} \mu^2 \leq 4^{k+4}\mu^2 < \Cr{e7}^2
\]
by \eqref{udef1} and \eqref{e:eps_cond}. Moreover, $(T^{-1}(0)\times\{0\}) \cap \spt(\|\tilde V_s\|\times ds) = (T^{-1}(x_0)\times\{t_0\}) \cap \spt(\| V_t\|\times dt) \neq \emptyset$, because for any sequence $(x_h,t_0) \in D_j$ such that $x_h \to x_0$ we have $(x_h,f_j(x_h,t_0)) \in T^{-1}(x_h) \cap \spt\|V_{t_0\|}$ by (1), and thus $(T^{-1}(x_0) \times \{t_0\}) \cap \spt(\|V_t\|\times dt)$ contains all subsequential limits of $(x_h, f_j (x_h,t_0),t_0)$. We also readily estimate 
\[\|\tilde u\|_{L^{p,q}(\rC(T,2\tilde R) \times (-\tilde R^2,0))} \leq \|u\|_{L^{p,q}(\rC(T,2) \times (-1,0))}\,,
\]
so that \eqref{udef4} implies \eqref{las2}. Finally, we have
\[
\tilde R^{-k} \|\tilde V_{-4\tilde R^2/5}\|(\phi_{T,\tilde R}^2) = \tilde R^{-k} \|V_{-1/5+4t_0/5}\|(\phi^2_{T,\tilde R,x_0}) \leq \bc+\delta\,,
\]
using the same argument leading to \eqref{ab3}. We can then apply Proposition \ref{p:las} and conclude after translating back the origin to $(x_0,t_0)$ that, setting
\[
\tilde D^{(x_0,t_0)} := \left\lbrace (x,t) \in \R^k \times \left[ t_0 - \frac{\tilde R_{t_0}^2}{2}, t_0 \right) \, \colon \, |x-x_0|^2 < |t-t_0| \right\rbrace\,,
\]
there exist functions $f^{(x_0,t_0)} \colon \tilde D^{(x_0,t_0)} \to T^\perp$ and $F^{(x_0,t_0)} \colon \tilde D^{(x_0,t_0)} \to \R^n$ such that $F^{(x_0,t_0)}(x,t)\\=(x,f^{(x_0,t_0)}(x,t))$ for all $(x,t) \in \tilde D^{(x_0,t_0)}$ and
\begin{itemize}
    \item[$(1)_\star$] $\spt\|V_t\| \cap \rC(T,x_0,\sqrt{|t-t_0|}) = \mathrm{Image}\,F^{(x_0,t_0)}(\cdot,t)$ for all $t \in \left[ t_0 - \frac{\tilde R_{t_0}^2}{2}, t_0 \right)$,
    \item[$(2)_\star$] $\tilde R_{t_0}^{-1} \|f^{(x_0,t_0)}\|_0 + \|\nabla f^{(x_0,t_0)}\|_0 + \tilde R_{t_0}^\zeta [f^{(x_0,t_0)}]_{1+\zeta} \leq \Cr{c4}\Cr{c-fin} \max\{\mu_{(x_0,t_0)},c_8 \|\tilde u_{(x_0,t_0)}\|\}$\,.
\end{itemize}
In particular, there is a well posed extension of the functions $f_j$ and $F_{j}$ to the region
\[
D_j \cup \bigcup_{(x_0,t_0) \in \partial D_j} \tilde D^{(x_0,t_0)}\,.
\]
We let $f_{j+1}$ and $F_{j+1}$ denote such extensions, and we proceed with the proof that conditions (1)-(2) hold true with $j+1$ in place of $j$. To this aim, it is sufficient to show the following: for $t \in \left[-\tau_{j+1},0\right)$ and $\sigma_j |t| \leq |x|^2 < \sigma_{j+1} |t|$, there exists $(x_0,t_0) \in \partial D_j$ such that $(x,t) \in \tilde D^{(x_0,t_0)}$. Once this is established, indeed, one immediately gains that 
\begin{equation} \label{e:opening_parabola}
    D_{j+1} \subset D_j \cup \bigcup_{(x_0,t_0) \in \partial D_j} \tilde D^{(x_0,t_0)}\,,
\end{equation}
see Figure \ref{fig:env_par}, and (1) at step $j+1$ follows immediately from (1) at step $j$ and $(1)_\star$, while (2) at step $j+1$ follows from (2) at step $j$ and $(2)_\star$ thanks to \eqref{e:eps_cond} 

\begin{figure}
\begin{tikzpicture}
\draw[-] (-4,0) -- (4,0) node[right] {$T$};
\draw[->] (0,-4) -- (0,0.7) node[above] {$t$};
\draw[scale=6, thick, domain=-0.707:0.707, smooth, variable = \x] plot ({\x},{-\x*\x});
\draw[thick] (-4.242,-3) -- (4.242,-3);
\node[] at (3,-2.4) {$D_1$};
\draw[scale=6,domain=-0.39:0.79, smooth, variable = \x] plot ({\x},{-0.04-(\x-0.2)*(\x-0.2)});
\filldraw[black] (1.2,-0.24) circle(1pt);
\draw[scale=6,domain=-0.79:0.39, smooth, variable = \x] plot ({\x},{-0.04-(\x+0.2)*(\x+0.2)});
\filldraw[black] (-1.2,-0.24) circle(1pt);
 \draw[scale=6,domain=-0.25:0.45, smooth, variable = \x] plot ({\x},{-0.01-(\x-0.1)*(\x-0.1)});
 \filldraw[black] (0.6,-0.06) circle(1pt);
\draw[scale=6,domain=-0.45:0.25, smooth, variable = \x] plot ({\x},{-0.01-(\x+0.1)*(\x+0.1)});
 \filldraw[black] (-0.6,-0.06) circle(1pt);
\draw[scale=6,very thick, domain=-0.5:0.5, smooth, variable = \x] plot ({\x},{-0.67*\x*\x});
\draw[very thick] (-3,-1) -- (3,-1);
\node[] at (2.5,-0.85) {\tiny{$D_2$}};
\end{tikzpicture}
\caption{An illustration of the first two parabolic regions $D_j$: the region $D_2$ is a subset of the union of $D_1$ with suitable parabolic regions $\tilde D^{(x_0,t_0)}$ having vertices at points $(x_0,t_0) \in \partial D_1$ (black dots in the graph). The region $D_3$ will be a subset of the union of $D_2$ with parabolic regions $\tilde D^{(x_0,t_0)}$ having vertices at points $(x_0,t_0) \in \partial D_2$. As $j$ grows, the opening of the regions $D_j$ increases, as it is defined by the parameter $\sigma_j \uparrow \infty$. The union of the regions $D_j$ contains the cylinder $B_{1/2}^k \times \left[-1/4,0\right)$, over which we can conclude graphical parametrization and corresponding estimates for the flow.} \label{fig:env_par}
\end{figure}
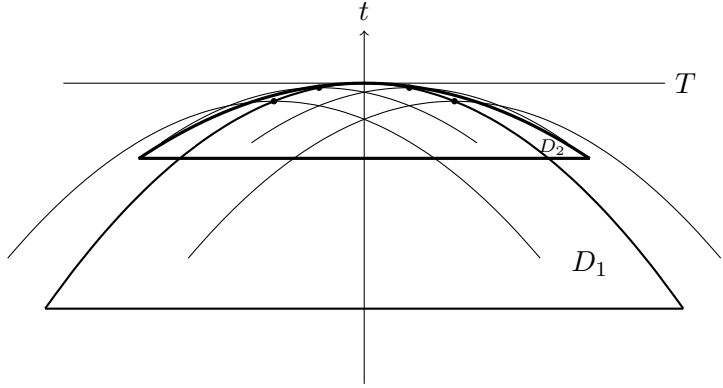

To prove the above claim, let then $(x,t) \in \R^{k} \times \left[ - \tau_{j+1},0\right)$ be such that $\sigma_j |t| \leq |x|^2 < \sigma_{j+1}|t|$, and set
\begin{equation} \label{t0x0}
    t_0 := \frac{t}{\alpha}\,, \qquad x_0 := \sqrt{\frac{\sigma_j |t|}{\alpha}} \frac{x}{|x|}
\end{equation}
for some number $\alpha=\alpha_j > 1$ to be determined. Notice that $(x_0,t_0) \in \partial D_j$ by construction. We then only need to prove that there exists $\alpha > 1$ such that $(x,t) \in \tilde D^{(x_0,t_0)}$. On the other hand, by the definitions of $t_0$ and $x_0$ it holds that
\begin{align*}
& |x-x_0| = |x| - \sqrt{\frac{\sigma_j |t|}{\alpha}} < \sqrt{|t|} \left( \sqrt{\sigma_{j+1}} - \sqrt{\frac{\sigma_j}{\alpha}} \right) \\ 
& \sqrt{|t-t_0|} = \sqrt{|t|} \sqrt{1-\frac{1}{\alpha}}\,, 
\end{align*}
so that, recalling the definition of $\sigma_j$, $(x_0,t_0) \in \tilde D^{(x_0,t_0)}$ provided $\alpha > 1$ is chosen so that
\begin{equation} \label{e:key-ineq}
    \sqrt{\sigma_j + \frac{1}{j+1}} - \sqrt{\frac{\sigma_j}{\alpha}} \leq \sqrt{1-\frac{1}{\alpha}}\,.
\end{equation}
We now show that \eqref{e:key-ineq} has a solution $\alpha=\alpha_j>1$ for every $j$. Direct calculation shows that $\alpha=2$ is a solution to \eqref{e:key-ineq} when $j=1$ and $j=2$. On the other hand, it holds
\[
 \sqrt{\sigma_j + \frac{1}{j+1}} - \sqrt{\frac{\sigma_j}{\alpha}} = \frac{\sigma_j \left(1-\frac{1}{\alpha}\right) + \frac{1}{j+1}}{\sqrt{\sigma_j + \frac{1}{j+1}} + \sqrt{\frac{\sigma_j}{\alpha}}} \leq \frac{\sigma_j \left(1-\frac{1}{\alpha}\right) + \frac{1}{j+1}}{\sqrt{\sigma_j}}\,,
\]
so that solutions to
\begin{equation} \label{e:key-ineq2}
    \sigma_j \left(1-\frac{1}{\alpha}\right) + \frac{1}{j+1} \leq \sqrt{\sigma_j \left(1-\frac{1}{\alpha}\right)}
\end{equation}
also solve \eqref{e:key-ineq}. Changing variable 
\[
\xi := \sqrt{\sigma_j \left(1-\frac{1}{\alpha}\right)}\,,
\]
\eqref{e:key-ineq2} reduces to 
\[
\xi^2 - \xi + \frac{1}{j+1} \leq 0\,,
\]
which admits $\xi = \frac12$ as a solution for every $j \geq 3$. Going back to the original variables, we have that the number $\alpha=\alpha_j > 1$ such that $\frac{1}{\alpha} = 1-\frac{1}{4\sigma_j}$ is a solution to \eqref{e:key-ineq} for $j \geq 3$. This concludes the proof of \eqref{e:opening_parabola}.

We are only left with the proof of the estimate on the H\"older semi-norm $[f]_{1+\zeta}$. Given that $\spt\|V_t\| \cap \rC(T,1/2)$ is the graph of a function defined on $B^k_{1/2}$ for all $t \in [-1/4,0)$, we know now that for every $(x_0,t_0) \in B^k_{1/2} \times [-1/4,0)$
the flow $\{\tilde V_s\}_{s \in [-1-t_0,0]}$
 with $\tilde V_s = (\tau_{x_0})_\sharp V_{s+t_0}$ as above satisfies the assumptions of Proposition \ref{p:las} with, say $R=3/4$. In particular, we have $C^{1,\zeta}$ estimates for $f$ with $\Cr{c4} \Cr{c-fin} \max\{\mu,\Cr{c8}\|u\|_{p,q}\}$ in the parabolic region $\tilde D^{(x_0,t_0)}=\{(x,t) \in \R^k \times [t_0 - 1/4,t_0) \, \colon \, |x-x_0|^2 < |t-t_0|\}$. To prove the desired H\"older estimate, let now $(y_1,s_1)$ and $(y_2,s_2)$ be points in $B^k_{1/2} \times [-1/4,0)$ with $(y_1,s_1) \neq (y_2,s_2)$, and assume without loss of generality that $s_1 \leq s_2$. Consider the parabolic region $\tilde D^{(y_2,s_2)}$ with vertex at $(y_2,s_2)$. If $|y_1-y_2|^2 < |s_1-s_2|$, then $(y_1,s_1) \in \tilde D^{(y_2,s_2)}$, and the estimate is a consequence of Proposition \ref{p:las} and \eqref{e:eps_cond}. Otherwise, if $|y_1-y_2|^2 \geq |s_1-s_2| = s_2-s_1$, we use the triangle inequality to estimate
\[
\begin{split}
    |\nabla f (y_1,s_1) - \nabla f (y_2,s_2)| &\leq  |\nabla f (y_2,s_2) - \nabla f (y_2,s_2-|y_1-y_2|^2)| \\
   & \qquad + |\nabla f (y_2,s_2-|y_1-y_2|^2) - \nabla f (y_1,s_2-|y_1-y_2|^2)| \\
   & \qquad + |\nabla f (y_1,s_2-|y_1-y_2|^2) - \nabla f(y_1,s_1)| \\
   &\leq \Cr{c4}\Cr{c-fin} \max\{\mu,\Cr{c8}\|u\|_{p,q}\} \left( |y_1-y_2|^\zeta + |y_1-y_2|^\zeta + 2^{\zeta/2} |y_1-y_2|^\zeta \right)\,,
\end{split}
\]
which yields the estimate for $[\nabla f]_\zeta$ thanks to \eqref{e:eps_cond}. The estimate for the second summand in $[f]_{1+\zeta}$ is analogous, and we omit it. The proof is complete.
\end{proof}

\begin{proof}[Proof of Theorem \ref{main3}]
Here we briefly record the outline of 
the $C^{2,\alpha}$ regularity of \cite{Ton-2}
and point out the key estimates. The idea
is to look at a graphical distance function from the
solution of the heat equation $g$, denoted by
$Q_g$ (\cite[Definition 4.1]{Ton-2}) and one
shows a decay estimate of the $L^2$-norm of 
$Q_g$ by the blowup argument. The key identity
is Lemma 4.2, which shows certain ``sub-caloric'' 
property of $Q_g$, and the resulting $L^\infty$
estimate Proposition 4.3, both of \cite{Ton-2}.
Note that the latter is an estimate up to the 
end-time. Since this is the basis of the blowup
argument, if we have already $C^{1,\zeta}$ graph 
representation up to the end-time, all the 
following argument in \cite{Ton-2} 
works verbatim with 
obvious modifications of changing the domain of 
integration to the one with center at the end-time
point from the center of the space-time domain. 
The second order Taylor expansion of the blow-up
should be changed to the end-time point as well.
The end result is the estimate away from the 
parabolic boundary, as stated in the claim.
\end{proof}

 \appendix

\section{Gaussian density lower bound}
\label{a:densities and tangent flows}
We include the following Lemma for the reader's 
convenience. The localized version can 
be proved similarly. 
\begin{lemma}
Suppose that $\mathscr{V}=\{V_t\}_{t\in(a,b]}$ is a Brakke
flow as in Definition \ref{brakke-def} and that ${\rm spt}\|V_t\|
\subset B_R$ for every $t\in(a,b]$ for some $R>0$.
Then for any $(x_0,t_0)\in{\rm spt}(\|V_t\|
\times dt)$, we have $\Theta(\mathscr{V},(x_0,t_0))\geq 1$.
\end{lemma}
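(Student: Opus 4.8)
The plan is to argue by contradiction, combining Huisken's monotonicity formula, the clearing-out lemma, and a tangent-flow (blow-up) argument. Suppose that $\vartheta := \Theta(\mathscr{V},(x_0,t_0)) < 1$. Recalling the definition \eqref{def:density} of the Gaussian density, Huisken's monotonicity formula guarantees that the Gaussian density ratio
\[
\tau \longmapsto \rho(\tau) := \frac{1}{(4\pi\tau)^{k/2}}\int_{\R^n} \exp\!\left(-\frac{|y-x_0|^2}{4\tau}\right)\,d\|V_{t_0-\tau}\|(y)\,, \qquad \tau \in (0,t_0-a)\,,
\]
is monotone non-decreasing, with $\rho(\tau)\downarrow\vartheta$ as $\tau\to 0^+$; moreover it satisfies the sharp monotonicity identity, the error term of which (a weighted integral of $|h(V_t,\cdot)+\tfrac{(x-x_0)^\perp}{2(t_0-t)}|^2$) will be used below to characterise the equality case.

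First I rule out $\vartheta = 0$. Since $\spt\|V_t\|\subset B_R$ and $\sup_t\|V_t\|(B_R)<\infty$, Huisken's monotonicity provides a local area-ratio bound $\|V_t\|(B_r(x))\le E_1\,\omega_k\,r^k$ with $E_1$ depending only on $n,k,R,\sup_t\|V_t\|(B_R)$, so the clearing-out lemma applies with some threshold $\eta_1=\eta_1(E_1)\in(0,1)$. If $\vartheta=0$, then $\rho(\tau)\to 0$, so for $\tau$ small enough $\|V_{t_0-\tau}\|(B_{\sqrt\tau}(x_0))\le (4\pi)^{k/2}e^{1/4}\,\rho(\tau)\,\tau^{k/2} < \eta_1\,\omega_k\,(\sqrt\tau)^k$. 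The clearing-out lemma (e.g. \cite[Corollary 6.3]{Kasai-Tone}) then forces $\|V_t\|(B_{\sqrt\tau/2}(x_0)) = 0$ for all $t\in[t_0-\tfrac\tau2,b]$, whence $(\|V_t\|\times dt)\big(B_{\sqrt\tau/2}(x_0)\times(t_0-\tfrac\tau2,b]\big)=0$, contradicting $(x_0,t_0)\in\spt(\|V_t\|\times dt)$. Hence $\vartheta\in(0,1)$.

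Next I pass to a tangent flow at $(x_0,t_0)$: for $\lambda_j\downarrow 0$ set $V^{(j)}_s := (\iota_{\lambda_j})_\sharp(\tau_{x_0})_\sharp V_{t_0+\lambda_j^2 s}$ for $s<0$, where $\iota_\lambda(x)=\lambda^{-1}x$. The monotonicity formula gives uniform Gaussian-density bounds, hence (again with the lower-density/area-ratio estimates) uniform local area-ratio bounds, so the compactness theorem for Brakke flows yields a subsequential limit Brakke flow $\mathscr{V}' = \{V'_s\}_{s<0}$ in $\R^n$. Since $\rho^{(j)}(\tau)=\rho(\lambda_j^2\tau)\to\vartheta$ for each fixed $\tau>0$, the Gaussian density ratio of $\mathscr{V}'$ centred at the space-time origin is constant $\equiv\vartheta$; by the equality case of the monotonicity formula its error term vanishes, so $\mathscr{V}'$ is self-shrinking, i.e. $h(V'_s,y) = -\tfrac{S^\perp(y)}{2(-s)}$ for $V'_s$-a.e. $(y,S)$ and a.e. $s<0$, and each $\|V'_s\|$ is the parabolic dilation of $\|V'_{-1}\|$. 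A standard consequence of the monotonicity formula and this self-similar structure is that $(y,s)\mapsto\Theta(\mathscr{V}',(y,s))$ is maximised at $(0,0)$, so $\Theta(\mathscr{V}',(y,s))\le\vartheta$ for every $(y,s)$ in the space-time support of $\mathscr{V}'$ (see, e.g., \cite{White_stratification}).

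Finally, $\vartheta>0$ forces $\mathscr{V}'$ to be non-trivial, so $\|V'_{-1}\|\ne 0$. Brakke's compactness theorem preserves integrality, so for a.e. $s_*<0$ the slice $V'_{s_*}$ is an integral varifold with $L^2$ generalized mean curvature, and in addition $s_*$ may be taken among the (full-measure set of) times at which the mass of the flow is continuous. At any $y_*\in\spt\|V'_{s_*}\|$ one then has $\Theta^k(\|V'_{s_*}\|,y_*)\ge 1$ by integrality, and since $\Theta(\mathscr{V}',(y_*,s_*))\ge\Theta^k(\|V'_{s_*}\|,y_*)$ by the monotonicity formula (the usual lower bound for the Gaussian density of an integral Brakke flow at a point of a mass-continuity time slice; see \cite{White_stratification}), we conclude $\Theta(\mathscr{V}',(y_*,s_*))\ge 1$. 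This contradicts the bound $\Theta(\mathscr{V}',(y_*,s_*))\le\vartheta<1$ from the previous step, completing the proof. The main obstacle is exactly the bookkeeping of these last two steps: invoking the equality case of Huisken's monotonicity to identify $\mathscr{V}'$ as a self-shrinker, the maximality of the Gaussian density at the shrinking point, and the lower-density inequality $\Theta(\mathscr{V}',(y_*,s_*))\ge\Theta^k(\|V'_{s_*}\|,y_*)\ge 1$ — all classical, but each requiring a careful citation and the right choice of the time $s_*$.
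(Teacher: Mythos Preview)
Your argument is correct in outline but much more elaborate than necessary, and the step you defer as ``classical'' is in fact the entire content of the lemma. The paper's proof is a few lines: since $(x_0,t_0)\in\spt(\|V_t\|\times dt)$ and $V_t\in\mathbf{IV}_k$ for a.e.\ $t$, there are points $(x',t')$ arbitrarily close to $(x_0,t_0)$ at which $V_{t'}$ is integral and $x'$ carries an approximate tangent plane of integer multiplicity $j'\ge 1$; the Gaussian integral of $\|V_{t'}\|$ centred at $x'$ at scale $\tau$ then tends to $j'$ as $\tau\to 0$. Huisken monotonicity centred at $(x',t'+\tau)$ pushes this lower bound back to a \emph{fixed} earlier time $t_0-\tau_0$, and continuity of the backward heat kernel in the centring data gives a contradiction with $\Theta(\mathscr V,(x_0,t_0))<1$.

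Your route instead excludes $\vartheta=0$ via clearing-out, passes to a tangent flow $\mathscr V'$, uses the self-shrinker structure to bound $\Theta(\mathscr V',\cdot)\le\vartheta$ everywhere, and then invokes the inequality $\Theta(\mathscr V',(y_*,s_*))\ge\Theta^k(\|V'_{s_*}\|,y_*)\ge 1$ at a good slice. But that last inequality --- that the Gaussian density of the flow dominates the static density of an integral time slice --- is precisely what the paper proves directly (tangent plane $+$ monotonicity), and once you have it you can apply it to the original flow at a nearby $(x',t')$ and conclude by upper semicontinuity of $\Theta$, with no need for the tangent-flow layer, the self-shrinker identification, or the separate treatment of $\vartheta=0$. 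What you call ``bookkeeping'' in your final step is the lemma itself; everything before it is scaffolding that can be removed.
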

\begin{proof}
The proof is by a contradiction argument. 
If $\Theta(\mathscr{V},(x_0,t_0))<1$, by the definition of the
Gaussian density
and the continuity of the integrand,
there exist some $\tau_0>0$, $\delta_0>0$  and $\varepsilon_0>0$ such that
$|(x_0,t_0)-(x',t')|<\varepsilon_0$ implies
\begin{equation}\label{ap0}
    \frac{1}{(4\pi(t'-t_0+\tau_0))^{\frac{k}{2}}}
    \int_{\mathbb R^n} \exp\left(-\frac{|y-x'|^2}{4(t'-t_0+\tau_0)}\right)\,d\|V_{t_0-\tau_0}\|(y)<1-\delta_0.
\end{equation}
By the definition of Brakke flow, 
we can choose an arbitrarily close point $(x',t')$ 
to $(x_0,t_0)$ such that $V_{t'}\in
{\bf IV}_k(U)$ and $V_{t'}$ has, at $x'$, 
the approximate tangent space with 
integer-multiplicity, say, $j'\in\mathbb N$. Then,
by the property of the approximate tangent space, one can
prove that
\begin{equation}\label{ap1}
    \lim_{\tau\rightarrow 0+}
    \frac{1}{(4\pi\tau)^{\frac{k}{2}}}
    \int_{\mathbb R^n} \exp\left(-\frac{|y-x'|^2}{4\tau}\right)\,d\|V_{t'}\|(y)=j'.
\end{equation}
In particular, \eqref{ap1} implies that 
\begin{equation}\label{ap2}
    1-\frac{\delta_0}{2}\leq\frac{1}{(4\pi\tau)^{\frac{k}{2}}}
    \int_{\mathbb R^n} \exp\left(-\frac{|y-x'|^2}{4\tau}\right)\,d\|V_{t'}\|(y)
\end{equation}
for all sufficiently small $\tau>0$. Since $t'$ may be
arbitrarily close to $t_0$, we may assume that $t'>t_0-\tau_0$,
and by the monotonicity and \eqref{ap2}, we have
\begin{equation}\label{ap3}
    1-\frac{\delta_0}{2}\leq \frac{1}{(4\pi(t'+\tau-t_0+\tau_0))^{\frac{k}{2}}}
    \int_{\mathbb R^n}\exp\left(-\frac{|y-x'|^2}{4(t'+\tau-t_0+\tau_0)}\right)
    \,d\|V_{t_0-\tau_0}\|(y).
\end{equation}
Since $\tau$ is arbitrarily small, we may assume $|(x_0,t_0)-(x',t'+\tau)|<\varepsilon_0$, and \eqref{ap3} is a contradiction to \eqref{ap0}. This proves the claim.
\end{proof}

%
%
%

\bibliographystyle{abbrv}
\bibliography{MCF_Plateau_biblio}

\begin{thebibliography}{10}

\bibitem{Allard}
W.~K. Allard.
\newblock On the first variation of a varifold.
\newblock {\em Ann. of Math. (2)}, 95:417--491, (1972).

\bibitem{Brakke}
K.~A. Brakke.
\newblock {\em The motion of a surface by its mean curvature}, volume~20 of
  {\em Mathematical Notes}.
\newblock Princeton University Press, Princeton, N.J., 1978.

\bibitem{CGG}
Y.~G. Chen, Y.~Giga, and S.~Goto.
\newblock Uniqueness and existence of viscosity solutions of generalized mean
  curvature flow equations.
\newblock {\em J. Differential Geom.}, 33(3):749--786, (1991).

\bibitem{DGS}
G.~De~Philippis, C.~Gasparetto, and F.~Schulze.
\newblock A short proof of {A}llard's and {B}rakke's regularity theorems.
\newblock {\em Int. Math. Res. Not. IMRN}, Published online, (2023).

\bibitem{ES}
L.~C. Evans and J.~Spruck.
\newblock Motion of level sets by mean curvature. {I}.
\newblock {\em J. Differential Geom.}, 33(3):635--681, (1991).

\bibitem{Federer_book}
H.~Federer.
\newblock {\em Geometric measure theory}.
\newblock Die Grundlehren der mathematischen Wissenschaften, Band 153.
  Springer-Verlag New York Inc., New York, 1969.

\bibitem{gas}
C.~Gasparetto.
\newblock Epsilon-regularity for the {B}rakke flow with boundary.
\newblock {\em Anal. PDE}, (to appear).

\bibitem{Ilm_AC}
T.~Ilmanen.
\newblock Convergence of the {A}llen-{C}ahn equation to {B}rakke's motion by
  mean curvature.
\newblock {\em J. Differential Geom.}, 38(2):417--461, (1993).

\bibitem{Ilm1}
T.~Ilmanen.
\newblock Elliptic regularization and partial regularity for motion by mean
  curvature.
\newblock {\em Mem. Amer. Math. Soc.}, 108(520):x+90, (1994).

\bibitem{Itoh}
T.~Itoh.
\newblock {The Besicovitch covering theorem for parabolic balls in Euclidean
  space}.
\newblock {\em Hiroshima Mathematical Journal}, 48(3):279 -- 289, (2018).

\bibitem{Kasai-Tone}
K.~Kasai and Y.~Tonegawa.
\newblock A general regularity theory for weak mean curvature flow.
\newblock {\em Calc. Var. Partial Differential Equations}, 50(1-2):1--68,
  (2014).

\bibitem{KimTone}
L.~Kim and Y.~Tonegawa.
\newblock On the mean curvature flow of grain boundaries.
\newblock {\em Ann. Inst. Fourier (Grenoble)}, 67(1):43--142, (2017).

\bibitem{Lahiri}
A.~Lahiri.
\newblock {A new version of Brakke's local regularity theorem}.
\newblock {\em Preprint arXiv.1601.06710}, (2016).

\bibitem{Luckhaus}
S.~Luckhaus and T.~Sturzenhecker.
\newblock Implicit time discretization for the mean curvature flow equation.
\newblock {\em Calc. Var. Partial Differential Equations}, 3(2):253--271,
  (1995).

\bibitem{Simon}
L.~Simon.
\newblock {\em Lectures on geometric measure theory}, volume~3 of {\em
  Proceedings of the Centre for Mathematical Analysis, Australian National
  University}.
\newblock Australian National University, Centre for Mathematical Analysis,
  Canberra, 1983.

\bibitem{ST19}
S.~Stuvard and Y.~Tonegawa.
\newblock An existence theorem for {B}rakke flow with fixed boundary
  conditions.
\newblock {\em Calc. Var. Partial Differential Equations}, 60(1):Paper No. 43,
  53, (2021).

\bibitem{ST22}
S.~Stuvard and Y.~Tonegawa.
\newblock {On the existence of canonical multi-phase Brakke flows}.
\newblock {\em Adv. Calc. Var.}, 17(1):33--78, (2024).

\bibitem{Takasao-Tonegawa}
K.~Takasao and Y.~Tonegawa.
\newblock Existence and regularity of mean curvature flow with transport term
  in higher dimensions.
\newblock {\em Math. Ann.}, 364(3-4):857--935, (2016).

\bibitem{Ton3}
Y.~Tonegawa.
\newblock Integrality of varifolds in the singular limit of reaction-diffusion
  equations.
\newblock {\em Hiroshima Math. J.}, 33(3):323--341, (2003).

\bibitem{Ton-2}
Y.~Tonegawa.
\newblock A second derivative {H}\"{o}lder estimate for weak mean curvature
  flow.
\newblock {\em Adv. Calc. Var.}, 7(1):91--138, (2014).

\bibitem{Ton1}
Y.~Tonegawa.
\newblock {\em {B}rakke's mean curvature flow: An introduction}.
\newblock SpringerBriefs in Mathematics. Springer, Singapore, 2019.

\bibitem{White_stratification}
B.~White.
\newblock Stratification of minimal surfaces, mean curvature flows, and
  harmonic maps.
\newblock {\em J. Reine Angew. Math.}, 488:1--35, (1997).

\bibitem{Wh2}
B.~White.
\newblock A local regularity theorem for mean curvature flow.
\newblock {\em Ann. of Math. (2)}, 161(3):1487--1519, (2005).

\end{thebibliography}
\end{document}